\newcommand{\bp}{\bm{p}}
\newcommand{\V}{\mathbb{V}}
\newcommand{\TNORM}  [2]{\left|\nlen\left|\nlen\left|#1\right|\nlen\right|\nlen\right|_{{}_{#2}}}
\newcommand{\NORM}   [2]{\left|\nlen\left|#1\right|\nlen\right|_{#2}}
\newcommand{\nlen}{\hspace{-0.2mm}}
\newcommand{\TERM}[2]{\textsf{#1}_{#2}}
\newcommand{\bn}{\bm{n}}
\newcommand{\bfe}{\boldsymbol{f}}
\newtheorem{problem}{Problem}[section]
\newtheorem{theorem}{Theorem}[section]
\newtheorem{lemma}[theorem]{Lemma}
\newtheorem{proposition}[theorem]{Proposition}
\newtheorem{corollary}[theorem]{Corollary}
\newtheorem{remark}[theorem]{Remark}
\newcommand{\ilario}[1]{\textcolor{black}{#1}}
\newcommand{\toCheck}[1]{{\color{black} #1}}
\newcommand{\as}[1]{\textcolor{black}{#1}}
\newcommand{\fd}[1]{\textcolor{black}{#1}}
\begin{document}

\title{A Virtual Element Method for the wave equation\\ on curved edges in two dimensions}

\author[$\diamond$]{Franco~Dassi}
\author[$\star$]{Alessio~Fumagalli}
\author[$\star,\ddagger$]{Ilario~Mazzieri}
\author[$\star$]{Anna~Scotti}
\author[$\diamond$]{Giuseppe~Vacca}

\affil[$\diamond$]{Dipartimento di Matematica e Applicazioni, Universit\`a di Milano Bicocca, via R. Cozzi 55, 20125 Milano, Italy.}
\affil[$\star$]{MOX, Laboratory for Modeling and Scientific Computing, Dipartimento di Matematica, Politecnico di Milano, Piazza Leonardo da Vinci 32, I-20133 Milano, Italy.}
\affil[$\ddagger$]{Correspondig author.}

\affil[ ]{\texttt {franco.dassi@unimib.it, alessio.fumagalli@polimi.it, ilario.mazzieri@polimi.it, anna.scotti@polimi.it, giuseppe.vacca@unimib.it }}

\maketitle

\begin{abstract}
    In this work we present an extension of the Virtual Element Method with curved edges for the numerical approximation of the  second order wave equation in a bidimensional setting. Curved elements are used to describe the domain boundary, as well as internal interfaces corresponding to  the change of some mechanical parameters. As opposite to the classic and isoparametric Finite Element approaches, where the geometry of the domain is approximated respectively by piecewise straight lines and by higher order polynomial maps, in the proposed method the geometry is exactly represented, thus ensuring a highly accurate numerical solution. Indeed, if in the former approach the geometrical error might deteriorate the quality of the numerical solution, in the latter approach the curved interfaces/boundaries are approximated exactly guaranteeing the expected order of convergence for the numerical scheme. Theoretical results and numerical findings  confirm the validity of the proposed approach.
\end{abstract}

\noindent{\bf Mathematics Subject Classification }: 65M12,65M60.

\noindent{\bf Keywords }: Virtual element method, wave equation, curved elements, polygonal grids.

\section{Introduction}

In this paper we present an application of the  Virtual Element Method (VEM) with curved faces for the numerical solution of wave propagation problems.  Acoustics waves arise in many different scientific disciplines such as medical ultrasound, musical acoustics, vibro- and aero-acoustics, electromagnetics, geophysical exploration and seismology.  From a computational point of view these problems present
several challenges that reflect in the characteristics required by the
underlying numerical schemes such as \textit{geometrical flexibility},
\textit{high-accuracy} and \textit{scalability}.  \textit{Geometrical flexibility} is
important in order to have an optimal representation of the real geometry of the physical problem and \textit{high-accuracy} results without numerical artefacts, e.g., dispersion and dissipation errors, due to an improper model discretization. \textit{Scalable} and efficient algorithms are required to
solve realistic problems (involving typically milions of unknowns) and provide rapid feedback on the system status.

The scientific and technological progress involving the development of high-performance computing machines has made it possible to simulate, with increasing accuracy,
wave propagation phenomena for problems of a very complex nature. Nowadays, in computational acoustics, the most \as{widely} employed numerical techniques include the
Spectral Element (SE) \cite{Seriani1997,debasabe-sen,ZAMPIERI2006,Komatitsch2000}, the discontinuous Galerkin (dG) \cite{riviere2003discontinuous,GrScSc06,AntoniettiMazzieriMuhrNikolicWohlmuth_2019,terrana,Schwab2018}   and the
Finite Volumes (FV) \cite{dumbser-kaser2008,ghattas} schemes, typically built over unstructired grids composed by
tetrahedral/hexahedral elements in three
dimensions.   Although commercial software allows for the generation of
computational grids with complex domain geometry, this step can still represent a serious bottleneck for the entire simulation process. For this
reason, the use of general polygonal and polyhedral meshes is desirable.
Indeed, it is evident that with polytopal elements one can easily account for
small features in the model (such as cracks, holes and inclusions), \as{and handle in an automatic way} hanging nodes, movable meshes and adaptivity.

In the last decade, the development and analysis of numerical methods that
support computational meshes composed of polytopic elements have received a
lot of attention from the scientific community as testified by the \as{progress of the} Mimetic
Finite Difference (MFD) method \cite{BrLiSi2005,BrLiSh2006,BeLiMa2014,AntoniettiBeiraoBigoniVerani_2014,AnFoScVeNi2016} and the Virtual Element Method (VEM)
\cite{BeBrCaMaMaRu2012,BeiraoBrezziMariniRusso_2016,AnBeMoVe2014,AntoniettiManziniMazzieriMouradVerani_2019,BenedettoBerronePieracciniScialo_2014,AnBeScVe2016,BeiraoDassiRusso_2017,BeiraoBrezziDassiMarini_2018,BeiraoRussoVacca_2019,BeiraoMoraVacca_2019}.
 in the conforming setting or by the Discontinuous Galerkin (DG) methods
\cite{AntoniettiHoustonHuSartiVerani_2017,AntoniettiMazzieri_2018,AnHoPe2018,AnBoMa2020,Antonietti_et_al_2015,CangianiDongGeorgoulisHouston_2017},the Hybrid High-Order (HHO) method
\cite{DiPietroErn_2015,AghiliDiPietroRuffini_2017,BottiDiPietroSochala_2017,BottiDiPietro_2018,ChaveDiPietroFormaggia_2018,DiPietroKrell_2018}, the Gradient Schemes \cite{DrEyHe2016}
and the non-confroming VEM \cite{AnMaVe2017,AyLiMa2016,CaMaSu2017}, in the non-conforming setting.

With few exceptions, e.g., {\cite{BrLiSh2006,BottiDiPietro_2018,BeiraoRussoVacca_2019,BPP:2019,BBMR:2020,ABD:2020,BCDE:2021}}, those methods make use of polygonal and polyhedral
meshes with straight edges and faces that, especially for high-order methods,
can deteriorate the accuracy of the solution \as{in the case of curved boundaries or interfaces}. Indeed, as it is known from the
FEM literature, the approximation of the domain geometry with \as{planar} facets
introduces an error that can dominate the analysis.  A better description of
the domain of interest can be obtained through high-order polynomial maps and
isoparametric FEM, while the exact representation of computational (CAD) domains
is possible thanks to the Iso-Geometric Analysis (IGA). Indeed, in the latter,
the same spline maps are employed for the parametrization of the geometry and
the problem solution \cite{Huges2009}.
As it is shown by the seminal paper \cite{BeiraoRussoVacca_2019} and in \cite{dassi2020mixed} for the Darcy problem,
through the VEM technology it is possible to define discrete space also on curved elements in such a way that the domain geometry is defined exactly.
Indeed, by exploiting the peculiar construction of the VEM, one can avoid not only the approximation (even with polynomial functions) of the domain but also the positioning of the isoparametric nodes \cite{Lenoir1986}. Moreover, \as{only} the \as{local}  parametrization of the cells boundary is needed as opposite to the IGA where also the internal elemental volume has to be considered. On the other side, since the construction of the VEM space in directly made on the physical mesh elements,  the application of the VEM on curved geometry is computationally more expensive with respect to isoparametric FEM of IGA.

In this work, we apply the VEM with curved elements for the simulation of the acoustic wave propagation problem. The formulation is obtained starting from the seminal paper  \cite{BeiraoRussoVacca_2019}.  To the best of the authors' knowledge this is the first time that such an approach is applied to the second order wave equation.

The rest of the paper is organized as follows. Section~\ref{Sc:ModelPb} defines the model setup, the main assumptions on the curved domains and the VEM discretization.
\as{ In Section~\ref{Sc:TeorhericalAnalysis} we derive the theoretical analysis of the method.} 
In  Section~\ref{Sc:NumericalResults} we present some verification tests  assessing the accuracy of the method and some applications of the proposed method to realistic scenarios. 
Finally, we draw our conclusions in Section~\ref{Sc:Conclusions}.

\paragraph{Notation.} Given a domain $A\subset \mathbb{R}^2$, we consider the space $L^2(A)$ to be the
classical space of functions which are squared measurable $L^2(A)= \{v: \int_A
\vert v \vert^2 < +\infty \}$. Its associated scalar
product and induced norm are given by: $(\cdot, \cdot)_A: L^2(A) \times L^2(A)
\rightarrow \mathbb{R}$ and $\Vert \cdot \Vert_A : L^2(A) \rightarrow \mathbb{R}$
and defined as
$  (p, v)_A = \int_A p v$ and $\Vert p \Vert_A = \sqrt{(p, p)_A}$.
In the case of $L^2$-vector valued functions the extension is trivial and we
indicate with $[L^2(A)]^2$ such space.

We consider also the Sobolev space $H^1(A) = \{v\in L^2(A): \nabla v \in
[L^2(A)]^2\}$ with semi-norm $\vert \cdot \vert_{H^1(A)}: H^1(A) \times H^1(A)
\rightarrow \mathbb{R}$ and norm $\Vert \cdot \Vert_{H^1(A)}: H^1(A) \times H^1(A)
\rightarrow \mathbb{R}$ given by $ \vert p \vert_{H^1(A)} = \Vert \nabla p \Vert_A$ and $    \Vert p \Vert_{H^1(A)} = \sqrt{\Vert p \Vert_A^2 + \vert p \vert_{H^1(A)}^2}$.
We indicate with $H^1_\Upsilon(A)$ the subspace of $H^1(A)$ such that the
functions are null on $\Upsilon \subset \partial A$.

Since we are dealing with a time dependent problem, we will also consider the
following Bochner spaces. By considering a scalar $T>0$, an integer $1 \leq p <
\infty$, and a generic functional space $X$, we denote by $L^p((0, T]; X)$ as
the space of function $v:(0,T)\rightarrow X$  such that $v$ is measurable and
$\int_{0}^{T} \Vert v(t) \Vert_X^p  dt < +\infty$. The spaces $C^n((0, T]; X)$
with $0\leq n\leq \infty$ are defined in a similar way. The time derivative will
be indicated with a dot, i.e., 
we exploit the following notation $\dot{p} = \frac{\partial p}{\partial t}$.

\section{Model problem and its Virtual Element Discretization}\label{Sc:ModelPb}

Let $\Omega \subset \mathbb{R}^2$  be an open bounded domain with
regular boundary $\Gamma$ having outward pointing unit normal $\bn$, and set $T>0$. The
mathematical model of acoustic wave propagation can be formulated in the
following problem.
\begin{problem}[Wave problem - strong formulation]\label{modelpb}
    Find
    $p:\Omega \times (0,T] \rightarrow \mathbb{R}$ such that:
    \begin{gather*}
        \begin{aligned}
            &\rho \ddot{p} - \nabla \cdot (\mu \nabla p) = f && {\rm in} \, \Omega \times (0,T],\\
            &p = 0 && {\rm on} \, \Gamma_D \times (0,T],\\
            &\mu \nabla p \cdot \bn = 0  && {\rm on} \, \Gamma_N \times (0,T], \\
            &\mu \nabla p \cdot \bn +  \rho \dot{p}  = 0 && {\rm on} \, \Gamma_{A} \times (0,T], \\
            &(p, \dot{p}) = (p_0, p_1) && {\rm in} \, \Omega \times \{0\},
         \end{aligned}
     \end{gather*}
     where $\rho$ and $\mu$ are two positive \textcolor{black}{uniformly bounded} functions, representing
     the mass density and the viscosity of the medium, respectively.
\end{problem}
We assume the
boundary $\Gamma$ to be Lipschitz and to be decomposed into non-overlapping sufficiently smooth curves $\Gamma_D$,
$\Gamma_N$ and $\Gamma_A$ such that $\Gamma = \Gamma_D \cup \Gamma_N \cup \Gamma_A$. On $\Gamma_D$
(\textit{soft sound boundary}) the pressure field is set equal to zero, on
$\Gamma_N$ (\textit{sound hard boundary}) a rigid wall condition is imposed and on $\Gamma_A$ (\textit{absorbing boundary}) a non-reflecting
condition is considered.
%
%

To derive the weak formulation, we set $V = H^1_{\Gamma_D}(\Omega)$ and we introduce the following bilinear forms
\begin{gather}\label{def:bil}
    \begin{aligned}
        & m: V\times V \rightarrow \mathbb{R} &&
        m(p,\ilario{v}) = (\rho p, v)_\Omega \quad \forall u, v \in V\\
        & a: V\times V \rightarrow \mathbb{R} &&
        a(p,v) = (\mu \nabla p, \nabla v)_\Omega \quad \forall p, v \in V\\
        & c: V\times V \rightarrow \mathbb{R} &&
        c(p,v) = (\rho p, v)_{\Gamma_{A}} \quad \forall p, v \in V
    \end{aligned},
\end{gather}
and the linear functional $F : V \rightarrow \mathbb{R}$ as $F(v) = ( f,
v)_\Omega$ for any $v\in V$.
\begin{problem}[Wave problem - weak formulation]\label{eq:weak_form}
    The weak formulation of Problem \ref{modelpb} is:
    for any time $t\in (0,T]$ find $p = p(t) \in V$ such that
    \begin{gather*}
        \begin{aligned}
            &m (\ddot{p}, v) + c(\dot{p}, v) + a( p,v ) = F( v )  && \forall v \in H^1_{\Gamma_D}(\Omega),\\
            &(p(0),\dot{p}(0)) = ( p_0, p_1),
        \end{aligned}
    \end{gather*}
\end{problem}
\textcolor{black}{By using standard arguments, cf. \cite{DuvantLions,Gander2005}, it can be proved that if $(p_0, p_1) \in V \times L^2(\Omega)$ and $f \in L^2((0,T]; L^2(\Omega))$, then Problem \ref{eq:weak_form} admits a unique solution $p \in C^0 ((0, T]; V) \cap C^1 ((0, T]; L^2 (\Omega))$}.

\subsection{Virtual Element Discretization on curved edges}\label{SubSc:Vemdis}

In this part we present how to approximate, with the Virtual Element Method,
Problem \eqref{eq:weak_form}. The main difference from a classical VEM formulation is the presence of
curved interfaces. For this reason we consider the approach first introduced in \cite{BeiraoRussoVacca_2019}
and then extended in \cite{dassi2020mixed} for scalar problems in mixed form.

Following \cite{BeiraoRussoVacca_2019}, we consider a sequence of computational tessellation $\Omega_h$ of the domain of
interest $\Omega$ into general polygons \toCheck{(having possibly curved interfaces)} indicated with $E \in \Omega_h$.  Clearly for $E, E^\prime \in \Omega_h$ such that $E \neq E^\prime$ we have $E \cap E^\prime = \emptyset$ and $\overline{\Omega_h} = \cup_{E \in
\Omega_h} \overline{E}$.
We let $$h_E = {\rm diameter}(E), \quad\quad  h = \sup_{E \in \Omega_h} h_E,$$ and suppose that for all $h$, each element $E\in\Omega_h$ fulfils the following assumptions:
\begin{itemize}
\item[(\textbf{A1})] $E$ is star shaped with respect ta a ball $B_E$ of radius $\geq \varrho h_E$,
\item[(\textbf{A2})] the length of any (possibly curved) edge of $E$ is  $\geq \varrho h_E$,
\end{itemize}
where $\varrho$ is a positive constant. An element $E$ has boundary $\partial E$ represented by a finite number of edges  $e \in \partial E$. The set of edges of a tesselation $\Omega_h$ is indicated with $\mathcal{E}_h$.

We assume that:
\begin{itemize}
\item[(\textbf{A3})] each curve $\Gamma_{i}$, for $i=A,D,N$, composing the boundary $\Gamma$ is of class $C^{m+1}$, with $m\geq 0$, such that it exists, for each of them, an invertible and regular map $\gamma_i: [a_i, b_i] \rightarrow \Gamma_i$ with $a_i, b_i \in \mathbb{R}$.
\end{itemize}

\toCheck{Additional internal curved interfaces $\Gamma_i$, for  $i=1,...,n$ (cf. Figure~\ref{fig:domain_curv}) representing a sharp variation in the mechanical parameters, i.e., $\mu$ and $\rho$, verify assumption \textbf{A3}.}
{ For simplicity, in the following, we assume that $\rho$ and $\mu$ in Problem \ref{modelpb} are piecewise constants with respect to the decomposition $\Omega_h$.}

In the case of a single curved boundary/interface, to ease the presentation we will drop the subscript $i$. At the grid level,  elements facing  $\Gamma_i$ have
curved edges. See Figure \ref{fig:domain_curv} as an example.
\begin{figure}[htb]
    \centering
    \resizebox{0.33\textwidth}{!}{\fontsize{0.75cm}{2cm}\selectfont
\begingroup%
  \makeatletter%
  \providecommand\color[2][]{%
    \errmessage{(Inkscape) Color is used for the text in Inkscape, but the package 'color.sty' is not loaded}%
    \renewcommand\color[2][]{}%
  }%
  \providecommand\transparent[1]{%
    \errmessage{(Inkscape) Transparency is used (non-zero) for the text in Inkscape, but the package 'transparent.sty' is not loaded}%
    \renewcommand\transparent[1]{}%
  }%
  \providecommand\rotatebox[2]{#2}%
  \newcommand*\fsize{\dimexpr\f@size pt\relax}%
  \newcommand*\lineheight[1]{\fontsize{\fsize}{#1\fsize}\selectfont}%
  \ifx\svgwidth\undefined%
    \setlength{\unitlength}{183.0867473bp}%
    \ifx\svgscale\undefined%
      \relax%
    \else%
      \setlength{\unitlength}{\unitlength * \real{\svgscale}}%
    \fi%
  \else%
    \setlength{\unitlength}{\svgwidth}%
  \fi%
  \global\let\svgwidth\undefined%
  \global\let\svgscale\undefined%
  \makeatother%
  \begin{picture}(1,0.50086814)%
    \lineheight{1}%
    \setlength\tabcolsep{0pt}%
    \put(0,0){\includegraphics[width=\unitlength,page=1]{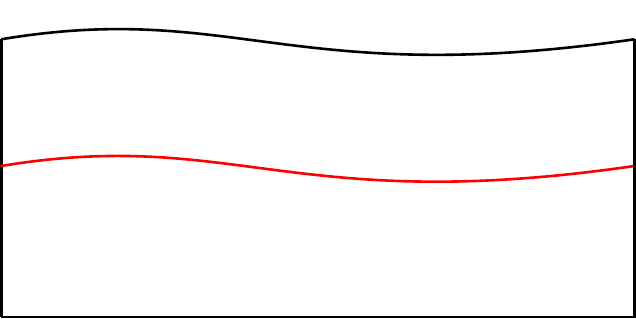}}%
    \put(0.45761671,0.44692129){\color[rgb]{0,0,0}\makebox(0,0)[lt]{\lineheight{1.25}\smash{\begin{tabular}[t]{l}$\Gamma_1$\end{tabular}}}}%
    \put(0.45248223,0.25087985){\color[rgb]{0,0,0}\makebox(0,0)[lt]{\lineheight{1.25}\smash{\begin{tabular}[t]{l}$\Gamma_0$\end{tabular}}}}%
    \put(0.05622994,0.1220877){\color[rgb]{0,0,0}\makebox(0,0)[lt]{\lineheight{1.25}\smash{\begin{tabular}[t]{l}$\Omega$\end{tabular}}}}%
  \end{picture}%
\endgroup%
}%
    \hspace*{0.1\textwidth}%
    \resizebox{0.45\textwidth}{!}{\fontsize{0.75cm}{2cm}\selectfont
\begingroup%
  \makeatletter%
  \providecommand\color[2][]{%
    \errmessage{(Inkscape) Color is used for the text in Inkscape, but the package 'color.sty' is not loaded}%
    \renewcommand\color[2][]{}%
  }%
  \providecommand\transparent[1]{%
    \errmessage{(Inkscape) Transparency is used (non-zero) for the text in Inkscape, but the package 'transparent.sty' is not loaded}%
    \renewcommand\transparent[1]{}%
  }%
  \providecommand\rotatebox[2]{#2}%
  \newcommand*\fsize{\dimexpr\f@size pt\relax}%
  \newcommand*\lineheight[1]{\fontsize{\fsize}{#1\fsize}\selectfont}%
  \ifx\svgwidth\undefined%
    \setlength{\unitlength}{296.72889289bp}%
    \ifx\svgscale\undefined%
      \relax%
    \else%
      \setlength{\unitlength}{\unitlength * \real{\svgscale}}%
    \fi%
  \else%
    \setlength{\unitlength}{\svgwidth}%
  \fi%
  \global\let\svgwidth\undefined%
  \global\let\svgscale\undefined%
  \makeatother%
  \begin{picture}(1,0.29929045)%
    \lineheight{1}%
    \setlength\tabcolsep{0pt}%
    \put(0,0){\includegraphics[width=\unitlength,page=1]{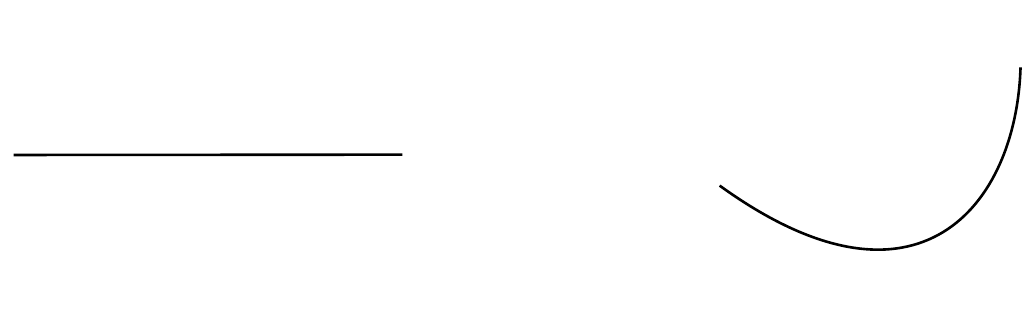}}%
    \put(0.14195419,0.17400083){\color[rgb]{0,0,0}\makebox(0,0)[lt]{\lineheight{1.25}\smash{\begin{tabular}[t]{l}$\mathfrak{e}$\end{tabular}}}}%
    \put(0.78265616,0.08012532){\color[rgb]{0,0,0}\makebox(0,0)[lt]{\lineheight{1.25}\smash{\begin{tabular}[t]{l}${e}$\end{tabular}}}}%
    \put(0,0){\includegraphics[width=\unitlength,page=2]{map_edge.pdf}}%
    \put(0.5242147,0.26600433){\color[rgb]{0,0,0}\makebox(0,0)[lt]{\lineheight{1.25}\smash{\begin{tabular}[t]{l}$\gamma$\end{tabular}}}}%
    \put(0.52730376,0.00911305){\color[rgb]{0,0,0}\makebox(0,0)[lt]{\lineheight{1.25}\smash{\begin{tabular}[t]{l}$\gamma^{-1}$\end{tabular}}}}%
    \put(0,0){\includegraphics[width=\unitlength,page=3]{map_edge.pdf}}%
  \end{picture}%
\endgroup%
}%
    \caption{On the left an example of a domain $\Omega$ with an internal
    interface $\Gamma_0$ and with a portion of the
    boundary curved $\Gamma_1$. On the right, the direct and inverse mapping
    between a curved edge $e$ and the reference interval $\mathfrak{e}$.}%
    \label{fig:domain_curv}
\end{figure}
In this case, and with an abuse
in notation, we still make use of the mapping $\gamma_i$ and extend it also for the
straight case. For a curved edge $e$, we have $\gamma_i : \mathfrak{e}
\rightarrow e$ where $\mathfrak{e} = [0, h_e]$ is a rectified reference segment,
while for a non curved edge $e$, $\gamma_i: \mathfrak{e} \rightarrow e$ is an affine map.

{
The proper characterization of the virtual element space goes along the following steps: 
\textit{(i)} the introduction of the local virtual element space;
\textit{(ii)} the selection of a number of  degrees of freedom that uniquely characterizes the virtual element functions of the local  space;
\textit{(iii)} the definition of projectors onto subspaces of polynomials that are computable by the degrees of freedom. 
}

\paragraph{Polynomial approximation spaces.}
{ For any integer $n \geq -1 $ and any element $E \in \Omega_h$}, we define
$\mathbb{P}_n(E)$ to be the set of polynomials on $E$ of degree less or equal to $n$. In the case $n=-1$ we set $\mathbb{P}_{-1}(E) = \{ 0 \}$.
Moreover, we introduce the global polynomial space as
\begin{gather*}
    \mathbb{P}_n(\Omega_h) = \{v \in L^2(\Omega_h): v|_E \in \mathbb{P}_n(E) \,
    \forall E \in \Omega_h\}.
\end{gather*}
Identifying with $\bm{x}_E$ and $h_E$ the centre and the diameter  of the element $E$, respectively, we introduce the space of normalized monomials as
\begin{gather*}
    \mathcal{M}_n(E) = \left\{ m \in L^2(E):\, m(\bm{x}) = \left( \frac{\bm{x} -
    \bm{x}_E}{h_E}
    \right)^{\bm{\beta}} \text{ for } \vert{\bm{\beta}}\vert\leq n\right\}.
\end{gather*}
The space $\mathcal{M}_n(E)$ forms a basis for $\mathbb{P}_n(E)$. For the edges
of the grid, we introduce approximation spaces that consider the curved
geometry. For a reference (rectified) segment $\mathfrak{e}$, we introduce the monomial set
\begin{gather*}
    \mathcal{M}_n(\mathfrak{e}) = \left\{m \in L^2(\mathfrak{e}):\, m(x) =  \left( \frac{x -
    x_{\mathfrak{e}}}{h_{\mathfrak{e}}}
    \right)^{\beta} \text{ for } \beta\leq n\right\},
\end{gather*}
with $x_{\mathfrak{e}}$ the midpoint of $\mathfrak{e}$ and $h_{\mathfrak{e}} =
\vert \mathfrak{e} \vert$ its size. Next, we define the mapped polynomial spaces on the edges in $\mathcal{E}_h$, given by
\begin{gather*}
    \widetilde{\mathbb{P}}_n(e) = \{ \widetilde{v} = v \circ \gamma^{-1}:\, v
    \in \mathbb{P}_n(\mathfrak{e}) \}
    \quad \text{and} \quad
    \widetilde{\mathcal{M}}_n(e) = \{ \widetilde{m} = m \circ \gamma^{-1}:\, m
    \in \mathcal{M}_n(\mathfrak{e}) \},
\end{gather*}
where $\gamma$ represents the local map of the edge $e$ to $\mathfrak{e}$ as discussed before.

\paragraph{Projection operators.}
 As a second step we introduce in this part the projection operators that are useful for the actual computation of the virtual element formulation given in the sequel. We firstly consider the  projector $\Pi_n^\nabla : H^1(E) \rightarrow \mathbb{P}_n(E)$ defined as
\begin{gather}\label{eq:projection_h1}
    \begin{aligned}
        & (\nabla \Pi_n^\nabla v, \nabla q_n)_E = (\nabla v, \nabla q_n)_E &&
        \forall \, q_n \in \mathbb{P}_n(E) \text{ and } \forall\, v \in
        H^1(E),\\
        & (\Pi^\nabla_n v, 1)_{\partial E} = (v, 1)_{\partial E} && \forall\, v
        \in H^1(E),
    \end{aligned}
\end{gather}
and secondly the $L^2$ projection operator $\Pi^0_n: L^2(E) \rightarrow
\mathbb{P}_n(E)$ which is given by
\begin{gather}\label{eq:projection_l2}
    (\Pi^0_n v, q_n)_E = (v, q_n)_E \quad \forall \, q_n \in \mathbb{P}_n(E) \text{ and } \forall\, v \in
    L^2(E).
\end{gather}
Finally,
we introduce the $L^2$ projection operator of vector valued functions defined as
$\bm{\Pi}_{n}^0: [L^2(E)]^2 \rightarrow [\mathbb{P}_{n}(E)]^2$ and given by
\begin{gather}\label{eq:projection_l2_grad}
    (\bm{\Pi}_{n}^0 \bm{v}, \bm{q}_n)_E = (\bm{v}, \bm{q}_n)_E \quad \forall\,
    \bm{v} \in [L^2(E)]^2 \text{ and } \forall\,
    \bm{q}_n \in [\mathbb{P}_{n}(E)]^2,
\end{gather}
which will be employed to approximate the gradient of $v$.

\paragraph{Approximation spaces and degrees of freedom.}
{ Let $k \geq 1$ be the polynomial order of the method.}
By following the approach  derived  in~\cite{BeiraoRussoVacca_2019} for elliptic problems, we select the following enhanced local virtual element space  defined as
\begin{gather}\label{eq:vk}
    \begin{aligned}
        V_k(E) = \{ v \in H^1(E):\, v \in C(\partial E), \, \Delta v \in \mathbb{P}_{k}(E), \, v \in
        \widetilde{\mathbb{P}}_k(e) \, \forall e \in \partial E, \\ (\Pi^\nabla_k v
        - v, q) = 0 \, \forall q \in \mathbb{P}_k(E) \setminus \mathbb{P}_{k-2}(E) \}.
    \end{aligned}
\end{gather}
We remark that, if $E$ is an element with only straight edges, then \eqref{eq:vk} is equivalent
to the { enhanced} VEM space as in \cite{projectors,BeiraodaVeiga2014a}.  In general, the space $V_k(E)$ does not give a closed form for computing its shape functions. 

{
We here summarize the main properties of the space $V_k(E)$ (we refer to \cite{BeiraoRussoVacca_2019,projectors} for a deeper analysis).

\begin{itemize}
\item [\textbf{(P1)}] \textbf{Polynomial inclusion:} $\mathbb{P}_0(E) \subseteq V_k(E)$ but in general $\mathbb{P}_k(E) \nsubseteq V_k(E)$.

\item [\textbf{(P2)}] \textbf{Degrees of freedom:}
the following linear operators constitute a set of DoFs for $V_k(E)$: for any $v \in V_k(E)$ we consider
\begin{itemize}
    \item the value of $v$ at the vertices of $E$;
    \item the values of $v$ mapped through $\gamma$ at the $k-1$ internal points of the Gauss-Lobatto quadrature rule with $k+1$ points;
    \item the internal scaled moments of $v$, up to order $k-2$, given by
    $\vert{E}\vert^{-1} (v, m_i)_E$ for any $m_i \in \mathcal{M}_{k-2}(E)$.
\end{itemize}

\item [\textbf{(P3)}] \textbf{Polynomial projections:}
the DoFs  allow us to compute the following linear operators:
\[
\Pi_k^{\nabla} \colon V_k(E) \to \mathbb{P}_k(E), \qquad
\Pi_k^0 \colon V_k(E) \to \mathbb{P}_k, \qquad
\bm{\Pi}_{k-1}^0 \colon \nabla V_k(E) \to [\mathbb{P}_{k-1}(E)]^2 \,.
\]
\end{itemize}
The global virtual element space is obtained by gluing such local spaces, i.e.
\begin{equation}
\label{eq:global}
V_k(\Omega_h) = \{v \in V \quad \text{s.t.} \quad v \in V_k(E) \quad \text{for any $E\in\Omega_h$} \}
\end{equation}
with the associated set of degrees of freedom.
}

\paragraph{The Virtual Element formulation.}
We introduce in this part the discrete weak formulation of \eqref{eq:weak_form}, by using
the projection operators and the functional spaces previously given.
Given an element $E$, by considering the trial and test functions in the space $V_k(E)$ both forms $m(\cdot,\cdot)$ and $a(\cdot,\cdot)$ in~\eqref{def:bil} are not computable. We denote with a superscript $E$ the previously introduced forms restricted to the element $E$ and, { recalling property \textbf{(P1)}}, by $\V_k(E) = V_k(E) + \mathbb{P}_k(E)$. By following the standard
procedure for the Virtual Element Method \cite{BeBrCaMaMaRu2012}, we write
\begin{gather*}
    m^E(p, v) = (\rho \Pi^0_k p, \Pi^0_k v)_E + (\rho T^0_k p, T^0_k v)_E \approx m_k^E(p, v) =
    (\rho \Pi^0_k p, \Pi^0_k v)_{\ilario{E}} + \rho \ilario{h_E^2} s^E(T^0_k p, T^0_k v)_{\ilario{E}},
\end{gather*}
with $T^0_k = I - \Pi^0_k$.  The bilinear form $m_k^E: \V_k(E) \times \V_k(E)
\rightarrow \mathbb{R}$ is then an approximation of the local form $m^E$
composed by two computable parts:  the consistency and the stabilization term, respectively. 
\fd{The stabilization term can be any bilinear form that satifies specific properties, see, e.g.,~\cite{BeBrCaMaMaRu2012},
in this paper we use $s^E:\V_k(E) \times \V_k(E)
\rightarrow \mathbb{R}$ defined~as}
\begin{gather*}
    s^E(p, v) = \sum_{i=1}^{\sharp \rm dof} {\rm dof}_i(p) {\rm dof}_i(v)
    \quad \forall p, v \in \V_k(E),
\end{gather*}
where ${\rm dof}_i$ is the value of the $i$-th degree of freedom of the
argument and $\sharp {\rm dof}$ is the total number of degrees of freedom associated to $V_k(E)$.
Starting from the computability of $\Pi^0_k$ and $s_E$ { (cf. property \textbf{(P3)})} the bilinear form $m^E(\cdot,\cdot)$ is computable.

We follow the same approach for the form $a^E(\cdot,\cdot)$, by considering the decomposition
\begin{gather*}
    a^E(p, v) = (\mu \nabla p, \nabla v)_E = (\mu \bm{\Pi}_{k-1}^0 \nabla p,
    \bm{\Pi}_{k-1}^0 \nabla v)_E + (\mu \bm{T}_{k-1}^0 \nabla p,
    \bm{T}_{k-1}^0 \nabla v)_E,
\end{gather*}
 with $\bm{T}_{k-1}^0 = I - \bm{\Pi}_{k-1}^0$.
The form $a^E(\cdot,\cdot)$ is not computable since it contains virtual functions.
To have a computable form we introduce the stabilization form  $s^E$  before but scaled by a representative value of $\mu$ in $E$. Finally, we obtain
\ilario{
\begin{gather}\label{eq:a^E}
   a^E(p, v) \approx a_k^E(p, v) = (\mu \bm{\Pi}_{k-1}^0 \nabla p, \bm{\Pi}_{k-1}^0 \nabla v)_E
    +\mu \, s^E({T^\nabla_k} p, {T^\nabla_k} v)
\end{gather}
}
{where $T^\nabla_k = (I-\Pi^\nabla_k)$} and $a_k^E:\V_k(E) \times \V_k(E)\rightarrow \mathbb{R}$. 

%
%
%
%
%
%
%
Before stating the discrete weak form of Problem~\ref{eq:weak_form},
we note that:
\begin{itemize}
\item[-] Dirichlet boundary data are projected into the space $\widetilde{\mathbb{P}}_k$ and imposed in a strong way \textcolor{black}{(point-wisely)};
\item[-] the initial conditions are approximated by considering the \ilario{interpolation $(({p}_0)_I, ({p}_1)_I)$ of $(p_0, p_1)$};
\item[-] the global bilinear forms $m_k:\V_k(\Omega_h) \times \V_k(\Omega_h) \rightarrow\mathbb{R}$
and $a_k:\V_k(\Omega_h) \times \V_k(\Omega_h) \rightarrow\mathbb{R}$
are given by
\begin{gather*}
    m_k(p, v) = \sum_{E \in\Omega_h} m_k^E(p, v)
    \quad \text{and} \quad
    a_k(p, v) = \sum_{E \in\Omega_h} a_k^E(p, v)
    \quad \forall p, v \in \V_k(\Omega_h),
\end{gather*}
where $\V_k(\Omega_h) = V_k(\Omega_h) + \prod_{E\in\Omega_h} \mathbb{P}_k(E)$;
\item[-] the discrete functional $F_k(v): \V_k(\Omega_h)
\rightarrow\mathbb{R}$ is given from the local projection
\begin{gather*}
    F_k(v) = \sum_{E \in\Omega_h} (f, \ilario{\Pi_{k}^0}  v)_E
    \quad \forall v \in \V_k(\Omega_h);
\end{gather*}
\item[-] the bilinear form $c$ is computable because a function $v\in \V_k(\Omega_h)$ resctricted to $\Gamma_{A}$ is a mapped polynomial in $\widetilde{\mathbb{P}}_k(e)$.
\end{itemize}
 The discretized problem can be written as in the following.
\begin{problem}[Wave problem - Virtual Element formulation]\label{eq:semi_discr_weak_form}
    The Virtual Element formulation of Problem \ref{eq:weak_form} is:
    for any time $t\in (0,T]$ find $p_h = p(t) \in V_k(\Omega_h)$ such that
    \begin{gather*}%
        \begin{aligned}
            &m_k(\ddot{p}_h, v_h) + c(\dot{p}_h, v_h) + a_k(p_h, v_h) = F_k(v_h) && \forall v_h \in
            V_k(\Omega_h),\\
            &(p_h(0), \dot{p}_h(0)) = \ilario{(({p}_0)_I, ({p}_1)_I}).
        \end{aligned}
    \end{gather*}
\end{problem}

    \paragraph{Algebraic formulation.}

We start by introducing the matrices
associated to Problem \ref{eq:semi_discr_weak_form}. We indicate with $\varphi$ an element of the basis of $V_k(\Omega_h)$ and set
\begin{gather*}
    \begin{aligned}
        &M \in \mathbb{R}^{n\times n} && [M]_{ij} = m_k(\varphi_j, \varphi_i), &&
        &&
        C \in \mathbb{R}^{n\times n} &&[C]_{ij} = c(\varphi_j, \varphi_i),\\
        &A \in \mathbb{R}^{n\times n} && [A]_{ij} = a_k(\varphi_j, \varphi_i), &&
        &&
        \bm{f} \in \mathbb{R}^{n} && [\bm{f}]_{i} = F_k(\varphi_i),
    \end{aligned}
\end{gather*}
where $n={\rm dim} V_k(\Omega_h)$ and $[\cdot]_{ij}$ denotes the entry at row $i$ and column $j$ of the matrix
in the square brackets, likewise for a vector. Now, we can formulate the following problem.
\begin{problem}[Wave problem - Semi-discrete formulation]
    The semi-discrete fomulation of Problem \ref{eq:semi_discr_weak_form} is
    the following:
    for any time $t\in (0,T]$ find $\bp = \bp(t) \in \mathbb{R}^{n}$ such that
    \begin{gather*}
        \begin{aligned}
            &M \ddot{\bp}(t) + C \dot{\bp}(t) + A {\bp}(t) = \bfe(t)  &&  t \in (0,T],\\
            &(\bp(0), \dot{\bp}(0)) = (\bp_0, \bp_1), &&
        \end{aligned}.
    \end{gather*}
having set $\bp_0 = ({p}_0)_I$ and  $\bp_1 = ({p}_1)_I$.
\end{problem}

\toCheck{
\begin{remark}
In the following section we present the stability and convergence analysis for Problem \ref{eq:semi_discr_weak_form}
since we are more interested in the properties of spatial discretization. The analysis of the problem discretized both in space and time is beyond the scope of the paper. The latter can be obtained by combining the following results with classical convergence results for finite difference discretizations for Cauchy problems.
\end{remark}
}

\section{Theoretical analysis}\label{Sc:TeorhericalAnalysis}

{ In this section we prove the stability and the convergence of the semi-discrete virtual element approximation in the energy norm \eqref{eq:three-bar-norm}. The stability of the discrete solution is showed in Theorems~\ref{theorem:semi-discrete:stability}, whereas the a priori error estimates of the approximation error are derived in Theorem~\ref{theorem:semi-discrete:convergence}.
We start by recalling two important Lemmas.
}


\begin{lemma}\cite[Lemma 3.10]{BeiraoRussoVacca_2019}\label{lemma_cont}
Let $E \in \Omega_h$. Under the asssumptions \textbf{A1}--\textbf{A3}
and for any $\varepsilon \in (0,1/2)$ the following inequality holds
\begin{equation}\label{stability1_S}
s^E(p,p) \leq C h_E^{-2} \| p \|_{0,E}^2 + C h_E^{2\varepsilon} |p|_{1+\varepsilon,E}^2 \quad for \; all \; p \in \mathbb{V}_k(E),
\end{equation}
where the constant $C$ depends on $k, \varepsilon$, the shape regularity constant $\varrho$ and the map $\gamma$.
\end{lemma}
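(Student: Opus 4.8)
The plan is to establish the stabilization estimate \eqref{stability1_S} by exploiting the equivalence of the stabilization form $s^E$ with the discrete $\ell^2$ inner product of the degrees of freedom, and then by relating the degrees of freedom of $p$ to continuous Sobolev norms of $p$. First I would recall that, by definition,
\[
s^E(p,p) = \sum_{i=1}^{\sharp \rm dof} \left({\rm dof}_i(p)\right)^2,
\]
so the task reduces to bounding each type of degree of freedom appearing in property \textbf{(P2)}: the vertex values, the edge Gauss--Lobatto point values (mapped through $\gamma$), and the internal scaled moments $|E|^{-1}(p, m_i)_E$. The internal moments are the easy part: by the Cauchy--Schwarz inequality and the uniform boundedness of the normalized monomials $m_i \in \mathcal{M}_{k-2}(E)$, one gets $|E|^{-2}(p,m_i)_E^2 \leq C h_E^{-2}\|p\|_{0,E}^2$ after accounting for $|E| \sim h_E^2$, which already fits inside the first term on the right-hand side.

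The heart of the matter is the pointwise evaluations, since for $p \in \V_k(E)$ with only $H^{1+\varepsilon}$ regularity the trace on $\partial E$ lives in a fractional Sobolev space and point values are not controlled by $\|p\|_{0,E}$ alone. Here I would use a scaled trace inequality together with a one-dimensional embedding on the (curved) edges. Concretely, for each edge $e$, the restriction $p|_e$ is a mapped polynomial in $\widetilde{\mathbb{P}}_k(e)$, and the Gauss--Lobatto nodal values are controlled by the $L^\infty(e)$ norm; a scaled Sobolev embedding $H^{1/2+\varepsilon}(e) \hookrightarrow L^\infty(e)$ on the one-dimensional edge, combined with a multiplicative trace inequality of the form
\[
\|p\|_{H^{1/2+\varepsilon}(\partial E)}^2 \leq C\left(h_E^{-1}\|p\|_{0,E}^2 + h_E^{2\varepsilon}|p|_{1+\varepsilon,E}^2\right),
\]
yields exactly the two terms with their stated scalings in $h_E$. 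The regularity of the map $\gamma$ enters precisely at this step, since passing from the reference segment $\mathfrak{e}$ to the physical curved edge $e$ and back introduces constants depending on $\gamma$ and its derivatives; assumptions \textbf{A1}--\textbf{A3} guarantee these constants are uniform in the mesh family.

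The main obstacle I anticipate is the careful tracking of the $h_E$-scaling through the curved trace and embedding inequalities, because the fractional exponent $\varepsilon$ must be matched exactly and the curvature introduces lower-order but nontrivial contributions from the Jacobian of $\gamma$. One must verify that the standard polynomial inverse estimates, normally stated on straight edges, survive the composition with $\gamma^{-1}$ without degrading the exponent, and that the star-shapedness hypothesis \textbf{A1} provides the required scaled trace inequality on the whole element with a shape-regularity-dependent constant. Since this is a verbatim citation of \cite[Lemma 3.10]{BeiraoRussoVacca_2019}, I would ultimately defer the full technical verification to that reference, where the fractional trace and embedding machinery on curved edges is developed; the role here is to confirm that assumptions \textbf{A1}--\textbf{A3} are exactly those needed to invoke it.
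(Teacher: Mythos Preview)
The paper does not prove this lemma at all: it is stated as a direct citation of \cite[Lemma~3.10]{BeiraoRussoVacca_2019} with no accompanying argument, and you correctly identify this at the end of your proposal. Your sketch of the underlying proof is a reasonable outline of the standard strategy (split into moment DoFs and nodal DoFs, control the latter via a scaled fractional trace inequality and a one-dimensional embedding), and it aligns with how the cited reference proceeds.

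One small inaccuracy worth flagging: you write that ``the restriction $p|_e$ is a mapped polynomial in $\widetilde{\mathbb{P}}_k(e)$'', but the lemma is stated for $p \in \mathbb{V}_k(E) = V_k(E) + \mathbb{P}_k(E)$, and a polynomial $q_k \in \mathbb{P}_k(E)$ restricted to a \emph{curved} edge is generally not in $\widetilde{\mathbb{P}}_k(e)$. This does not break your argument, since the nodal DoFs are simply point evaluations and are controlled by $\|p\|_{L^\infty(e)}$ regardless of whether the trace is a mapped polynomial; but the sentence as written is only valid for the $V_k(E)$ component and should not be invoked as a structural simplification.
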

\begin{lemma}\cite[Lemma 3.12]{BeiraoRussoVacca_2019}\label{lemma_coer}
Let $E \in \Omega_h$. Under the asssumptions \textbf{A1}--\textbf{A3} the following inequality holds
\begin{equation}\label{stability2_S}
s^E(p,p) \geq C | p |_{1,E}^2 \quad for \; all \; p \in V_k(E),
\end{equation}
where the constant $C$ depends on $k$ and the shape regularity constant $\varrho$.
Moreover it holds
\begin{equation}\label{stability2_S_L2}
 h_E^{2} s^E(p,p) \geq C \| p \|_{0,E}^2 \quad for \; all \; p \in V_k(E).
\end{equation}
\end{lemma}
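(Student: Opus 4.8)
The plan is to read the claim as a \emph{norm equivalence}. By definition $s^E(p,p)=\sum_{i=1}^{\sharp{\rm dof}}{\rm dof}_i(p)^2$ is exactly the squared Euclidean norm of the degrees-of-freedom vector of $p$, so the two inequalities assert that this discrete DoF-norm controls the $H^1$-seminorm $|\cdot|_{1,E}$ and, after the factor $h_E^2$, the $L^2$-norm $\|\cdot\|_{0,E}$. By the unisolvence property \textbf{(P2)} the map sending $p\in V_k(E)$ to its DoF vector is a linear isomorphism, hence $\sqrt{s^E(\cdot,\cdot)}$ is a genuine (positive-definite) norm on the finite-dimensional space $V_k(E)$; on any single fixed element it therefore dominates both the seminorm and the norm. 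The entire content of the lemma is thus to make the resulting constant \emph{uniform} in $h_E$ and in the element, and in particular independent of the edge map $\gamma$.

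First I would strip out the dependence on $h_E$ by scaling $E$ onto a reference element $\hat E$ of unit diameter via $x\mapsto(x-\bm{x}_E)/h_E$. One checks that all three families of degrees of freedom are scale invariant: vertex values and the $\gamma$-mapped Gauss--Lobatto edge values are plain nodal (function) values, and the internal moments $|E|^{-1}(p,m_i)_E$ are normalized precisely so that $|E|^{-1}(p,m_i)_E=|\hat E|^{-1}(\hat p,\hat m_i)_{\hat E}$; hence $s^E(p,p)=s^{\hat E}(\hat p,\hat p)$. In two dimensions one has the scalings $|p|_{1,E}=|\hat p|_{1,\hat E}$ and $\|p\|_{0,E}=h_E\|\hat p\|_{0,\hat E}$. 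These identities collapse both claims into the single scale-free statement $s^{\hat E}(\hat p,\hat p)\geq C\,(|\hat p|_{1,\hat E}^2+\|\hat p\|_{0,\hat E}^2)$ with $C=C(k,\varrho)$: the first summand on the right yields \eqref{stability2_S}, while the $L^2$ scaling reproduces exactly the factor $h_E^2$ in \eqref{stability2_S_L2}.

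To establish the scale-free reference estimate I would split the degrees of freedom into a boundary group and an interior group. The boundary nodal values on an edge $e$ are the values of the pulled-back polynomial $v=p\circ\gamma\in\mathbb{P}_k(\mathfrak{e})$ at the $k+1$ Gauss--Lobatto nodes of the \emph{fixed} rectified segment $\mathfrak{e}$, so a one-dimensional norm/inverse inequality for degree-$k$ polynomials bounds the trace of $p$ from below by these DoFs with a $k$-dependent constant; this is the step that decouples everything from $\gamma$ and explains why the final constant does not involve the map. The interior moment DoFs control the low-order part $\Pi^0_{k-2}\hat p$ by a standard moment estimate, and using that $\Delta p\in\mathbb{P}_k(E)$ together with the enhancement constraint in \eqref{eq:vk} these two pieces of data determine $\hat p$; a scaled Poincar\'e- plus trace-type argument then assembles them into the full lower bound for $|\hat p|_{1,\hat E}$ and $\|\hat p\|_{0,\hat E}$.

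The main obstacle is making this reference estimate \emph{uniform} over the whole shape-regular family permitted by \textbf{A1}--\textbf{A3}: finite-dimensional equivalence is automatic on each fixed configuration but a priori delivers a configuration-dependent constant, and turning it into a single $C(k,\varrho)$ independent of the curved parametrization is the delicate point. The decisive device is that the boundary DoFs live on the fixed reference segment and are merely nodal values of a polynomial of degree $k$ — a space on which all norms are equivalent with purely $k$-dependent constants — while the control of the genuine interior seminorm (not only the trace) is what forces one to invoke the virtual-space structure $\Delta p\in\mathbb{P}_k(E)$ and to bring in the star-shapedness radius $\varrho$ through the Poincar\'e constant.
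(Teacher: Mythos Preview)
The paper does not prove this lemma: it is quoted verbatim from \cite[Lemma~3.12]{BeiraoRussoVacca_2019} and no argument is supplied, so there is no in-paper proof to compare against. Your proposal is therefore not competing with anything written here; it is an attempt to reconstruct the argument behind the citation.

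As a reconstruction, your plan is sound in outline. The reduction to a unit-diameter reference element is correct (your scaling identities for $|\cdot|_{1,E}$, $\|\cdot\|_{0,E}$, and the three DoF families check out in two dimensions), and the recognition that $s^E(p,p)$ is the squared Euclidean DoF-norm, hence a genuine norm on $V_k(E)$ by unisolvence, is exactly the right starting point. Your observation that the edge DoFs are nodal values of a degree-$k$ polynomial on the \emph{fixed} rectified segment $\mathfrak{e}$, and therefore control the pulled-back trace with a purely $k$-dependent constant, is the key mechanism that removes $\gamma$ from the final constant and matches the stated dependence.

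Where the proposal remains a sketch is precisely where you flag it: the passage from ``boundary trace control plus interior moment control'' to a uniform lower bound on $|\hat p|_{1,\hat E}$ is not a one-line Poincar\'e argument, because the space $V_k(\hat E)$ itself varies with the curved edge, so a naive compactness argument over reference shapes does not immediately close. The cited proof in \cite{BeiraoRussoVacca_2019} handles this by an explicit inverse-type estimate exploiting $\Delta p\in\mathbb{P}_k(E)$ together with the harmonic-lifting structure of the virtual functions, rather than an abstract finite-dimensional equivalence; if you want to make your outline rigorous you will need a comparable constructive step at that point.
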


\noindent Next, we introduce the following results for the discrete bilinear forms $m_k^E(\cdot,\cdot)$ and $a_k^E(\cdot,\cdot)$.
\begin{proposition}\label{prop:stabm}
(k-consistency).For all $p \in V_k(E)$ and for all
  $q_k \in \mathbb{P}_k(E)$ it holds
  \begin{align}\label{eq:msh:k-consistency}
 m_k^E(p,q_k) = m^E(p,q_k)\,.
  \end{align}
(Stability). For any $\varepsilon \in (0,1/2)$ there exist two uniform positive constants
  $\mu_*,\,\mu^*$, such that for any element $E \in \Omega_h$ it holds that
  \begin{align}
    m_k^E(p,p) & \geq \mu_* \, m^E (p,p), \label{eq:msh:stability_1} \\
    m_k^E(p+q_k,p+q_k) & \leq \mu^* \left( \|p + q_k \|_{0,E}^2 + \|(I-\Pi_k^0) p\|_{0,E}^2 + h^{2\varepsilon+2}|(I-\Pi_k^0) p|_{1+\varepsilon,E}^2\right), \label{eq:msh:stability_2}
  \end{align}
  for all $p \in V_k(E)$ and $q_k \in \mathbb{P}_k(E)$.
\end{proposition}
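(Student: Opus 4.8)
The plan is to handle the three assertions one at a time, in each case reducing matters to the two stabilization bounds of Lemmas~\ref{lemma_cont} and~\ref{lemma_coer} applied to the single relevant argument $T_k^0 p=(I-\Pi_k^0)p$. Throughout I exploit that $\rho$ is constant on $E$: by $L^2$-orthogonality of $\Pi_k^0$ the cross terms vanish, so that $m^E(p,v)=(\rho\,\Pi_k^0 p,\Pi_k^0 v)_E+(\rho\,T_k^0 p,T_k^0 v)_E=\rho\,(p,v)_E$, and in particular $m^E(p,p)=\rho\|p\|_{0,E}^2=\rho\big(\|\Pi_k^0 p\|_{0,E}^2+\|T_k^0 p\|_{0,E}^2\big)$ by Pythagoras.

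For the $k$-consistency \eqref{eq:msh:k-consistency} the observation is simply that $\Pi_k^0 q_k=q_k$, hence $T_k^0 q_k=0$, for $q_k\in\mathbb{P}_k(E)$. Inserting this in the definition of $m_k^E$ annihilates the stabilization term and leaves $m_k^E(p,q_k)=(\rho\,\Pi_k^0 p,q_k)_E$; the same substitution in $m^E$ gives $(\rho\,\Pi_k^0 p,q_k)_E$ as well, so the two forms coincide. No estimate is needed here.

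For the upper bound \eqref{eq:msh:stability_2} I would first record the algebraic identity that drives the statement: writing $w=p+q_k$ one has $\Pi_k^0 w=\Pi_k^0 p+q_k$, and therefore $T_k^0 w=w-\Pi_k^0 w=T_k^0 p$, i.e. the stabilized argument depends only on $p$. Consequently $m_k^E(w,w)=\rho\|\Pi_k^0 w\|_{0,E}^2+\rho\,h_E^2\,s^E(T_k^0 p,T_k^0 p)$. The first term is bounded by $\rho\|w\|_{0,E}^2=\rho\|p+q_k\|_{0,E}^2$ since $\Pi_k^0$ is an $L^2$-contraction; the second is handled by applying \eqref{stability1_S} of Lemma~\ref{lemma_cont} to $T_k^0 p$ and multiplying by $h_E^2$, which produces exactly $\rho C\|(I-\Pi_k^0)p\|_{0,E}^2+\rho C\,h_E^{2\varepsilon+2}|(I-\Pi_k^0)p|_{1+\varepsilon,E}^2$. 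Absorbing the uniform upper bound of $\rho$ together with $C$ into $\mu^*$ and using $h_E\le h$ yields \eqref{eq:msh:stability_2}. The one point to verify is that $T_k^0 p\in\mathbb{V}_k(E)$, which holds because $p\in V_k(E)$ and $\Pi_k^0 p\in\mathbb{P}_k(E)$; this is precisely the space on which Lemma~\ref{lemma_cont} is stated, so the application is legitimate.

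The lower bound \eqref{eq:msh:stability_1} is the delicate one and I expect it to be the main obstacle. Since $m_k^E(p,p)=\rho\|\Pi_k^0 p\|_{0,E}^2+\rho\,h_E^2 s^E(T_k^0 p,T_k^0 p)$, the estimate reduces to controlling $\|T_k^0 p\|_{0,E}^2$ from above by $h_E^2 s^E(T_k^0 p,T_k^0 p)$. The natural tool is the $L^2$ coercivity \eqref{stability2_S_L2} of Lemma~\ref{lemma_coer}, but that inequality is stated for arguments in $V_k(E)$, whereas $T_k^0 p$ generally lies in $\mathbb{V}_k(E)\setminus V_k(E)$ for curved elements (its trace on a curved edge is the restriction of a plane polynomial, not a mapped polynomial in $\widetilde{\mathbb{P}}_k(e)$); moreover the degrees of freedom are not unisolvent on all of $\mathbb{V}_k(E)$, so \eqref{stability2_S_L2} cannot hold verbatim there. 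To close the argument I would instead show directly that $\mathcal{N}(p)^2:=\|\Pi_k^0 p\|_{0,E}^2+h_E^2 s^E(T_k^0 p,T_k^0 p)$ is a norm on $V_k(E)$: if $\mathcal{N}(p)=0$ then $\Pi_k^0 p=0$, whence $T_k^0 p=p$ and $s^E(p,p)=0$, so every degree of freedom of $p$ vanishes and $p=0$ by unisolvence on $V_k(E)$. A scaling argument to the reference configuration, in which assumptions \textbf{A1}--\textbf{A3} (shape regularity, minimal edge length, regularity of $\gamma$) furnish uniform constants, then upgrades this to the $h_E$-uniform equivalence $\mathcal{N}(p)^2\ge c\,\|p\|_{0,E}^2$. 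Dividing by $m^E(p,p)=\rho\|p\|_{0,E}^2$ gives \eqref{eq:msh:stability_1} with $\mu_*=c$ independent of both $\rho$ and $h_E$.
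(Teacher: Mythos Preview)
Your treatment of $k$-consistency~\eqref{eq:msh:k-consistency} and of the upper bound~\eqref{eq:msh:stability_2} matches the paper's: both rest on the identity $T_k^0(p+q_k)=T_k^0 p$ and a direct application of Lemma~\ref{lemma_cont} to $T_k^0 p\in\mathbb{V}_k(E)$.

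For the lower bound~\eqref{eq:msh:stability_1} your route diverges from the paper's, and the step you flag as ``the main obstacle'' is in fact a genuine gap. You correctly observe that $T_k^0 p\notin V_k(E)$, so~\eqref{stability2_S_L2} cannot be applied to it; your proposed remedy via a scaling argument to a reference configuration is, however, not straightforward on curved elements. There is no single reference element: the local space $V_k(E)$ depends on the specific curved-edge parametrization, and turning the pointwise norm property of $\mathcal{N}$ into an $h_E$-\emph{uniform} equivalence $\mathcal{N}(p)^2\gtrsim\|p\|_{0,E}^2$ would require a compactness argument over the family of admissible rescaled curved elements that you have not supplied and that is genuinely delicate. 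This is precisely why the curved-VEM literature proves explicit estimates rather than invoking scaling.

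The paper avoids the obstruction with a simple trick you are missing: apply~\eqref{stability2_S_L2} to $p$ itself, which \emph{does} lie in $V_k(E)$, obtaining $m^E(p,p)\lesssim h_E^2\,s^E(p,p)$. Then split, by bilinearity, $s^E(p,p)\lesssim s^E(\Pi_k^0 p,\Pi_k^0 p)+s^E(T_k^0 p,T_k^0 p)$. The second summand is already part of $m_k^E(p,p)$. For the first, apply Lemma~\ref{lemma_cont} to the polynomial $\Pi_k^0 p\in\mathbb{V}_k(E)$ and then the polynomial inverse estimate $h_E^{2\varepsilon+2}|\Pi_k^0 p|_{1+\varepsilon,E}^2\lesssim h_E^2|\Pi_k^0 p|_{1,E}^2\lesssim\|\Pi_k^0 p\|_{0,E}^2$, yielding $h_E^2\,s^E(\Pi_k^0 p,\Pi_k^0 p)\lesssim\|\Pi_k^0 p\|_{0,E}^2$. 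Summing the two pieces reproduces $m_k^E(p,p)$ (up to~$\rho$), and no scaling or compactness is needed.
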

\begin{proof}
Property \eqref{eq:msh:k-consistency} follows from the definition of the bilienar form $m_k^E(\cdot,\cdot)$. To prove inequality \eqref{eq:msh:stability_1}, recalling that $s^E(\Pi_k^0 p,\cdot)=0$,   we use Lemma~\ref{lemma_coer} and simple algebra to get
   \begin{align*}
    m^E (p,p) \lesssim h_E^2 s_E(p,p) \lesssim h_E^2 s_E(\Pi_k^0 p,\Pi_k^0 p) + h_E^2 s_E(T_k^0 p, T_k^0 p).
  \end{align*}
Then, by employing Lemma~\ref{lemma_cont} to \toCheck{the first term of the right-hand side} we have
 \begin{align*}
    m^E (p,p)   \lesssim \| \Pi_k^0 p \|^2_{0,E} + h_E^{2\varepsilon+2} | \Pi_k^0 p |^2_{1+\varepsilon,E} + h_E^2 s_E(T_k^0 p ,T_k^0 p).
  \end{align*}
Finally, a standard polynomial inverse estimate on star-shaped { domains} yields
 \begin{align*}
h_E^{2\varepsilon+2} | \Pi_k^0 p |_{1+\varepsilon,E}^2 \lesssim h_E^{2} | \Pi_k^0 p |^2_{1,E}  \lesssim \| \Pi_k^0 p \|^2_{0,E}.
  \end{align*}
The thesis follows by noting that $m^E_k(\Pi_k^0 p,\Pi_k^0 p) = \| \Pi_k^0 p \|^2_{0,E} + h_E^2 s_E(T_k^0 p, T_k^0 p)$.

Concerning inequality \eqref{eq:msh:stability_2}, we first observe that $(I-\Pi_k^0)q_k = 0$ for any $q_k\in\mathbb{P}_k(E)$, yielding 
\begin{align*}
    m_k^E(p+q_k,p+q_k) = \|\Pi_k^0(p + q_k) \|_{0,E}^2 + h_E^2 s_E (T_k^0 p, T_k^0 p).
\end{align*}
Next, by applying Lemma~\ref{lemma_cont} to the above inequality and using the continuity of $\Pi_k^0$ with respect to the $L^2$-norm we infer
\begin{align*}
    m_k^E(p+q_k,p+q_k) \lesssim \| p + q_k \|_{0,E}^2 + \| T_k^0 p\|_{0,E}^2 + h_E^{2\varepsilon+2} | T_k^0 p |^2_{1+\varepsilon,E}.
\end{align*}
and that concludes the proof.
\end{proof}

%
%
%
%
%
%
%
%
%
%
%
%
%
%
%
%
%
%
%
\begin{proposition}\label{prop:staba}
(k-consistency). For all $p \in V_k(E)$ and for all
  $q_k \in \mathbb{P}_k(E) $ it holds
  \begin{align}
    \label{eq:k-consistency}
    a_k^E(p,q_k) = a^E(p,q_k)\,.
  \end{align}
(Stability). For any $\varepsilon \in (0,1/2)$ there exists two uniform positive constants
  $\alpha_*,\,\alpha^*$, such that for any element $E \in \Omega_h$ it holds that
  \begin{align}
    a_k^E(p,p) & \geq \alpha_* \, a^E (p,p)     \label{eq:ash:stability_1}
 \\
    a_k^E(p+q_k,p+q_k) & \leq \alpha^* \left( |p + q_k |_{1,E}^2 + |(I-\Pi^\nabla_0) p|_{1,E}^2 + h^{2\varepsilon}|(I-\Pi^\nabla_0) p|_{1+\varepsilon,E}^2 \right),    \label{eq:ash:stability_2}
  \end{align}
  for all $p \in V_k(E)$ and $q \in \mathbb{P}_k(E)$.
\end{proposition}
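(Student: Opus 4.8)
The plan is to treat the two assertions separately, mirroring the structure of the proof of Proposition~\ref{prop:stabm} but working with the gradient and the $H^1$-seminorm in place of the function and the $L^2$-norm.

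For the $k$-consistency \eqref{eq:k-consistency} I would first observe that the elliptic projector reproduces polynomials, so $T_k^\nabla q_k = (I-\Pi_k^\nabla)q_k = 0$ and the stabilisation contribution in $a_k^E(p,q_k)$ vanishes identically. What remains is $(\mu\,\bm{\Pi}_{k-1}^0\nabla p, \bm{\Pi}_{k-1}^0\nabla q_k)_E$, and since $\nabla q_k \in [\mathbb{P}_{k-1}(E)]^2$ it is left invariant by $\bm{\Pi}_{k-1}^0$. Using that $\mu$ is piecewise constant, hence constant on $E$, so that $\mu\nabla q_k$ is again a vector polynomial of degree $k-1$, the defining orthogonality \eqref{eq:projection_l2_grad} of $\bm{\Pi}_{k-1}^0$ gives $(\bm{\Pi}_{k-1}^0\nabla p, \mu\nabla q_k)_E = (\nabla p, \mu\nabla q_k)_E$, which is exactly $a^E(p,q_k)$.

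For the coercivity \eqref{eq:ash:stability_1} I would not mimic the $L^2$ argument of Proposition~\ref{prop:stabm}: there the extra $h_E^2$ weight on the stabilisation cancels the $h_E^{-2}$ produced by Lemma~\ref{lemma_cont}, a cancellation that is unavailable here. Instead I would exploit the $L^2$-orthogonality of $\bm{\Pi}_{k-1}^0$ to split $a^E(p,p) = \mu_E(\|\bm{\Pi}_{k-1}^0\nabla p\|_{0,E}^2 + \|\bm{T}_{k-1}^0\nabla p\|_{0,E}^2)$, with $\mu_E$ the value of $\mu$ on $E$. Since $\nabla\Pi_k^\nabla p \in [\mathbb{P}_{k-1}(E)]^2$ and $\bm{\Pi}_{k-1}^0\nabla p$ is the best degree-$(k-1)$ $L^2$-approximation of $\nabla p$, one has $\|\bm{T}_{k-1}^0\nabla p\|_{0,E} \le |T_k^\nabla p|_{1,E}$; applying Lemma~\ref{lemma_coer} to $T_k^\nabla p$ then gives $\|\bm{T}_{k-1}^0\nabla p\|_{0,E}^2 \lesssim s^E(T_k^\nabla p, T_k^\nabla p)$, and combining the two terms yields $a^E(p,p) \lesssim \mu_E\|\bm{\Pi}_{k-1}^0\nabla p\|_{0,E}^2 + \mu_E s^E(T_k^\nabla p, T_k^\nabla p) = a_k^E(p,p)$. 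I expect this to be the main obstacle: Lemma~\ref{lemma_coer} is stated for functions of $V_k(E)$, whereas the remainder $T_k^\nabla p = p - \Pi_k^\nabla p$ need not lie in $V_k(E)$ on curved elements, since by property \textbf{(P1)} one has $\mathbb{P}_k(E)\nsubseteq V_k(E)$. The delicate point is therefore to justify that the stabilisation still controls the seminorm of this non-polynomial remainder, i.e. that the coercivity estimate extends to the kernel of $\Pi_k^\nabla$ inside $\mathbb{V}_k(E)$; this is precisely where the curved-edge geometry enters and must be handled with care.

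For the upper bound \eqref{eq:ash:stability_2} I would use that $T_k^\nabla$ annihilates $q_k$, so the stabilisation reduces to $s^E(T_k^\nabla p, T_k^\nabla p)$, while the consistency term is bounded by $\|\bm{\Pi}_{k-1}^0\nabla(p+q_k)\|_{0,E}^2 \le |p+q_k|_{1,E}^2$ because $\bm{\Pi}_{k-1}^0$ is an $L^2$-contraction. To the stabilisation I would apply Lemma~\ref{lemma_cont} to $T_k^\nabla p \in \mathbb{V}_k(E)$, obtaining $s^E(T_k^\nabla p, T_k^\nabla p) \lesssim h_E^{-2}\|T_k^\nabla p\|_{0,E}^2 + h_E^{2\varepsilon}|T_k^\nabla p|_{1+\varepsilon,E}^2$. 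Since the second defining condition of $\Pi_k^\nabla$ forces $(T_k^\nabla p, 1)_{\partial E} = 0$, a Poincaré inequality gives $\|T_k^\nabla p\|_{0,E} \lesssim h_E|T_k^\nabla p|_{1,E}$, absorbing the first term into $|T_k^\nabla p|_{1,E}^2$. Finally, seminorms are blind to constants, so $|T_k^\nabla p|_{1,E} = |(I-\Pi_k^\nabla)p|_{1,E}$; after a triangle inequality, the seminorm-reducing property of $\Pi_k^\nabla$, and a polynomial inverse estimate bounding the fractional seminorm of $\Pi_k^\nabla p$ by $h_E^{-\varepsilon}|p|_{1,E}$, everything can be re-expressed through $|(I-\Pi_0^\nabla)p|_{1,E} = |p|_{1,E}$ and $h_E^{2\varepsilon}|(I-\Pi_0^\nabla)p|_{1+\varepsilon,E}^2$, which gives \eqref{eq:ash:stability_2}. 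Throughout I would use the uniform upper and lower bounds on $\mu$ to pass between the $\mu_E$-weighted quantities and the plain seminorms, absorbing these bounds into the constants $\alpha_*,\alpha^*$.
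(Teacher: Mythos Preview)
Your proposal is correct and follows essentially the same route as the paper. The paper's own proof is extremely terse: for \eqref{eq:k-consistency} it simply says the identity follows from the definition of $a_k^E$, and for \eqref{eq:ash:stability_1}--\eqref{eq:ash:stability_2} it defers entirely to \cite[Proposition~3.13]{BeiraoRussoVacca_2019}, adding only the single observation
\[
\|\bm{\Pi}_{k-1}^0\nabla p\|_{0,E}^2 = \|\nabla p\|_{0,E}^2 - \|(I-\bm{\Pi}_{k-1}^0)\nabla p\|_{0,E}^2 \ge \|\nabla p\|_{0,E}^2 - \|\nabla(I-\Pi_k^\nabla)p\|_{0,E}^2 = \|\nabla\Pi_k^\nabla p\|_{0,E}^2,
\]
which is precisely your best-approximation step $\|\bm{T}_{k-1}^0\nabla p\|_{0,E}\le |T_k^\nabla p|_{1,E}$ read the other way. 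Your argument therefore amounts to spelling out in detail what the paper leaves to the reference, with the same key ingredients (Lemmas~\ref{lemma_cont}--\ref{lemma_coer}, the Pythagorean splitting, the Poincar\'e inequality for zero boundary mean, and the polynomial inverse estimate). The subtlety you flag---that Lemma~\ref{lemma_coer} is stated for $V_k(E)$ while $T_k^\nabla p$ lives only in $\mathbb{V}_k(E)$ on curved elements---is exactly the point the paper bypasses by invoking \cite{BeiraoRussoVacca_2019}, where the extension of the coercivity of $s^E$ to $\ker\Pi_k^\nabla\cap\mathbb{V}_k(E)$ is handled; your instinct that this is the crux on curved elements is correct.
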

\begin{proof}
Property \eqref{eq:k-consistency} follows from the definition of the bilinear form $a_k^E(\cdot,\cdot)$. The proofs of  inequalities  \eqref{eq:ash:stability_1}--\eqref{eq:ash:stability_2} can be obtained by following closely the proof of \cite[Proposition 3.13]{BeiraoRussoVacca_2019} and by noting that
\begin{align*}
\|\Pi^{k-1}_0 \nabla p \|_{0,E}^2 & = \| \nabla p \|_{0,E}^2  - \| (I-\Pi^{k-1}_0 ) \nabla p \|_{0,E}^2
\geq \| \nabla p \|_{0,E}^2  - \| \nabla (I-\Pi^{\nabla}_k) p \|_{0,E}^2
 = \| \nabla \Pi^\nabla_k p \|_{0,E}^2.
\end{align*}
\end{proof}

\subsection{Stability}
\label{subsec:stability}

{ We now address the stability analysis for the solution of Problem \ref{eq:semi_discr_weak_form}. 
First of all we
define the \emph{energy} norm}
\begin{align}
  \TNORM{p(t)}{h}^2
  =  m_k(\dot{p},\dot{p})(t) +
  a_k(p,p)(t)
  \qquad t\in[0,T],
  \label{eq:three-bar-norm}
\end{align}
which is defined for all $ p \in V_k(\Omega_h)$.
The local stability property of the bilinear forms $m_k(\cdot,\cdot)$
and $a_k(\cdot,\cdot)$ readily imply the  relation \begin{align}
\TNORM{p (t)}{}^2 = \| \rho^{1/2} \dot{p}(t) \|_0^2 + | \mu^{1/2} p(t) |_{1}^2 \lesssim  \TNORM{p(t)}{h}^2   \label{eq:energy:norm:equivalence}
\end{align}
for all time-dependent virtual element functions $p(t)$ with square
integrable derivative $\dot{p}(t)$.


\begin{theorem}\label{theorem:semi-discrete:stability}
  Let $f\in L^2( (0,T]; L^2(\Omega) )$ and let
  $p_h \in C^{2}((0,T]; V_k(\Omega_h))$ be the solution of Problem~\ref{eq:semi_discr_weak_form}.
  Then, it holds
  \begin{equation}
    \TNORM{p_h(t)}{h} \lesssim
    \TNORM{ (p_{0})_I }{h} + \int_0^t \NORM{ f(\tau) }{0,\Omega} d\tau.
  \end{equation}
\end{theorem}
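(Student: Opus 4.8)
The plan is to run the classical energy argument for second-order hyperbolic problems, testing the semi-discrete equation against the velocity $\dot{p}_h$ and recognizing the outcome as the time derivative of the energy norm \eqref{eq:three-bar-norm}. First I would take $v_h = \dot{p}_h(t) \in V_k(\Omega_h)$ in Problem~\ref{eq:semi_discr_weak_form}, which is admissible because $p_h \in C^2((0,T];V_k(\Omega_h))$, obtaining
\[
m_k(\ddot{p}_h,\dot{p}_h) + c(\dot{p}_h,\dot{p}_h) + a_k(p_h,\dot{p}_h) = F_k(\dot{p}_h).
\]
Exploiting the symmetry of $m_k(\cdot,\cdot)$ and $a_k(\cdot,\cdot)$ (inherited from the symmetric projections and the symmetric stabilization $s^E$), the first and third terms combine as $m_k(\ddot{p}_h,\dot{p}_h) + a_k(p_h,\dot{p}_h) = \tfrac{1}{2}\frac{d}{dt}\big[m_k(\dot{p}_h,\dot{p}_h) + a_k(p_h,p_h)\big] = \tfrac{1}{2}\frac{d}{dt}\TNORM{p_h(t)}{h}^2$.

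Next I would discard the dissipative boundary contribution: since $\rho>0$, we have $c(\dot{p}_h,\dot{p}_h) = \NORM{\rho^{1/2}\dot{p}_h}{0,\Gamma_A}^2 \geq 0$, so that $\tfrac{1}{2}\frac{d}{dt}\TNORM{p_h(t)}{h}^2 \leq F_k(\dot{p}_h)$. To control the right-hand side I would use the definition of $F_k$, the element-wise $L^2$-stability of $\Pi_k^0$, and a discrete Cauchy--Schwarz inequality over the elements, giving
\[
|F_k(\dot{p}_h)| \leq \NORM{f}{0,\Omega}\,\NORM{\Pi_k^0\dot{p}_h}{0,\Omega} \leq \NORM{f}{0,\Omega}\,\NORM{\dot{p}_h}{0,\Omega}.
\]
The norm equivalence \eqref{eq:energy:norm:equivalence}, together with $\rho$ being bounded away from zero, then yields $\NORM{\dot{p}_h}{0,\Omega} \lesssim \TNORM{p_h(t)}{h}$, whence $\tfrac{1}{2}\frac{d}{dt}\TNORM{p_h(t)}{h}^2 \lesssim \NORM{f(t)}{0,\Omega}\,\TNORM{p_h(t)}{h}$.

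Finally I would turn this differential inequality into the claimed bound. Writing $E(t) = \TNORM{p_h(t)}{h}$ and $g(t) = C\NORM{f(t)}{0,\Omega}$, the estimate reads $\tfrac{d}{dt}E(t)^2 \leq 2\,g(t)\,E(t)$; dividing by $E(t)$ where it is positive gives $\tfrac{d}{dt}E(t) \leq g(t)$, and integrating from $0$ to $t$ produces $E(t) \lesssim E(0) + \int_0^t \NORM{f(\tau)}{0,\Omega}\,d\tau$, with $E(0) = \TNORM{(p_0)_I}{h}$ by the discrete initial condition. The one point requiring care is the division by $E(t)$ at instants where the energy vanishes: I would make this rigorous by working with the regularized quantity $E_\delta(t) = \sqrt{E(t)^2 + \delta}$ for $\delta>0$, which satisfies $E_\delta'(t) = \tfrac{(E^2)'(t)}{2E_\delta(t)} \leq \tfrac{g(t)E(t)}{E_\delta(t)} \leq g(t)$, deriving the bound uniformly in $\delta$, and letting $\delta \to 0$. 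This smoothing step, rather than any of the algebraic manipulations, is the only genuine obstacle in the argument.
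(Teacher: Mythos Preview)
Your proposal is correct and follows essentially the same energy argument as the paper: test with $\dot{p}_h$, use the symmetry of $m_k$ and $a_k$ to recognize the time derivative of $\TNORM{p_h}{h}^2$, drop the nonnegative absorbing term, and bound $F_k(\dot{p}_h)$ via Cauchy--Schwarz and \eqref{eq:energy:norm:equivalence}. The only cosmetic difference is in closing the estimate: the paper integrates first and then invokes Gronwall's lemma, whereas you work directly with the differential inequality $\tfrac{d}{dt}E^2 \le 2gE$ and use the $\sqrt{E^2+\delta}$ regularization to divide through---both routes are standard and yield the same bound.
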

\begin{proof}
  We substitute $v=\dot{p}_{h}(t)$
  in Problem~\ref{eq:semi_discr_weak_form} and, for all $t\in(0,T]$, we obtain
  \begin{align}
    m_k(\ddot{p}_{h},\dot{p}_{h})+ a_k(p_h,\dot{p}_{h}) +\ilario{ c(\dot{p}_h,\dot{p}_h)} =F_k(\dot{p}_{h}).
    \label{eq:LHS}
  \end{align}
  Since both $m_k(\cdot,\cdot)$ and $a_k(\cdot,\cdot)$ are symmetric
  bilinear forms, a straightforward calculation yields
  \begin{align*}
    \frac{1}{2}\frac{d}{dt}\big( m_k(\dot{p}_h,\dot{p}_h) + a_k(p_h,p_h) \big) =
    m_k(\ddot{p}_{h},\dot{p}_{h})+a_k(p_h,\dot{p}_h) .
  \end{align*}
  We substitute this expression in the left-hand side
  of~\eqref{eq:LHS}, we observe that $\ilario{ c(\dot{p}_h,\dot{p}_h)} \geq 0 $,  we integrate in time the resulting equation from
  $0$ to the intermediate time $t$, and using the definition of norm
  $\TNORM{\,\cdot\,}{h}$ in~\eqref{eq:three-bar-norm}, we find
  that
  \begin{align*}
    \TNORM{p_h(t)}{h}^2
    &\lesssim
    m_k(\dot{p}_{h}(t),\dot{p}_{h}(t)) + a_k(p_h(t),p_h(t))
    \\
    &= m_k(\dot{p}_{h}(0),\dot{p}_{h}(0)) + a_k(p_h(0),p_h(0))
     +  2\int_{0}^{t}F_k(\dot{p}_{h}(\tau))d\tau
    \\
    &\lesssim
    \TNORM{p_h(0)}{h}^2
    +\int_{0}^{t}F_k(\dot{p}_{h}(\tau))d\tau.
  \end{align*}
  Using that $(p_h(0),\dot{p}_h(0))=((p_{0})_{I},(p_{1})_{I})$, and  the  Cauchy-Schwarz inequality, we find that
  \begin{align*}
    \TNORM{p_h(t)}{h}^2
    \lesssim\TNORM{ (p_{0})_{I}}{h}^2 +  \int_0^t \NORM{f(\tau)}{0}\,\|\dot{p}_{h}(\tau)\|_{0}\,d\tau.
  \end{align*}
  The thesis follows on applying \eqref{eq:energy:norm:equivalence} and the Gronwall's Lemma~\cite[Lemma~A5,~p.~157]{Brezis:1973}.
\end{proof}

\subsection{Convergence analysis}

{
The aim of the present subsection is to show the convergence property of the proposed scheme.
We start our analysis recalling a classical approximation result for polynomials on star-shaped domains, see for instance \cite{brenner-scott:book}. 
}

\begin{lemma}
Let $E \in \Omega_h$, and let two real non-negative numbers $r,s$ with $r \leq s \leq k+1$. Then for all $p \in H^s(E)$, there exists a polynomial function $p_\pi \in \mathbb{P}_k(E)$ such that
\begin{align}
|p-p_\pi|_{r,E} & \leq C h_E^{s-r} |p|_{s,E},\label{eq:stima_pi}
\\
\|p-p_\pi\|_{0,E} & \leq C h_E^{s} \|p\|_{s,E},\label{eq:stima_pi_0}
\end{align}
with $C$ depending only on the polynomial degree $k$ and the shape regularity constant $\varrho$.
\end{lemma}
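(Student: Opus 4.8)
The plan is to realize the approximant $p_\pi$ as the \emph{averaged Taylor polynomial} of $p$ of degree $k$, and then to invoke the Bramble--Hilbert lemma in its standard form, cf.~\cite{brenner-scott:book}; the argument is entirely classical, so the only real work is to track the dependence of the constant on $h_E$ and on the shape-regularity parameter $\varrho$. First I would fix the ball $B_E$ of radius $\geq \varrho h_E$ with respect to which $E$ is star-shaped (assumption \textbf{A1}), pick a smooth cut-off $\phi$ supported in $B_E$ with $\int_{B_E}\phi = 1$ and $\|\phi\|_\infty \lesssim h_E^{-2}$, and set
\begin{equation*}
p_\pi(x) = Q^k p(x) := \int_{B_E} \Big(\sum_{|\beta|\leq k} \frac{D^\beta p(y)}{\beta!}(x-y)^\beta\Big)\,\phi(y)\,dy .
\end{equation*}
By construction $p_\pi \in \mathbb{P}_k(E)$, and $Q^k$ reproduces polynomials, i.e.\ $Q^k q = q$ for every $q \in \mathbb{P}_k(E)$. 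These are precisely the two properties that the Bramble--Hilbert machinery requires.

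Next I would establish the seminorm estimate \eqref{eq:stima_pi}. For integer $r \le s \le k+1$ this is the classical estimate for the averaged Taylor polynomial on a domain star-shaped with respect to a ball: $|p - Q^k p|_{r,E} \le C\, h_E^{s-r}\,|p|_{s,E}$, where $C$ depends only on $k$, on the space dimension (here fixed to $2$), and on the \emph{chunkiness parameter} of $E$, namely the ratio between $h_E$ and the radius of $B_E$. Assumption \textbf{A1} bounds this ratio by $\varrho^{-1}$, which yields $C = C(k,\varrho)$ as claimed. For non-integer $r$ or $s$ the same bound follows by interpolation between the integer-order estimates, or equivalently from the fractional version of the lemma. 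The power of $h_E$ is obtained in the usual way: one maps $E$ to a reference configuration of unit diameter, applies the estimate there with an $h$-independent constant, and transforms back, each derivative of order $j$ contributing a factor $h_E^{-j}$.

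Finally, the $L^2$ estimate \eqref{eq:stima_pi_0} follows at once by taking $r=0$ in \eqref{eq:stima_pi} and bounding the seminorm by the full norm, $|p|_{s,E}\le \|p\|_{s,E}$, which gives $\|p - p_\pi\|_{0,E}\le C\,h_E^{s}\,\|p\|_{s,E}$. The only delicate point of the whole argument, and the step I would treat most carefully, is ensuring that the constant stays uniform along the mesh sequence: this is exactly where \textbf{A1} enters, guaranteeing that the averaging ball $B_E$ does not degenerate relative to $h_E$ and hence that the scaling argument produces a constant depending on $E$ only through $k$ and $\varrho$. I would also note that, since the edges of $E$ are possibly curved (assumption \textbf{A3}), one should check that the classical result applies verbatim to such $E$; it does, because the averaged Taylor construction and the Bramble--Hilbert estimate only use that $E$ is star-shaped with respect to $B_E$, a property insensitive to the regularity of $\partial E$.
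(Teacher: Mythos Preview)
Your proposal is correct and is precisely the argument underlying the reference the paper invokes: the paper does not give a proof at all, but merely states the lemma as ``a classical approximation result for polynomials on star-shaped domains'' and cites \cite{brenner-scott:book}. Your averaged Taylor polynomial construction, the use of \textbf{A1} to bound the chunkiness parameter uniformly by $\varrho^{-1}$, and the scaling argument are exactly the standard proof from that reference; the remark that curvature of $\partial E$ is irrelevant because only star-shapedness with respect to $B_E$ is used is a nice clarification that the paper leaves implicit.
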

%
{
We now mention the following result concerning the optimal order of accuracy in $H^1$ or higher order norms for the virtual space $V_k$ (the proof follows combining Theorem 3.7 in \cite{BeiraoRussoVacca_2019} and Theorem 11 in \cite{apos}).

\begin{lemma}
Let any real number $\varepsilon \in [0,1/2)$ and
 $p \in H^s(\Omega) \cap V$, with $1+\varepsilon < \frac{3}{2} \leq s \leq k+1$. Then there exists a virtual element function $p_I \in V_k$ such that
\begin{align}
\sum_{E\in\Omega_h}|p-p_I|_{1+\varepsilon,E} & \leq C h_E^{s-1-\varepsilon} \|p\|_{s},\label{eq:stima_I}
\end{align}
with $C$ depending on the polynomial degree $k$, the shape regularity constant $\varrho$ and the parametrization $\gamma$.
\end{lemma}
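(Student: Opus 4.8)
The plan is to establish the interpolation estimate \eqref{eq:stima_I} by combining the local approximation result stated just before it with a careful treatment of the curved edges, following the strategy of \cite{BeiraoRussoVacca_2019} and \cite{apos}. First I would construct the interpolant $p_I \in V_k$ elementwise: on each $E \in \Omega_h$, I would define the boundary data of $p_I$ on $\partial E$ by interpolating $p$ at the degrees of freedom described in property \textbf{(P2)} — namely the vertex values, the values at the Gauss--Lobatto points mapped through $\gamma$, and the internal scaled moments up to order $k-2$. Because these nodal and moment functionals are globally shared across adjacent elements, the resulting function is automatically continuous across edges (straight or curved) and hence belongs to the global space $V_k(\Omega_h) \subset V$; this is precisely the point where the exact parametrization $\gamma$ enters, and where the dependence of the constant $C$ on $\gamma$ originates.

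Next I would bound the local interpolation error $|p - p_I|_{1+\varepsilon,E}$. The natural route is the triangle inequality $|p-p_I|_{1+\varepsilon,E} \leq |p - p_\pi|_{1+\varepsilon,E} + |p_\pi - p_I|_{1+\varepsilon,E}$, where $p_\pi \in \mathbb{P}_k(E)$ is the polynomial best-approximation furnished by \eqref{eq:stima_pi}. The first term is controlled directly by \eqref{eq:stima_pi} with $r = 1+\varepsilon$ and $s \leq k+1$, giving the factor $h_E^{s-1-\varepsilon}$. For the second term I would note that $p_\pi - p_I \in V_k(E)$ (using $\mathbb{P}_0(E)\subseteq V_k(E)$ from \textbf{(P1)} together with linearity), so an inverse-type / stability estimate for the virtual interpolation operator lets me pass from the fractional $H^{1+\varepsilon}$ seminorm to a controllable quantity, ultimately bounded again in terms of $h_E^{s-1-\varepsilon}\|p\|_{s,E}$ after inserting and removing $p$. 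Summing over all $E\in\Omega_h$ then yields \eqref{eq:stima_I}.

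The main obstacle is the fractional-order seminorm $|\cdot|_{1+\varepsilon,E}$ on the \emph{curved} element. On straight polygons one has standard Scott--Dupont and inverse estimates, but here the interpolation at the mapped Gauss--Lobatto points and the control of the edge contributions require the $C^{m+1}$ regularity of $\gamma$ from assumption \textbf{A3} and the non-degeneracy guaranteed by \textbf{A1}--\textbf{A2}. Concretely, the trace and edge-interpolation estimates must be transported through $\gamma$ and $\gamma^{-1}$, and one has to verify that the Jacobians of these maps are uniformly bounded above and below in terms of $h_E$ and $\varrho$; this is exactly the technical content behind Theorem 11 in \cite{apos} and is what makes the constant depend on the parametrization. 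Once this transport is in place, the fractional seminorm on $E$ can be handled by interpolation between the integer-order spaces $H^1$ and $H^2$ (equivalently, by a fractional Sobolev embedding compatible with the $\frac{3}{2} \leq s$ hypothesis, which ensures $p\in H^s(\Omega)$ has a well-defined trace and the nodal functionals in \textbf{(P2)} are continuous).

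A cleaner alternative, which I would adopt if the direct fractional estimate proves cumbersome, is to invoke \eqref{eq:stima_I} as a consequence rather than proving it from scratch: since the statement itself says the proof follows by combining Theorem 3.7 of \cite{BeiraoRussoVacca_2019} (which gives the $H^1$ interpolation estimate, i.e. the case $\varepsilon = 0$, on curved VEM spaces) with Theorem 11 of \cite{apos} (which supplies the fractional-order refinement), I would structure the argument as an interpolation-space tensorization of those two results. The role of the constraint $1+\varepsilon < \frac{3}{2}$ is to keep $\varepsilon < 1/2$, matching the range in Lemmas~\ref{lemma_cont}--\ref{lemma_coer} and guaranteeing that the fractional seminorm on curved edges remains well-behaved under the map $\gamma$; losing this would break the trace estimates that glue the local interpolants into a global $V$-conforming function.
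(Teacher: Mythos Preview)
The paper does not actually prove this lemma: the sentence immediately preceding it states that ``the proof follows combining Theorem 3.7 in \cite{BeiraoRussoVacca_2019} and Theorem 11 in \cite{apos}'', and no argument is given. Your final paragraph, where you propose to invoke those two external results directly, is therefore exactly what the paper does, and that part of your proposal is correct.

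Your first, more detailed route has a genuine gap. You write that $p_\pi - p_I \in V_k(E)$ ``using $\mathbb{P}_0(E)\subseteq V_k(E)$ from \textbf{(P1)} together with linearity'', and then plan to apply an inverse-type estimate for virtual functions. But \textbf{(P1)} says explicitly that in general $\mathbb{P}_k(E)\nsubseteq V_k(E)$ on curved elements; only constants are guaranteed to lie in the local space. Hence $p_\pi \in \mathbb{P}_k(E)$ is typically \emph{not} a virtual function, $p_\pi - p_I \notin V_k(E)$, and the stability/inverse estimates for $V_k(E)$ (such as Lemma~\ref{lemma_coer}) do not apply to it. This is precisely the structural difficulty of the curved case and the reason the paper defers to the dedicated analyses in \cite{BeiraoRussoVacca_2019} and \cite{apos}, where the interpolant is compared not to a polynomial but to carefully constructed functions whose edge traces lie in $\widetilde{\mathbb{P}}_k(e)$. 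Your triangle-inequality decomposition through $p_\pi$ therefore does not close as written.
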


We extend the result in the previous lemma by proving that the virtual space on curved elements $V_k$  has the optimal approximation order also in  $L^2$-norm.

\begin{lemma}
Let $p \in H^s(\Omega) \cap V$, with $\frac{3}{2} \leq s \leq k+1$. Then there exists a virtual element function $p_I \in V_k$ such that
\begin{align}
\sum_{E\in\Omega_h}\|p-p_I\|_{0,E} & \leq C h_E^{s} \|p\|_{s}\label{eq:stima_I_0},
\end{align}
with $C$ depending on the polynomial degree $k$, the shape regularity constant $\varrho$ and the parametrization $\gamma$.
\end{lemma}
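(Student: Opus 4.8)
The plan is to localize the estimate on each $E \in \Omega_h$ and to compare the virtual interpolant $p_I$ with the polynomial $p_\pi \in \mathbb{P}_k(E)$ furnished by \eqref{eq:stima_pi}--\eqref{eq:stima_pi_0}. Writing $p - p_I = (p-p_\pi) - (p_I - p_\pi)$ and using the triangle inequality, the first contribution $\|p-p_\pi\|_{0,E}$ is immediately bounded by $C h_E^s\|p\|_{s,E}$ via \eqref{eq:stima_pi_0}, so the whole estimate reduces to controlling the second term $\|p_I - p_\pi\|_{0,E}$, where now $p_I - p_\pi \in \V_k(E)$ is a difference of computable objects.

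For this term I would invoke the $L^2$--stabilization equivalence in the spirit of the lower bound \eqref{stability2_S_L2} of Lemma~\ref{lemma_coer} (which extends from $V_k(E)$ to $\V_k(E)$, the degrees of freedom being unisolvent on the enlarged space), obtaining $\|p_I - p_\pi\|_{0,E}^2 \lesssim h_E^2\, s^E(p_I - p_\pi, p_I - p_\pi) = h_E^2 \sum_{i} \bigl({\rm dof}_i(p_I - p_\pi)\bigr)^2$. The key algebraic observation is that $p_I$ is constructed so as to share all the degrees of freedom of $p$, hence ${\rm dof}_i(p_I - p_\pi) = {\rm dof}_i(p - p_\pi)$ for every $i$, and the task becomes estimating the degrees of freedom of the \emph{polynomial approximation error} $p-p_\pi$.

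I would then split the sum according to the three families of degrees of freedom of \textbf{(P2)}. The interior scaled moments $|E|^{-1}(p-p_\pi, m_i)_E$, with $m_i\in\mathcal{M}_{k-2}(E)$, are handled by Cauchy--Schwarz together with $\|m_i\|_{0,E}\lesssim |E|^{1/2}$, so that each contributes $\lesssim |E|^{-1}\|p-p_\pi\|_{0,E}^2$; multiplied by the prefactor $h_E^2\sim |E|$ this gives exactly $\lesssim \|p-p_\pi\|_{0,E}^2\lesssim h_E^{2s}\|p\|_{s,E}^2$ by \eqref{eq:stima_pi_0}. The vertex and Gauss--Lobatto degrees of freedom are instead point evaluations on $\partial E$: these I would bound by pulling each (possibly curved) edge back to the reference segment $\mathfrak{e}$ through $\gamma$, and then combining a scaled trace inequality on $E$ with the one-dimensional Sobolev embedding $H^{s-1/2}(\mathfrak{e})\hookrightarrow C^0(\overline{\mathfrak{e}})$ to control $\|p-p_\pi\|_{L^\infty(\partial E)}$. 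This is precisely where the hypothesis $s\ge 3/2$ is used (it makes the point values meaningful and the embedding available) and where the constant acquires its dependence on the parametrization $\gamma$.

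The hard part will be this last, boundary, step: point evaluations are not controlled by $L^2$-type quantities alone, so the borderline Sobolev embedding on the edges must be exploited with the correct $h_E$-scaling, and the curvature forces all scaling arguments to be performed on $\mathfrak{e}$ and transported back through $\gamma$, which is the source of the $\gamma$-dependence of $C$. Once the three families of degrees of freedom are estimated and collected, one gets $\|p_I - p_\pi\|_{0,E}\lesssim h_E^{s}\|p\|_{s,E}$; adding the bound for $\|p-p_\pi\|_{0,E}$ and summing over $E\in\Omega_h$ then yields \eqref{eq:stima_I_0}.
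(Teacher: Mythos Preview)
Your argument has a genuine gap at the step where you invoke the lower bound \eqref{stability2_S_L2} on the function $p_I - p_\pi \in \V_k(E) = V_k(E) + \mathbb{P}_k(E)$. Lemma~\ref{lemma_coer} is stated and proved only for functions in $V_k(E)$, and your claim that it ``extends from $V_k(E)$ to $\V_k(E)$, the degrees of freedom being unisolvent on the enlarged space'' is false precisely in the curved setting that is the whole point of this paper. By property \textbf{(P1)} one has $\mathbb{P}_k(E) \nsubseteq V_k(E)$ whenever $E$ has a genuinely curved edge, so $\dim \V_k(E) > \dim V_k(E)$, while the number of degrees of freedom equals $\dim V_k(E)$; hence the DoFs cannot be unisolvent on $\V_k(E)$. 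Concretely, for any $q \in \mathbb{P}_k(E) \setminus V_k(E)$ the function $v = q - q_I \in \V_k(E)$ is nonzero but has all degrees of freedom equal to zero, so $s^E(v,v)=0$ while $\|v\|_{0,E}>0$, and the inequality $\|v\|_{0,E}^2 \lesssim h_E^2\, s^E(v,v)$ fails. This breaks the inverse estimate $\|p_I - p_\pi\|_{0,E}^2 \lesssim h_E^2 \sum_i\bigl({\rm dof}_i(p-p_\pi)\bigr)^2$ on which the rest of your argument is built. (On straight elements $\V_k(E)=V_k(E)$ and your approach is the standard one; it is exactly the curvature that spoils it.)

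The paper sidesteps this issue entirely. It uses a Poincar\'e inequality on $E$,
\[
\|p-p_I\|_{0,E} \lesssim h_E^{1/2}\|p-p_I\|_{0,\partial E} + h_E\, |p-p_I|_{1,E},
\]
bounds the boundary term by a one-dimensional interpolation estimate on each edge (available because $p_I$ matches $p$ at the mapped Gauss--Lobatto nodes, so after pulling back through $\gamma$ this is classical polynomial interpolation on $\mathfrak{e}$), and bounds the $H^1$ term by the already proved estimate \eqref{eq:stima_I}. No inverse inequality on $\V_k(E)$ is required.
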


\begin{proof}
We only sketch the proof since it follows the guidelines of Theorem 3.7 in \cite{BeiraoRussoVacca_2019}.
We preliminary observe that the function $p_I \in V_k$ that realizes \eqref{eq:stima_I} can be chosen in such a way $p(z) - p_I(z) = 0$ for each vertex/edge node $z$  (cf. the fist two items in \textbf{(P2)}).
Therefore employing Lemma 3.2 in \cite{BeiraoRussoVacca_2019} it holds that
\begin{equation}
\label{eq:inter-edge}
\|p - p_I\|_{0,e} \lesssim h_e^{s-1/2} \|p\|_{s-1/2, e} 
\qquad \text{for any mesh edge $e$.}
\end{equation}
For any $E\in \Omega_h$, from the Poincar\'e inequality we infer
\begin{align*}
\| p - p_I \|_{0,E} & \lesssim | \int_{\partial E} (p - p_I) \, ds | + h_E |p - p_I|_{1,E}  
\lesssim h_E^{\frac{1}{2}} \| p- p_I \|_{0,\partial E} + h_E |p - p_I |_{1,E} \,.
\end{align*}
Then the above equation and \eqref{eq:inter-edge} imply
\begin{align*}
\| p-p_I \|_{0,E} & \lesssim \sum_{e \in \partial E} h_e^{\frac{1}{2}} \| p-p_I \|_{0,e} + h_E | p-p_I |_{1,E} \\
& \lesssim \sum_{e \in \partial E}h_e^{s} \| p \|_{s-\frac{1}{2},e} + h_E | p-p_I |_{1,E} \\
& \lesssim h_e^{s} \| p \|_{s,E} + h_E | p-p_I |_{1,E},
\end{align*}
where in the last step we have use the trace inequality. The proof is completed by summing over all the mesh element $E$ and using  \eqref{eq:stima_I}. 
\end{proof}
}

{In the following for the sake of presentation we consider the case $\Gamma_A = \emptyset$, i.e., $c(\cdot, \cdot) = 0$. \toCheck{The general case  can be obtained in a similar way}.}

\begin{theorem}\label{theorem:semi-discrete:convergence}
  Let $s\in\mathbb{N}$ and $p\in C^2 \big( (0,T]; H^{s+1}(\Omega) \big)$,
  be the exact solution of problem~\eqref{eq:weak_form}.
  Let $p_h\in V_k$ be the solution of the semi-discrete
  problem~\eqref{eq:semi_discr_weak_form}.
  Then, under the mesh regularity
  assumptions of \textbf{A1}--\textbf{A3}, 
   for all $t\in[0,T]$, { all piecewise polynomials
  $p_{\pi}(t) \in \mathbb{P}_k(\Omega_h)$ and all interpolant functions $p_I(t) \in V_k$ approximating $p(t)$}, it holds 
  \begin{align}
    \TNORM{p(t)-p_h(t)}{}^2\lesssim
    \sup_{\tau\in[0,T]}\mathcal{H}_0^2(\tau)
    +\int_{0}^{t}\mathcal{H}_1^2(\tau)d\tau,
    \label{eq:semi-discrete:convergence}
  \end{align}
  where
  \begin{align}
    \mathcal{H}_0^2(\tau) &= \TNORM{{p}(\tau)-{p}_\pi(\tau)}{}^2 + m_k(\dot{p}_I(\tau)-\dot{p}_{\pi}(\tau),\dot{p}_I(\tau)-\dot{p}_{\pi}(\tau)) \nonumber  \\&\quad \quad+ a_k(p_I(\tau)-p_{\pi}(\tau),p_I(\tau)-p_{\pi}(\tau))
    \label{eq:semi-discrete:convergence:G0}
    \\
    \mathcal{H}_1^2(\tau) &=
 \TNORM{\dot{p}(\tau)- \dot{p}_\pi(\tau)}{}^2 + m_k(\ddot{p}_I(\tau)-\ddot{p}_{\pi}(\tau),\ddot{p}_I(\tau)-\ddot{p}_{\pi}(\tau)) \nonumber \\&\quad \quad+ a_k(\dot{p}_I(\tau)-\dot{p}_{\pi}(\tau),\dot{p}_I(\tau)-\dot{p}_{\pi}(\tau))  +
    \left(\sup_{p_h\in\mathbb{V}_k(\Omega_h) \backslash{\{0\}}} \frac{ | F(p_h) - F_k(p_h) | }{ |p_h|_1 }\right)^2 .
    \label{eq:semi-discrete:convergence:G1}
  \end{align}
\end{theorem}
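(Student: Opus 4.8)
The plan is to adapt the classical energy argument for semi-discrete second–order hyperbolic problems (as already used for the stability estimate in Theorem~\ref{theorem:semi-discrete:stability}) to the non-conforming virtual element setting, the non-conformity being absorbed through the $k$-consistency and stability properties of Propositions~\ref{prop:stabm} and~\ref{prop:staba}. First I would write $p-p_h=(p-p_I)+(p_I-p_h)$ and set $\delta:=p_I-p_h\in V_k$. By the triangle inequality and the norm equivalence~\eqref{eq:energy:norm:equivalence} it suffices to bound $\TNORM{\delta}{h}$, since the interpolation contribution splits as $\TNORM{p-p_I}{}\le\TNORM{p-p_\pi}{}+\TNORM{p_I-p_\pi}{}$, whose first term is the leading term of $\mathcal{H}_0$ and whose second term obeys $\TNORM{p_I-p_\pi}{}^2\lesssim m_k(\dot p_I-\dot p_\pi,\dot p_I-\dot p_\pi)+a_k(p_I-p_\pi,p_I-p_\pi)$, i.e.\ the remaining two terms of $\mathcal{H}_0$ in~\eqref{eq:semi-discrete:convergence:G0}.

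Next I would derive the error equation for $\delta$. Testing Problem~\ref{eq:semi_discr_weak_form} with $v_h\in V_k$ gives
\begin{align*}
m_k(\ddot\delta,v_h)+a_k(\delta,v_h)=m_k(\ddot p_I,v_h)+a_k(p_I,v_h)-F_k(v_h).
\end{align*}
I would then insert $p_\pi$, use the $k$-consistency identities~\eqref{eq:msh:k-consistency} and~\eqref{eq:k-consistency} (together with symmetry) to replace $m_k(\ddot p_\pi,v_h)$ and $a_k(p_\pi,v_h)$ by $m(\ddot p_\pi,v_h)$ and $a(p_\pi,v_h)$, and finally subtract the exact weak equation $m(\ddot p,v_h)+a(p,v_h)=F(v_h)$ (here $c\equiv0$ because $\Gamma_A=\emptyset$) to obtain
\begin{align*}
m_k(\ddot\delta,v_h)+a_k(\delta,v_h)=m_k(\ddot p_I-\ddot p_\pi,v_h)+a_k(p_I-p_\pi,v_h)+m(\ddot p_\pi-\ddot p,v_h)+a(p_\pi-p,v_h)+(F-F_k)(v_h).
\end{align*}

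Choosing $v_h=\dot\delta$ and exploiting the symmetry of $m_k$ and $a_k$, the left-hand side becomes $\tfrac12\frac{d}{dt}\TNORM{\delta}{h}^2$; I would then integrate in time from $0$ to $t$. The key manoeuvre is to integrate the two stiffness-type terms $a_k(p_I-p_\pi,\dot\delta)$ and $a(p_\pi-p,\dot\delta)$ by parts in time, which moves the time derivative off $\delta$ and produces boundary contributions $a_k(p_I-p_\pi,\delta)\big|_{0}^{t}$ and $a(p_\pi-p,\delta)\big|_{0}^{t}$ — bounded through Cauchy–Schwarz in the forms by $\sup_\tau\mathcal{H}_0(\tau)\,\TNORM{\delta(t)}{h}$ — plus the residual integrals $\int_0^t a_k(\dot p_I-\dot p_\pi,\delta)\,d\tau$ and $\int_0^t a(\dot p_\pi-\dot p,\delta)\,d\tau$, which feed the $\mathcal{H}_1$ integrand. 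The mass-type terms $m_k(\ddot p_I-\ddot p_\pi,\dot\delta)$ and $m(\ddot p_\pi-\ddot p,\dot\delta)$ are estimated directly (no integration by parts) by Cauchy–Schwarz, recognizing $\NORM{\rho^{1/2}(\ddot p-\ddot p_\pi)}{0}\le\TNORM{\dot p-\dot p_\pi}{}$, while the loading inconsistency $(F-F_k)(\dot\delta)$ is bounded by the supremum appearing in~\eqref{eq:semi-discrete:convergence:G1} (see the discussion below); the remaining $\delta$-factors are reduced to $\TNORM{\delta}{h}$ through~\eqref{eq:energy:norm:equivalence}. Collecting terms yields an inequality of the form $\TNORM{\delta(t)}{h}^2\lesssim\TNORM{\delta(0)}{h}^2+\sup_\tau\mathcal{H}_0^2+\int_0^t\mathcal{H}_1(\tau)\TNORM{\delta(\tau)}{h}\,d\tau$, to which I would apply the same Gronwall lemma as in Theorem~\ref{theorem:semi-discrete:stability}; choosing $p_I$ so that $\TNORM{\delta(0)}{h}$ is absorbed into $\sup_\tau\mathcal{H}_0$ and reassembling the triangle inequality gives~\eqref{eq:semi-discrete:convergence}.

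The hard part will be the careful bookkeeping of the integration-by-parts-in-time step, ensuring that each right-hand contribution is routed either into the pointwise-in-time group $\sup_\tau\mathcal{H}_0$ (the boundary terms, measuring the approximation of $p$ and $\dot p$ at the current time in the $m_k$- and $a_k$-norms) or into the time-integral group $\int_0^t\mathcal{H}_1$ (the integrands, carrying one extra time derivative), so that Gronwall applies without a leftover integral of $\TNORM{\delta}{h}$ in a norm not controlled by the energy. A closely related delicacy is the treatment of the loading-consistency functional $F-F_k$: it must be controlled purely through the $|\cdot|_1$-supremum of~\eqref{eq:semi-discrete:convergence:G1}, which in turn relies on the stabilization estimates of Lemmas~\ref{lemma_cont}–\ref{lemma_coer} to dominate $(I-\Pi_k^0)\dot\delta$ by the discrete energy. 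Finally, since $m_k\neq m$ and $a_k\neq a$ on virtual functions, every passage between a discrete and a continuous form must be routed through the polynomial $p_\pi$ via the consistency identities, which is precisely why the estimate is stated simultaneously over all $p_\pi\in\mathbb{P}_k(\Omega_h)$ and all interpolants $p_I\in V_k$.
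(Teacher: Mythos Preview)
Your proposal is correct and mirrors the paper's proof almost step for step: the same splitting $p-p_h=(p-p_I)+e_h$ with $e_h:=p_I-p_h$, the same error equation obtained by routing through $p_\pi$ via the $k$-consistency identities, the same choice $v_h=\dot e_h$, the same integration by parts in time of the $a$- and $a_k$-contributions (producing the boundary term $\TERM{R}{4}$ that feeds $\mathcal H_0$ and the integrands $\TERM{R}{2},\TERM{R}{3}$ that feed $\mathcal H_1$), and the same Gronwall closure. The only minor refinement is that in the paper the initial contribution vanishes exactly, since $p_h(0)=(p_0)_I=p_I(0)$ and $\dot p_h(0)=(p_1)_I=\dot p_I(0)$ give $e_h(0)=\dot e_h(0)=0$, so no absorption of $\TNORM{\delta(0)}{h}$ into $\sup_\tau\mathcal H_0$ is needed.
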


\begin{proof}
We start by observing that from triangle inequality and  \eqref{eq:energy:norm:equivalence} it holds
  $$\TNORM{p(t)-p_h(t)}{}^2\leq\TNORM{p(t)-p_I(t)}{}^2+\TNORM{p_I(t)-p_h(t)}{h}^2.$$
 We bound the first term  by adding and subtracting $p_\pi$, using the definition of the energy norm \eqref{eq:three-bar-norm} and Propostions~\ref{prop:stabm}~and~\ref{prop:staba} as follows
 \begin{align}
 \TNORM{p(t)-p_I(t)}{}^2 & \leq \TNORM{p(t)-p_\pi(t)}{}^2 + \TNORM{p_\pi(t)-p_I(t)}{h}^2  \nonumber \\
 &\leq \TNORM{p(t)-p_\pi(t)}{}^2 +
 m_k(\dot{p}_I(t)-\dot{p}_{\pi}(t),\dot{p}_I(t)-\dot{p}_{\pi}(t)) 
 \nonumber  \\ & \quad \quad 
 + a_k(p_I(t)-p_{\pi}(t),p_I(t)-p_{\pi}(t)) \,. \label{eq:proof:energy:00}
 \end{align}
Next, we focus on the term $\TNORM{p_\pi(t)-p_I(t)}{h}$ and consider the following error equation
  \begin{align}
    m(\ddot{p}(t),v_h) - m_k(\ddot{p}_{h}(t),v_h)
    + a(p(t),v_h) - a(p_{h}(t),v_h)
    = F(v_h) - F_k(v_h),
    \label{eq:proof:energy:05}
  \end{align}
  which holds for all $v_h\in V_k$.
  Next, we rewrite this equation as
  $\TERM{T}{1}+\TERM{T}{2}=\TERM{T}{3}$, with the definitions:
  \begin{align*}
    \TERM{T}{1} &:= m(\ddot{p}(t),v_h) - m_k(\ddot{p}_{h}(t),v_h),\nonumber\\
    \TERM{T}{2} &:= a(p(t),v_h) - a_k(p_h(t),v_h),\nonumber\\
    \TERM{T}{3} &:= F(v_h) - F_k(v_h),
  \end{align*}
  and we dropped out the explicit dependence on $t$ to simplify the
  notation.
  We analyze each term separately.
  First, we rewrite $\TERM{T}{1}$ as
  \begin{align*}
    \TERM{T}{1}
    = m_k(\ddot{p}_I -\ddot{p}_{h},v_h)
    + m (\ddot{p}   -\ddot{p}_{\pi},v_h)
    - m_k(\ddot{p}_I -\ddot{p}_{\pi},v_h)
  \end{align*}
  by adding and subtracting $\ddot{p}_{I}$ and
  $\ddot{p}_{\pi}$ to the arguments of $m(\cdot,\cdot)$ and
  $m_k(\cdot,\cdot)$ and noting that, from Proposition~\ref{prop:stabm} we get
  $m(\ddot{p}_{\pi},v_h)=m_k(\ddot{p}_{\pi},v_h)$ for all
  $v_h\in\mathbb{V}_k(\Omega_h)$.
  We also rewrite $\TERM{T}{2}$ as
  \begin{align*}
    \TERM{T}{2}
    = a_k(p_I-p_h,v_h)
    + a(p -p_{\pi},v_h)
    - a_k(p_I-p_{\pi},v_h)
  \end{align*}
  by adding and subtracting $p_I$ and $p_{\pi}$ to the arguments of
  $a(\cdot,\cdot)$ and $a_k(\cdot,\cdot)$ and noting that, from Proposition~\ref{prop:staba} we have
  $a(p_{\pi},v_h) = a_k(p_{\pi},v_h)$ for all
  $v_h\in\mathbb{V}_k(\Omega)$.
  Let $e_h=p_I-p_h$.
  It holds that $e_h(0)=\dot{e}_{h}(0)=0$ since
  $p(0)=p_I(0)$ and
  $\dot{p}_{h}(0)=\dot{p}_{I}(0)$.
  Then, using the definition of $e_h$, we reconsider the error
  equation
  \begin{align}
    \TERM{T}{1} + \TERM{T}{2}
    &= m_k(\ddot{e}_{h},v_h) + a_k(e_h,v_h)
    +  m(\ddot{p} -\ddot{p}_{\pi},v_h)
    -  m_k(\ddot{p}_{I} -\ddot{p}_{\pi},v_h)\nonumber\\
    &+ a (p -p_{\pi},v_h)
    -  a_k(p_I-p_{\pi},v_h)
    = F(v_h) - F_k(v_h) = \TERM{T}{3}.
    \label{eq:proof:energy:10}
  \end{align}
  Assume that $v_h\neq0$ and consider the inequality:
  \begin{align}
   \TERM{T}{3} \leq  | F(v_h) - F_k(v_h) |
    \leq \|F-F_k\|_{\mathbb{V}_k^*(\Omega_h)}\,|v_h|_1,
    \label{eq:proof:energy:10:a}
  \end{align}
    being $\mathbb{V}_k^*(\Omega_h)$ the dual space of
    $\mathbb{V}_k^*(\Omega_h)$ and with
    \begin{align*}
        \|F-F_k\|_{\mathbb{V}_k^*(\Omega_h)} = \sup_{v_h\in\mathbb{V}_k(\Omega_h) \backslash{\{0\}}} \frac{ | F(v_h) - F_k(v_h)| }{ |v_h|_1 }.
    \end{align*}
  Note that there hold:
  \begin{align}
    m_k(\ddot{e}_h,\dot{e}_h) + a_k(e_h,\dot{e}_{h})=\frac{1}{2}\frac{d}{dt}\big( m_k(\dot{e}_{h},\dot{e}_{h}) + a_k(e_h,e_h) \big),
    \label{eq:proof:energy:15:a}\\
    | F(\dot{e}_h) - F_k(\dot{e}_h) |
    \leq  \|F-F_k\|_{\mathbb{V}_k^*(\Omega_h)}\,\TNORM{e_h}{}.
    \label{eq:proof:energy:15:c}
  \end{align}
  Setting $v_h=\dot{e}_{h}(t)$ on the left-hand side
  of~\eqref{eq:proof:energy:10} and
  employing~\eqref{eq:proof:energy:15:a}-\eqref{eq:proof:energy:15:c}
  together with~\eqref{eq:proof:energy:05}, we obtain, after
  rearranging the terms, that:
  \begin{align}
    &\frac{1}{2}\frac{d}{dt}\big( m_k(\dot{e}_{h},\dot{e}_{h}) + a_k(e_h,e_h) \big)
    \leq
    {}-m (\ddot{p}        -\ddot{p}_{\pi},\dot{e}_{h})
    + m_k(\ddot{p}_{I} -\ddot{p}_{\pi},\dot{e}_{h})
    \nonumber\\
    &\qquad
    - a (p -p_{\pi},\dot{e}_{h})
    + a_k(p_I-p_{\pi},\dot{e}_{h})
    +  \|F-F_k\|_{\mathbb{V}_k^*(\Omega_h)}\,\TNORM{e_h}{}.
    \label{eq:proof:energy:25}
  \end{align}

  To ease the notation, we collect together the last two terms above
  and denote them by $\TERM{R}{1}(t)$ (note that they still depend on
  $t$).
  We integrate in time from $0$ to $t$ both sides
  of~\eqref{eq:proof:energy:25} and note that the initial term is zero
  since $e_h(0)=\dot{e_h}(0)=0$ getting
  \begin{align}
    \TNORM{e_h(t)}{h}^2
    &\leq m_k\big(\dot{e}_{h}(t),\dot{e}_{h}(t)) + a_k(e_h(t),e_h(t)\big) \big)
    \nonumber\\
    &\leq \int_{0}^{t}\Big(
    \TERM{R}{1}(\tau)
    - m \big(\ddot{p}(\tau)        -\ddot{p}_{\pi}(\tau),\dot{e}_{h}(\tau)\big)
    + m_k\big(\ddot{p}_{I}(\tau) -\ddot{p}_{\pi}(\tau),\dot{e}_{h}(\tau)\big)
    \nonumber\\
    &\phantom{ \leq \int_{0}^{t}\Big( }
    - a \big(p (\tau)-p_{\pi}(\tau),\dot{e}_{h}(\tau)\big)
    + a_k\big(p_I(\tau)-p_{\pi}(\tau),\dot{e}_{h}(\tau)\big)
    \Big)\,d\tau.
    \label{eq:proof:energy:30}
  \end{align}
  Then, we integrate by parts the integral that contains
  $a(\cdot,\cdot)$ and $a_k(\cdot,\cdot)$, and again use the fact
  that $e(0)=\dot{e}_{h}(0)=0$, to obtain
  \begin{align}
    \TNORM{e(t)}{h}^2
    &\leq \int_{0}^{t}\Big(
    \TERM{R}{1}(\tau)
    + \Big[
    {}-m \big(\ddot{p}(\tau)        -\ddot{p}_{\pi}(\tau),\dot{e}_{h}(\tau) \big)
    + m_k\big(\ddot{p}_{I}(\tau) -\ddot{p}_{\pi}(\tau),\dot{e}_{h}(\tau) \big)\Big]
    \nonumber\\
    &\phantom{ \leq \int_{0}^{t}\Big( }
    + \Big[
    a \big( \dot{p}(\tau)       -\dot{p}_{\pi}(\tau),  e_h(\tau) \big)
    -a_k\big( \dot{p}_{I}(\tau)-\dot{p}_{\pi}(\tau),e_h(\tau) \big) \Big]
    \Big)\,d\tau
    \nonumber\\
    &\phantom{ \leq \int_{0}^{t}\Big( }
    + \Big[
    {}-a \big( p(t) -p_{\pi}(t), e_h(t) \big)
    + a_k\big( p_I(t)-p_{\pi}(t), e_h(t) \big) \Big]
    \nonumber\\
    &= \int_{0}^{t}\Big(
    {}\TERM{R}{1}(\tau)
    + \TERM{R}{2}(\tau)
    + \TERM{R}{3}(\tau)
    \Big)\,d\tau
    + \TERM{R}{4}(t),
    \label{eq:proof:energy:35}
  \end{align}
  where terms $\TERM{R}{\ell}$, $\ell=2,3,4$, match with the squared
  parenthesis.
  We bound term $\TERM{R}{1}$ by using the Young's inequality and \eqref{eq:energy:norm:equivalence}
    \begin{align}
     {| \TERM{R}{1} |
     \leq  C
    \|F-F_k\|_{\mathbb{V}_k^*(\Omega_h)}^2  + \frac{1}{2}\TNORM{e_h}{h}^2.}
    \label{eq:proof:energy:36}
  \end{align}
  To bound $\TERM{R}{2}$ we use the continuity of $m(\cdot,\cdot)$, the tringle inequality for $m_k(\cdot,\cdot)$ and Young's inequality:
  \begin{align*}
    | \TERM{R}{2} |
    & \leq
    | m (\ddot{p}        -\ddot{p}_{\pi}, \dot{e}_{h} ) | +
    | m_k(\ddot{p}_{I} -\ddot{p}_{\pi}, \dot{e}_{h} ) | \nonumber \\
    & \leq  \frac{1}{2}\NORM{\ddot{p}-\ddot{p}_{\pi}}{0}^2 + \frac{1}{2}\NORM{ \dot{e}_{h} }{0}^2 + \frac{1}{2} m_k(\ddot{p}_{I}-\ddot{p}_{\pi}, \ddot{p}_{I}-\ddot{p}_{\pi}) + \frac{1}{2}m_k(\dot{e}_h,\dot{e}_h)
  \end{align*}
Finally, using \eqref{eq:msh:stability_1} to bound $\NORM{ \dot{e}_{h} }{0}^2$  one can easily get
  \begin{align}
    | \TERM{R}{2} |
    & \lesssim \NORM{\ddot{p}-\ddot{p}_{\pi}}{0}^2 + m_k(\ddot{p}_{I}-\ddot{p}_{\pi}, \ddot{p}_{I}-\ddot{p}_{\pi})  + m_k(\dot{e}_h,\dot{e}_h)
    \label{eq:proof:energy:40}
  \end{align}

  Similarly, to bound $\TERM{R}{3}$ we use the continuity of
  $a(\cdot,\cdot)$, the tringle inequality for $a_k(\cdot,\cdot)$,
  Young's inequality and  \eqref{eq:ash:stability_1} to bound  term $|\dot{e}_h|^2_1$ to get
  \begin{align}
    | \TERM{R}{3} |
    & \leq
    | a (\dot{p}        -\dot{p}_{\pi}, e_h ) | +
    | a_k(\dot{p}_{I} -\dot{p}_{\pi}, e_h ) | \nonumber\\
    & \leq \frac{1}{2}|\dot{p} -\dot{p}_{\pi}|_{1}^2 + \frac{1}{2} | e_h |^2_1
+ \frac{1}{2} a_k(\dot{p}_{I}-\dot{p}_{\pi}, \dot{p}_{I}-\dot{p}_{\pi}) + \frac{1}{2}a_k({e}_h,{e}_h) \nonumber \\
    & \lesssim |\dot{p} -\dot{p}_{\pi}|_{1}^2
+  a_k(\dot{p}_{I}-\dot{p}_{\pi}, \dot{p}_{I}-\dot{p}_{\pi}) + a_k({e}_h,{e}_h).
    \label{eq:proof:energy:45}
  \end{align}
  By prooceeding in the same way for $\TERM{R}{4}$ yields to
  \begin{align}
    | \TERM{R}{4} |
    & \leq
    {}| a ( p- p_{\pi}, e_h ) |
    + | a_k( p_I-p_{\pi}, e_h ) | \nonumber\\
   & \lesssim \frac{1}{2\delta}|{p} -{p}_{\pi}|_{1}^2 + \frac{\delta}{2}a_k({e}_h,{e}_h)
   +  a_k(p_{I}-p_{\pi}, p_{I}-p_{\pi}),
    \label{eq:proof:energy:50}
  \end{align}
  for $\delta >0$.
  Using bounds~\eqref{eq:proof:energy:36},~\eqref{eq:proof:energy:40}, \eqref{eq:proof:energy:45},
  and~\eqref{eq:proof:energy:50} in \eqref{eq:proof:energy:35}, we
  find the inequality
  \begin{align*}
    \TNORM{e_h(t)}{h}^2
    \lesssim \widetilde{\mathcal{H}}_0^2(t)
    +\int_{0}^{t}\mathcal{H}_1^2(\tau)\,d\tau,
    +\int_{0}^{t} \TNORM{e_h(\tau)}{h}^2\,d\tau,
  \end{align*}
  where $\widetilde{\mathcal{H}}_0^2(t) = |{p} -{p}_{\pi}|_{1}^2 +  a_k(p_{I}-p_{\pi}, p_{I}-p_{\pi})$ while
 $\mathcal{H}_1(t)$ is reported in \eqref{eq:semi-discrete:convergence:G1}.
  Again, an application of the Gronwall's Lemma~\cite[Lemma~A5,~p.~157]{Brezis:1973}
  yields
  \begin{align*}
    \TNORM{e_h(t)}{h}^2\lesssim \widetilde{\mathcal{H}}_0^2(\tau)+\int_{0}^{t}\mathcal{H}_1^2(\tau)d\tau,
  \end{align*}
  that combined with \eqref{eq:proof:energy:00} concludes the proof.
\end{proof}

\begin{corollary}
Under the assumptions of Theorem~\ref{theorem:semi-discrete:convergence}, for $f\in L^2((0,T]; H^{s-1}(\Omega))$ we have  that
\begin{align}
\sup_{t\in (0,T]} \TNORM{p-p_h}{}^2 & \lesssim h^{2s-2} \left( h^2 \| \dot{p} \|_s^2 + \| p \|_s^2  +  \int_0^T  h^2 \| \ddot{p}(\tau) \|_s^2 + \| \dot{p}(\tau) \|_s^2   + h^{2} |f(\tau)|^2_{s-1}  \, d\tau \right),
\end{align}
where $\frac{3}{2}\leq s \leq k+1$ and the hidden constant may depend on the model parameters and approxiamtion constants, the polynomial degree and the final observation time $T$.
\end{corollary}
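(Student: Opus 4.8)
The plan is to specialize the abstract a~priori estimate \eqref{eq:semi-discrete:convergence} of Theorem~\ref{theorem:semi-discrete:convergence} to optimal choices of the piecewise polynomial $p_\pi(t)$ and of the virtual interpolant $p_I(t)$, and then to bound separately every summand of $\mathcal{H}_0$ and $\mathcal{H}_1$ in \eqref{eq:semi-discrete:convergence:G0}--\eqref{eq:semi-discrete:convergence:G1}. Concretely, I would take $p_\pi(t)=\Pi_k^0 p(t)$ elementwise and $p_I(t)$ the virtual interpolant realizing \eqref{eq:stima_I}--\eqref{eq:stima_I_0}. Since both are produced by \emph{time-independent} projection/interpolation operators, differentiation in $t$ commutes with them, so $\dot p_\pi=\Pi_k^0\dot p$, $\ddot p_\pi=\Pi_k^0\ddot p$, and likewise $\dot p_I,\ddot p_I$ inherit the estimates \eqref{eq:stima_pi}--\eqref{eq:stima_pi_0} and \eqref{eq:stima_I}--\eqref{eq:stima_I_0} with $p$ replaced by $\dot p$ or $\ddot p$ on the right-hand side. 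All the estimates are applied with the corollary's index $\tfrac32\le s\le k+1$.

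I would first dispatch the energy-norm contributions. Recalling from \eqref{eq:energy:norm:equivalence} that $\TNORM{w}{}^2=\|\rho^{1/2}\dot w\|_0^2+|\mu^{1/2}w|_1^2$, the term $\TNORM{p-p_\pi}{}^2$ splits into an $L^2$ piece $\|\dot p-\dot p_\pi\|_0^2\lesssim h^{2s}\|\dot p\|_s^2$ from \eqref{eq:stima_pi_0} and an $H^1$-seminorm piece $|p-p_\pi|_1^2\lesssim h^{2s-2}\|p\|_s^2$ from \eqref{eq:stima_pi} with $r=1$; together these give $h^{2s-2}(h^2\|\dot p\|_s^2+\|p\|_s^2)$, i.e. exactly the first two terms of the claimed bound. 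The same splitting applied to $\TNORM{\dot p-\dot p_\pi}{}^2$ produces $h^{2s-2}(h^2\|\ddot p\|_s^2+\|\dot p\|_s^2)$, which furnishes the first part of the time integrand.

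For the discrete bilinear forms I would invoke the stability inequalities \eqref{eq:msh:stability_2} and \eqref{eq:ash:stability_2} with virtual part $\dot p_I$ (resp. $p_I$) and polynomial part $-\dot p_\pi$ (resp. $-p_\pi$). The leading summands $\|\dot p_I-\dot p_\pi\|_0^2$ and $|p_I-p_\pi|_1^2$ are controlled by the triangle inequality together with \eqref{eq:stima_I_0}, \eqref{eq:stima_pi_0} (rate $h^{2s}$ for $m_k$) and \eqref{eq:stima_I}, \eqref{eq:stima_pi} (rate $h^{2s-2}$ for $a_k$). The residual stabilization summands $\|(I-\Pi_k^0)\dot p_I\|_0$, $|(I-\Pi_k^\nabla)p_I|_1$ and their fractional $H^{1+\varepsilon}$ analogues I would bound by writing $(I-\Pi)\,p_I=(I-\Pi)(p_I-p)+(I-\Pi)p$: the second addend is estimated through quasi-optimality of the projection, and the first through the interpolation estimates combined with an inverse estimate on the polynomial projection, e.g. $|\Pi_k^0(p_I-p)|_{1+\varepsilon}\lesssim h^{-1-\varepsilon}\|p_I-p\|_0\lesssim h^{s-1-\varepsilon}\|\dot p\|_s$. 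This yields $|(I-\Pi_k^0)\dot p_I|_{1+\varepsilon}\lesssim h^{s-1-\varepsilon}\|\dot p\|_s$ and the analogous $H^{1+\varepsilon}$ bound for $p_I$; the stabilization weights then combine as $h^{2\varepsilon+2}h^{2(s-1-\varepsilon)}=h^{2s}$ and $h^{2\varepsilon}h^{2(s-1-\varepsilon)}=h^{2s-2}$, so the $m_k$ and $a_k$ contributions of $\mathcal{H}_0$ and $\mathcal{H}_1$ match the target rates.

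Finally, for the functional consistency term in \eqref{eq:semi-discrete:convergence:G1} I would use $F(v_h)-F_k(v_h)=\sum_{E\in\Omega_h}(f-\Pi_k^0 f,(I-\Pi_k^0)v_h)_E$, which holds because $(I-\Pi_k^0)v_h$ is $L^2$-orthogonal to $\mathbb{P}_k(E)$. Then elementwise Cauchy--Schwarz, the polynomial estimate $\|f-\Pi_k^0 f\|_{0,E}\lesssim h^{s-1}|f|_{s-1,E}$, and the Poincar\'e-type bound $\|(I-\Pi_k^0)v_h\|_{0,E}\lesssim h_E|v_h|_{1,E}$ give, after summing over $E$, the estimate $\|F-F_k\|_{\mathbb{V}_k^*(\Omega_h)}\lesssim h^s|f|_{s-1}$, i.e. the $h^2|f|_{s-1}^2$ term. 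Taking the supremum in time on the $\mathcal{H}_0$ part, integrating the $\mathcal{H}_1$ part over $(0,T]$, and collecting the common factor $h^{2s-2}$ assembles the stated bound. The main obstacle is precisely the treatment of the fractional $H^{1+\varepsilon}$ stabilization terms carried by the curved edges: one must verify that the $h$-weights coming from Lemma~\ref{lemma_cont} (hence from Propositions~\ref{prop:stabm}--\ref{prop:staba}) cancel the $\varepsilon$-dependence against the approximation exponent $h^{s-1-\varepsilon}$, leaving the optimal, $\varepsilon$-independent rates; this is where the inverse estimate in fractional norms and the interpolation bound \eqref{eq:stima_I} must be combined with care.
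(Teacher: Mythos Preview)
Your proposal is correct and follows essentially the same route as the paper: specialize the abstract bound of Theorem~\ref{theorem:semi-discrete:convergence}, handle the energy-norm pieces via \eqref{eq:stima_pi}--\eqref{eq:stima_pi_0}, bound the $m_k$ and $a_k$ contributions through the stability inequalities \eqref{eq:msh:stability_2}--\eqref{eq:ash:stability_2} together with the splitting $(I-\Pi)p_I=(I-\Pi)(p_I-p)+(I-\Pi)p$ and an inverse estimate on the polynomial part, and treat the load term by the orthogonality identity $F(v_h)-F_k(v_h)=((I-\Pi_k^0)f,(I-\Pi_k^0)v_h)$ combined with a Poincar\'e-type bound. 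Your identification of the fractional $H^{1+\varepsilon}$ stabilization as the delicate step, and the cancellation mechanism $h^{2\varepsilon+2}\cdot h^{2(s-1-\varepsilon)}=h^{2s}$ (resp.\ $h^{2\varepsilon}\cdot h^{2(s-1-\varepsilon)}=h^{2s-2}$), matches exactly what the paper does for its term~$\TERM{T}{3}$.
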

\begin{proof}
The proof follows by estimating the terms $\mathcal{H}^2_0$ and $\mathcal{H}^2_1$ in \eqref{eq:semi-discrete:convergence:G0}
and \eqref{eq:semi-discrete:convergence:G1}, respectively.
We start by considering the term $\TNORM{p-p_\pi}{}^2$.
By applying \eqref{eq:stima_pi}--\eqref{eq:stima_pi_0}, it holds
\begin{equation}\label{eq:stima_pi_trebarre}
\TNORM{p-p_\pi}{}^2 \lesssim h^{2s-2} \left( h^2 \| \dot{p} \|^2_s + \| p \|^2_s \right).
\end{equation}
We now estimate the term $m_k(\dot{p}_{I}-\dot{p} _{\pi}, \dot{p}_{I}-\dot{p}_{\pi})$ by using  the result of Proposition~\ref{prop:stabm}. In particular, for all $E\in \Omega_h$ we obtain
\begin{align}
m_k^E(\dot{p}_{I}- \dot{p}_{\pi}, \dot{p}_{I} - \dot{p}_{\pi})&\lesssim \| \dot{p}_{I}-\dot{p}_{\pi}\|_{0,E}^2 + \| (I-\Pi_k^0)\dot{p}_{I}\|_{0,E}^2 + h^{2\varepsilon+2} | (I-\Pi_k^0)\dot{p}_{I}|_{1+\varepsilon,E}^2\nonumber \\
& = :  \TERM{T}{1} + \TERM{T}{2} + \TERM{T}{3}. \nonumber
\end{align}
The first term, using triangle inequality together with~\eqref{eq:stima_I_0}, is estimated as follows
\begin{align*}
\TERM{T}{1} & \leq \| \dot{p} - \dot{p}_I\|_{0,E}^2 + \| \dot{p} - \dot{p}_\pi\|_{0,E}^2
\lesssim h^{2s} \| \dot{p} \|^2_{s},
\end{align*}
Concerning the second term, by the continuity of the $\Pi^0_k$ projection and by using \eqref{eq:stima_I_0} we have
\begin{align*}
\TERM{T}{2} & \leq \| (I-\Pi_k^0)(\dot{p}-\dot{p}_I) \|_{0,E}^2 + \| (I-\Pi_k^0)\dot{p} \|_{0,E}^2 \leq \| \dot{p}-\dot{p}_I \|_{0,E}^2 + \| \dot{p} \|_{0,E}^2 \lesssim h^{2s} \| \dot{p} \|^2_{s},
\end{align*}
Finally the last term is handled using equation~\eqref{eq:stima_I} and standard polynomial inverse estimates on star-shaped
domains getting
\begin{align*}
\TERM{T}{3}  & \lesssim h^{2\varepsilon+2} | \dot{p}_{I} - \dot{p} |_{1+\varepsilon,E}^2
 + h^{2\varepsilon+2} | (I-\Pi_k^0)\dot{p} |_{1+\varepsilon,E}^2
+ h^{2\varepsilon+2} | \Pi_k^0(\dot{p}-\dot{p}_I) |_{1+\varepsilon,E}^2 \\
& \lesssim h^{2\varepsilon+2+2s-2\varepsilon-2} \| \dot{p} \|_{s}^2 + h^{2s}\| \dot{p} \|_{s}^2 + \| \dot{p}-\dot{p}_I\|_{0,E}^2 \lesssim h^{2s}\| \dot{p} \|_{s}^2
\end{align*}
Collecting all the estimates and summing over all the elements we obtain
\begin{equation}\label{eq:estimate_m}
m_k(\dot{p}_{I}- \dot{p}_{\pi}, \dot{p}_{I} - \dot{p}_{\pi}) \lesssim h^{2s}\| \dot{p} \|_{s}^2.
\end{equation}
By proceeding similarly we can obtain a bound for $a_k^E(p_{I}- p_{\pi}, p_{I} - p_{\pi})$ as follows
\begin{align}
a_k^E(p_{I}- p_{\pi}, p_{I} - p_{\pi})&\lesssim  | p_{I}-p_{\pi} |_{1,E}^2 + \| (I-\Pi^\nabla_0)p_{I}\|_{0,E}^2 + h^{2\varepsilon+2} | (I-\Pi^\nabla_0)p_{I}|_{1+\varepsilon,E}^2\nonumber \\
& \lesssim h^{2s-2}\| p \|_{s}^2. \label{eq:estimate_a}
\end{align}
Now, summing up \eqref{eq:stima_pi_trebarre}, \eqref{eq:estimate_m} and \eqref{eq:estimate_a} we have
\begin{equation}
\mathcal{H}^2_0 \lesssim h^{2s-2} \left( h^2 \| \dot{p} \|^2_s + \| p \|^2_s \right).
\end{equation}
Concerning the term $\mathcal{H}^2_1$ we
note that $$F_k(v_h) = \sum_{E\in\Omega_h} ( f ,\Pi_k^0 v_h )_E =  \sum_{E\in\Omega_h} ( \Pi_k^0 f  , v_h )_E$$ for any $v_h \in V_k$.
Then, it holds
\begin{align*}
| F(v_h) - F_k(v_h) | & \leq \sum_{E\in\Omega_h} |((I- \Pi_k^0) f , v_h )_E |  = \sum_{E\in\Omega_h} |(I- \Pi_k^0) f , (I-\Pi^0_0) v_h )_E | \\
& \lesssim h^{s-1} |f|_s \;h |v_h |_1.
\end{align*}
Consequently,
\begin{align*}
 \|F-F_k\|_{\mathbb{V}_k^*(\Omega_h)}^2 \lesssim h^{2s} |f|_{s-1}^2,
\end{align*}
Collecting the above inequality together with estimates \eqref{eq:estimate_m} and \eqref{eq:estimate_a} for the terms $m_k(\ddot{p}_{I}- \ddot{p}_{\pi}, \ddot{p}_{I} - \ddot{p}_{\pi})$
and  $a_k^E(p_{I}- p_{\pi}, p_{I} - p_{\pi})$, respectively,  we conclude the proof by observing that

\begin{equation*}
\mathcal{H}^2_1 \lesssim h^{2s-2}\left( h^2 \| \ddot{p} \|_s^2 + \| \dot{p} \|_s^2  + h^{2} |f|^2_{s-1} \right).
\end{equation*}

\end{proof}

\section{Numerical results}\label{Sc:NumericalResults}

In this section we consider three different test cases to verify \as{the theoretical results} and show the capabilities of the numerical scheme presented in this work. In particular, we consider geometries with curved interfaces at the boundary, or internal of the domain. We compare our approach with a classical Virtual Element discretization, for which the
geometry is not respected, \as{i.e., is approximated by straight edges}. We will show that, for the latter, the geometrical error will dominate the numerical error, leading to a loss of convergence order.

The aim of the first case presented in Section~\ref{subsec:numerical_example_error_decay} is to show the error decay when the analytical solution is known.
In the second case, discussed in Section~\ref{subsec:numerical_example_plane_wave}, we present the propagation of a plane wave in a heterogeneous domain with a circular inclusion. 
Finally, in Section~\ref{subsec:numerical_example_listric_fault}, an \as{idealization of a realistic} curved geometry is taken into account showing the
applicability of the method in a domain with complex interfaces.

In the following we name \texttt{withGeo} the current method and  \texttt{noGeo} the approach where curved edges are approximated by straight lines.

The problem so far considered is in a semi-discrete version, to derive the
fully-discrete problem we sub-divide the time interval $(0,T]$ into $N_T$ intervals with equal size $\Delta t$, we write $\bp^{(i)} = \bp(t_i)$ with $t_i = i \Delta
t$, for $i=0,..,N_T$.
To avoid limitation on the time step, we consider the following second
order implicit scheme
\begin{gather*}
\begin{aligned}
    \left[ M  + \frac{\Delta t}{2} C + \Delta t^2 A \right]\bp^{(i+2)}
     & = 2 M \bp^{(i+1)}
    + \left[\frac{\Delta t}{2} C - M \right] \bp^{(i)}+  \Delta t^2
    \bfe^{(i+2)}, \quad i \geq 0,\\
    M  \bp^{(1)} & = \left[ M - \frac{\Delta t}{2}A \right]\bp_0 - \Delta t(M + C) \bp_1 + \frac{\Delta t^2}{2}\bfe^{(0)}.
\end{aligned}
\end{gather*}

\subsection{\toCheck{Verification test}}\label{subsec:numerical_example_error_decay}

In this test case, we verify the error decay of the virtual element solution with respect to the mesh size $h$. 
\fd{We compare the results of the proposed method, \texttt{withGeo}, with respect 
to the ones obtained approximating the curved boundary with straight edges, \texttt{noGeo}, and
we see the impact of handling exactly the geometry on the numerical solution.}

 We consider Problem~\eqref{modelpb} posed in a circular ring having internal and external radii equal to $r_i = 0.5$ and $r_o = 1$, respectively.  Figure \ref{fig:ex1_grid} shows the computational domain.
\begin{figure}[tbp]
    \centering
    \includegraphics[width=0.33\textwidth]{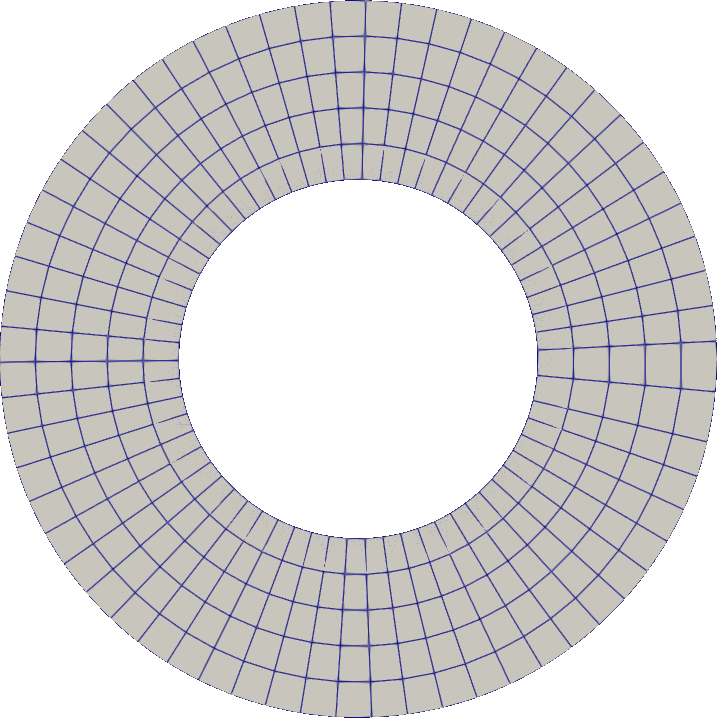}%
    \hspace*{0.1\textwidth}
    \includegraphics[width=0.33\textwidth]{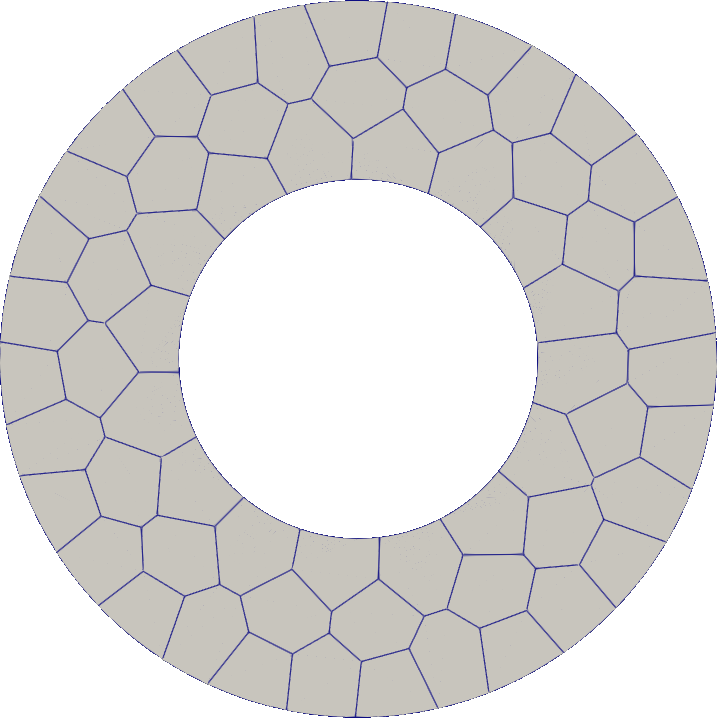}
    \caption{Computational domain for the example in section
    \ref{subsec:numerical_example_error_decay}. On the left a representative of the first family \texttt{quad} of grids, made of radial rectangles. On the right a grid from the second family, composed by polygons and named \texttt{poly}.}%
    \label{fig:ex1_grid}
\end{figure}
 We consider the following analytical solution
\begin{gather}\label{pex}
    p_{\rm ex}(\bm{x}, t) = \sin(0.5 \pi
    t) \sin(0.5 \pi {x}_1 {x}_2) (\Vert \bm{x} \Vert^2-r_i^2)(\Vert \bm{x} \Vert^2-r_o^2),
\end{gather}
together with  Dirichlet conditions on the boundary and
$\mu = 1$, $\rho = 1$. Source term and initial condition are computed accordingly to \eqref{pex}. 
We remark that solution is regular in space and time \toCheck{so that the hypothesis of Theorem~\ref{theorem:semi-discrete:convergence} are verfied}.

To \toCheck{consider only the space discretization error}, we set $\Delta t = 10^{-8}$ and compute the $H^1$ and $L^2$ errors in space at the end of the simulation time \toCheck{fiexd as} $T = 10 \Delta t$. \toCheck{
\begin{remark}
We notice that the asympotic behavoiur of the $H^1$-error can directly be inferred from the results of Theorem~\ref{theorem:semi-discrete:convergence}, while the $L^2$ error decay con be obtained by using similar arguments of those presented in \cite{AntoniettiManziniMazzieriMouradVerani_2019}. The latter is beyond the scope of this work.
\end{remark}}

We consider two families of meshes: the first named \texttt{quad} is composed by radial rectangles while the second \texttt{poly} is constructed from a Voronoi tessellation,  cf. Figure~\ref{fig:ex1_grid}.
In both cases the elements have curved edges at the boundary and straight internally. The errors in $L^2$ and
semi-norm $H^1$ are computed as
\begin{gather*}
    L^2 \text{ error} = \frac{\sqrt{\sum_{E \in \Omega_h}\Vert p_{\rm ex} -
    \Pi_k^0 p \Vert_{E}^2}}{\Vert p_{\rm ex} \Vert_{\Omega_h}}
    \quad \text{and} \quad
    H^1 \text{ error} = \frac{\sqrt{\sum_{E \in \Omega_h}\Vert \nabla p_{\rm ex} -
    \bm{\Pi}_{k-1}^0 \nabla p \Vert_{E}^2}}{\Vert \nabla p_{\rm ex} \Vert_{\Omega_h}}.
\end{gather*}
\begin{figure}[tbp]
    \centering
    \includegraphics[width=1\textwidth]{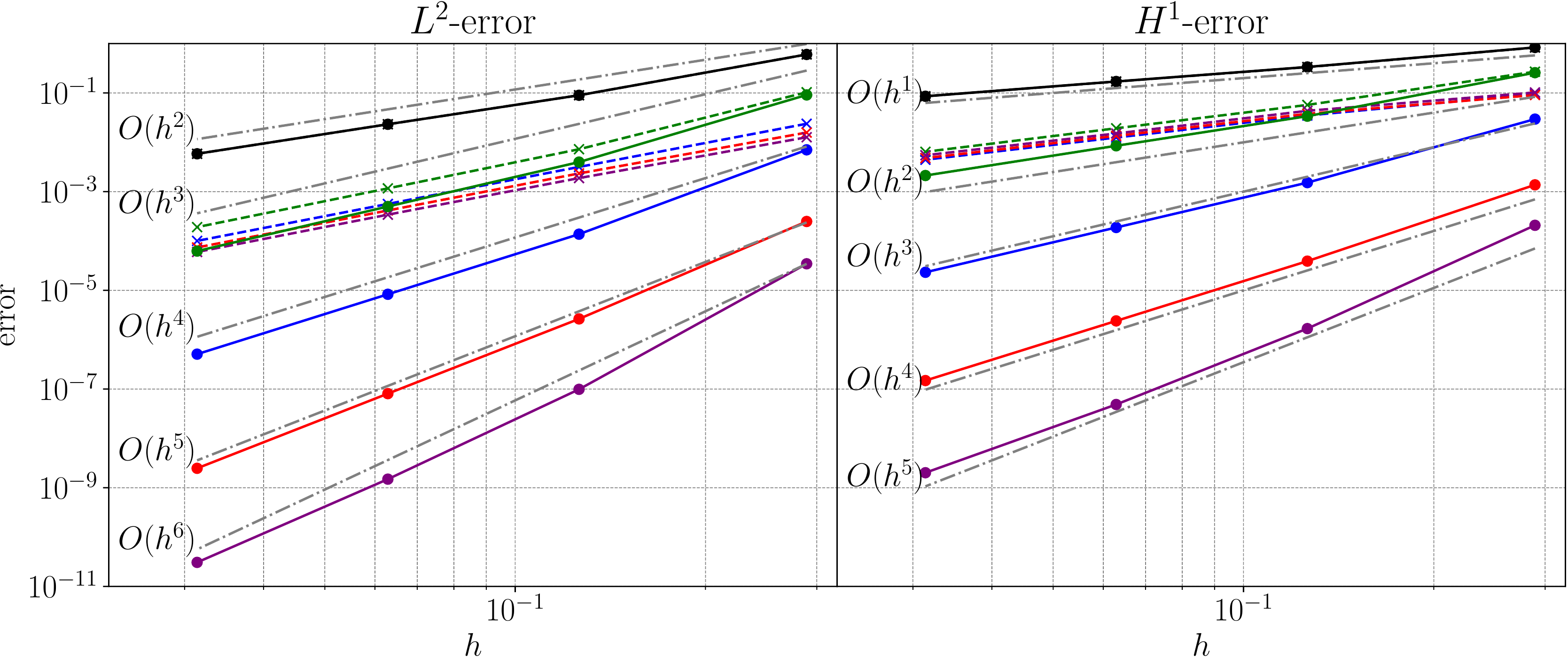}\\
    \includegraphics[width=1\textwidth]{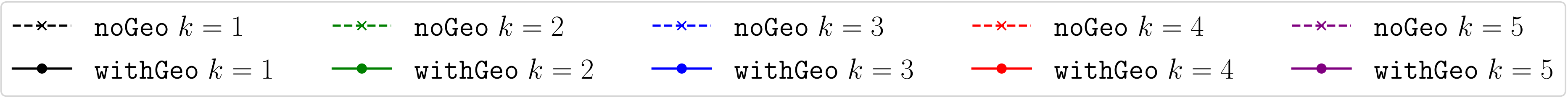}%
    \caption{$L^2$ and $H^1$ error decay for the family \texttt{quad}
    in the example in Subsection
    \ref{subsec:numerical_example_error_decay}.}%
    \label{fig:ex2_quad_error}
\end{figure}
\begin{figure}[tbp]
    \centering
    \includegraphics[width=1\textwidth]{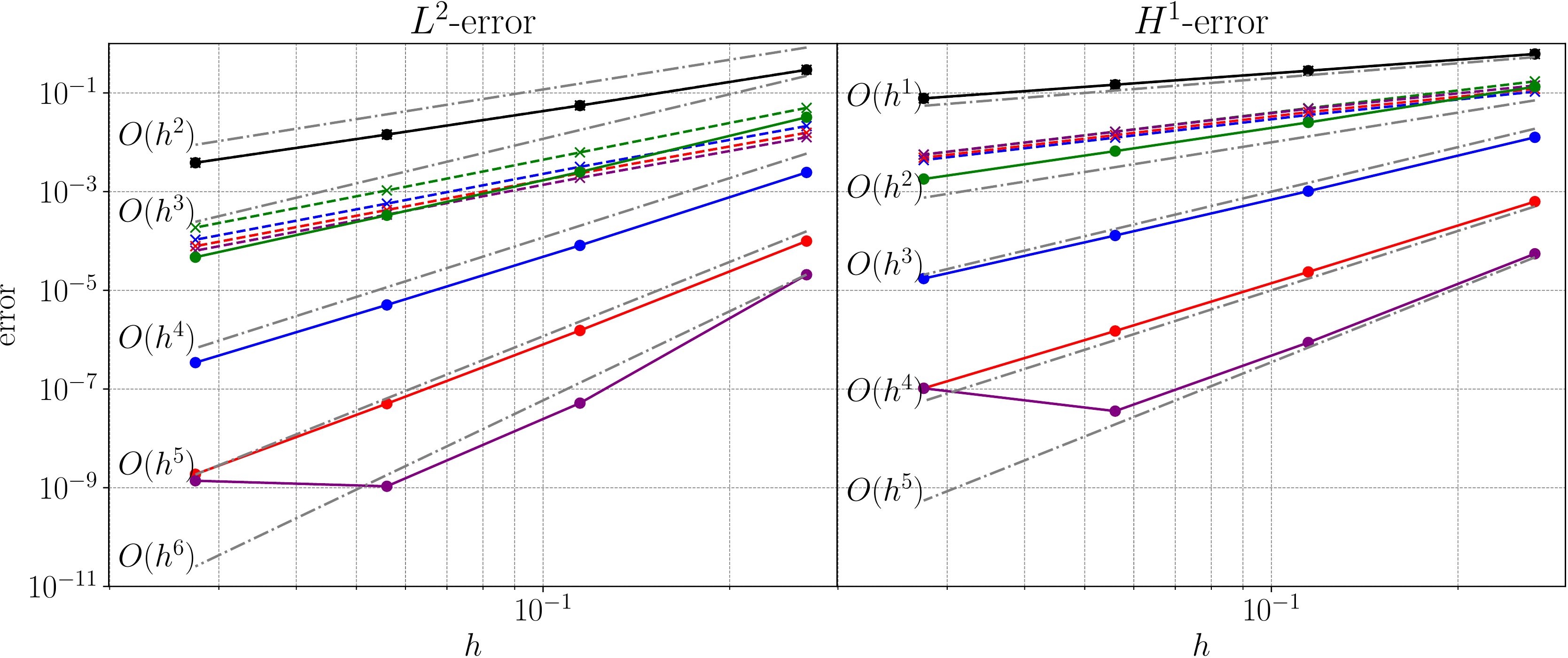}\\
    \includegraphics[width=1\textwidth]{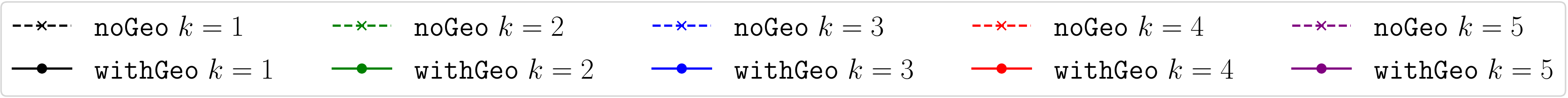}%
    \caption{$L^2$ and $H^1$ error decay for the family \texttt{poly}
    in the example in Subsection
    \ref{subsec:numerical_example_error_decay}.}%
    \label{fig:ex2_poly_error}
\end{figure}
Figure \ref{fig:ex2_quad_error} shows the error decay for the family \texttt{quad} and Figure
\ref{fig:ex2_poly_error} for the family \texttt{poly}.
In both cases we notice that for {$k = 1$}
the $H^1$ error for \texttt{withGeo} and \texttt{noGeo} decays as expected, i.e., as
$O(h^k)$. However, for {$k \geq 2$} the geometrical error dominates in the
\texttt{noGeo} and limits the error decay to $O(h^{3/2})$. For the \texttt{withGeo}
method the error decay behaves as expected reaching the convergence rate equal
to $O(h^k)$.

For the $L^2$-error the situation is similar, as we expect an error of convergence equal to $O(h^{k+1})$.
{ As before, the geometrical error \as{limits convergence rate}  for $k\geq 2$ for
\texttt{noGeo}.} Again, for the method \texttt{withGeo} the errors decay as
expected. For degree $k=5$ in the \texttt{poly} we notice a stagnation of the
error for small $h$  in both $L^2$ and $H^1$ norm, 
probably related to \toCheck{the numerical linear algebra}. 
\fd{A numerical proof of this inference is that such phenomena is not present in the \texttt{quad} family. 
Indeed, quadrilateral meshes have more regular shapes with respect to the polygonal ones and, consequenly,
the linear system arising from such discretization will have a lower condition number.}

We can conclude that, at least for this example, the proposed method is
an attractive approach to solve the wave equation with high order approximation in presence of curved boundaries.

\subsection{Plane wave test case}\label{subsec:numerical_example_plane_wave}

In this second case, a plane wave enters the computational domain $\Omega=(-1, 1)^2$ from the left boundary of  and encounters a circular inclusion with
different mechanical properties, i.e., a different $\mu$.
The inclusion is a circle of radius 0.2, i.e.,
$$
\gamma=\{\bm{x}: \Vert{\bm{x}} \Vert^2 - 0.2^2 = 0 \}\,.
$$
The computational domain is depicted in Figure \ref{fig:ex3_grid}.
The computational grid is constructed starting from
a Cartesian grid and then cutting all the elements that are crossed by $\gamma$.
The obtained grid is not extremely refined around $\gamma$
since the exact geometry is captured by curved edges,
indeed the number of elements is $6625$ compared to $6561$ of the original
Cartesian grid.
This fact represents an advantage from the computational point of view
since we do not have to increase the number of degrees of freedom
to capture the internal curved interface.
\begin{figure}[tbp]
    \centering
    \includegraphics[width=0.3\textwidth]{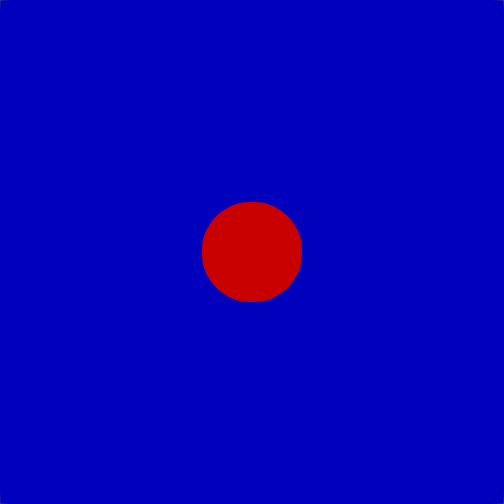}%
    \hspace*{0.05\textwidth}%
    \includegraphics[width=0.3\textwidth]{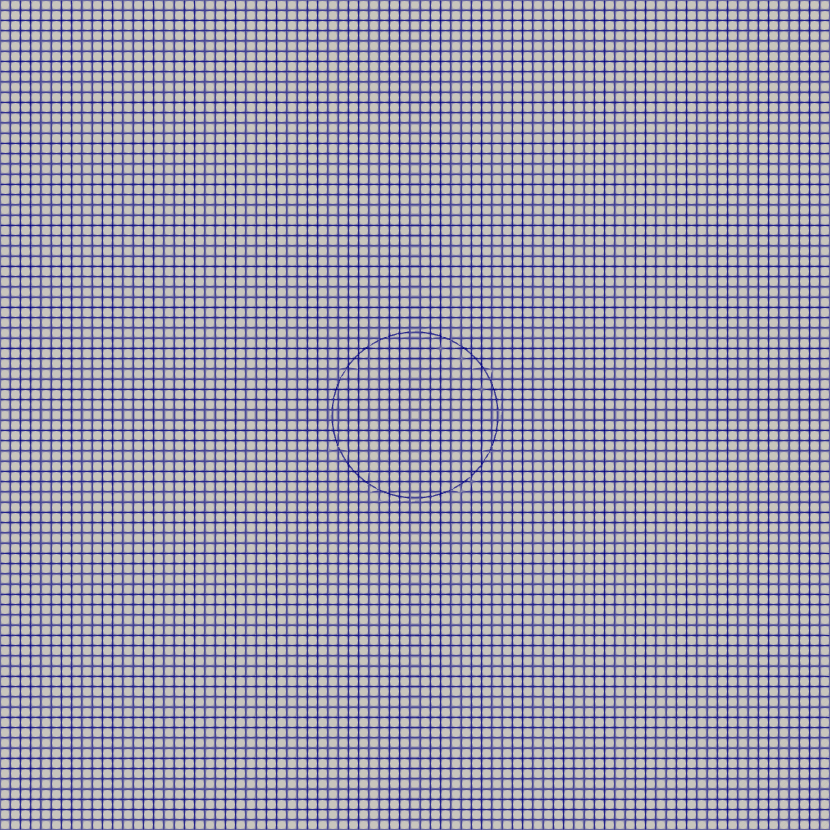}%
    \hspace*{0.05\textwidth}%
    \includegraphics[width=0.3\textwidth]{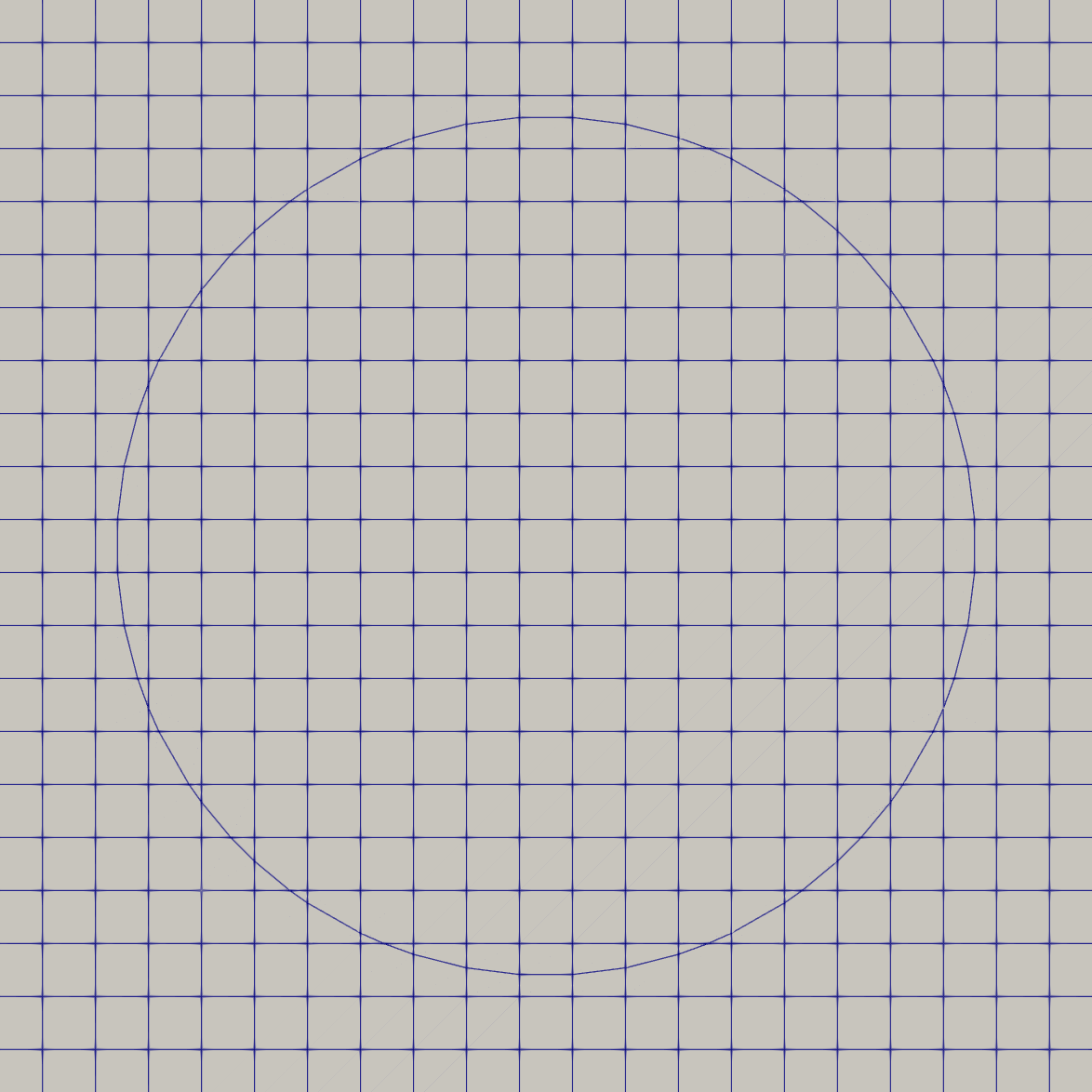}%
    \caption{Computational domain for the example in section
    \ref{subsec:numerical_example_error_decay}. On the left the domain with
    \texttt{zone 1} and \texttt{zone 2} highlighted. On the centre the
    computational grid, while on the right a zoom around the curved inclusion $\gamma$.}%
    \label{fig:ex3_grid}
\end{figure}

We solve Problem \ref{eq:semi_discr_weak_form} where we set on the top and
bottom edges a homogeneous Neumann condition, on the right edge an   absorbing
boundary condition, and on the left edge the following Dirichlet condition:

\begin{gather*}
    p(\bm{x}, t) =
    \begin{dcases*}
        \sin(\toCheck{\omega}\pi t) & if $t \leq \frac{2}{\toCheck{\omega}}$,\\
        0 & otherwise,
    \end{dcases*}
\end{gather*}
being $\omega$ the angular frequency. The initial solution and velocity are set to zero, $\rho=1$, and the final time
is set to $T=3$. In the blue region of Figure \ref{fig:ex3_grid}, outside the
inclusion $\gamma$, we fix $\mu=1$ while in the red region, inside the inclusion, we
chose $\mu = 10^{-2}$.
The source term $f$ is null, the approximation
degree set to $k=4$ and the time step equal to $\Delta t = 10^{-4}$.

We consider three different cases, depending on the wavelength \toCheck{$\lambda = \omega^{-1}$} and
the dimension of the inclusion $\gamma$. In \textit{case (i)} \toCheck{$\lambda=\frac{1}{2}$}, so the
resulting plane wave has \toCheck{a wavelength}  that is bigger than the dimension of $\gamma$. In \textit{case (ii)}, we set \toCheck{$\lambda=\frac{1}{5}$} which implies that the
\toCheck{wavelength} and the dimension of $\gamma$ are now comparable. Finally, in
\textit{case (iii)} the value of $\lambda$ is set to be \toCheck{$\lambda=\frac{1}{20}$}. We obtain a plane
wave with \toCheck{a wavelength} that is much smaller than the dimension of $\gamma$. We want
to understand, \toCheck{qualitatively}, the impact of the curved geometry in our formulation for these cases.

The results are represented in Figure \ref{fig:ex3_sol}.
\begin{figure}[p]
\centering
\begin{tabular}{ccc}
\multicolumn{3}{c}{
\text{$p_h(\textbf{x},t)$}}\\
\multicolumn{3}{c}{
\includegraphics[width=0.5\textwidth]{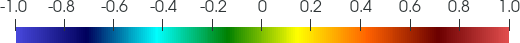}
}
\\[1.0em]
\textit{case (i)} &
\textit{case (ii)} &
\textit{case (iii)} \\[1.0em]
\includegraphics[width=0.30\textwidth]{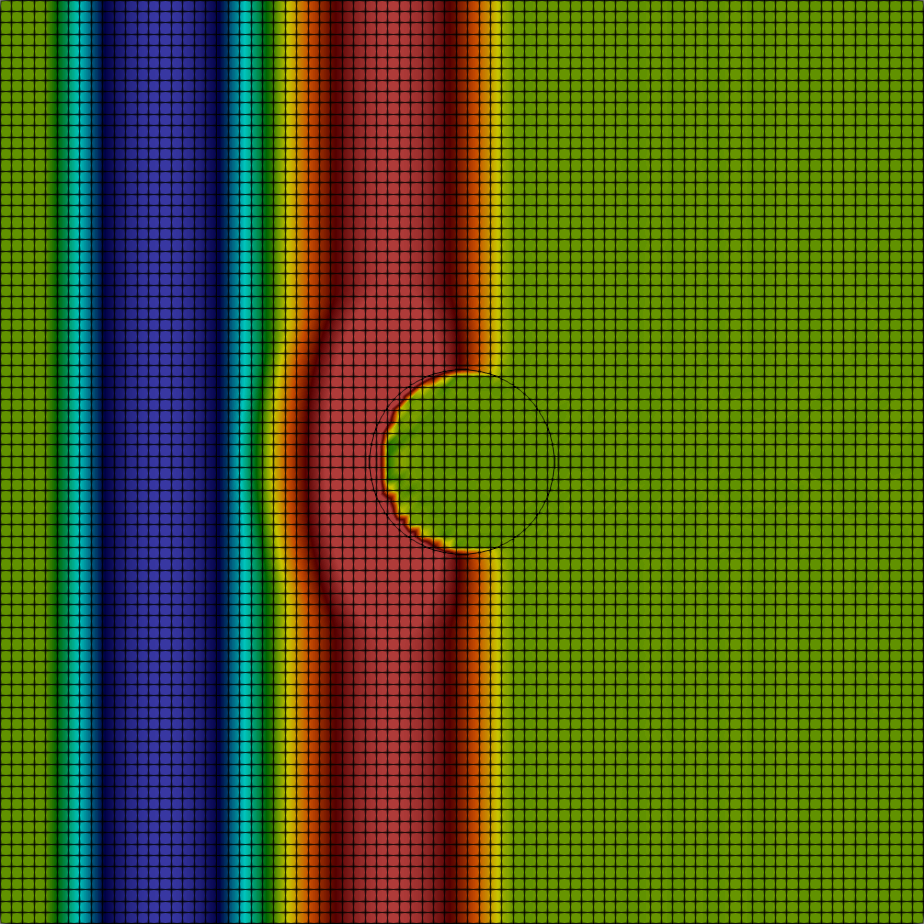} &
\includegraphics[width=0.30\textwidth]{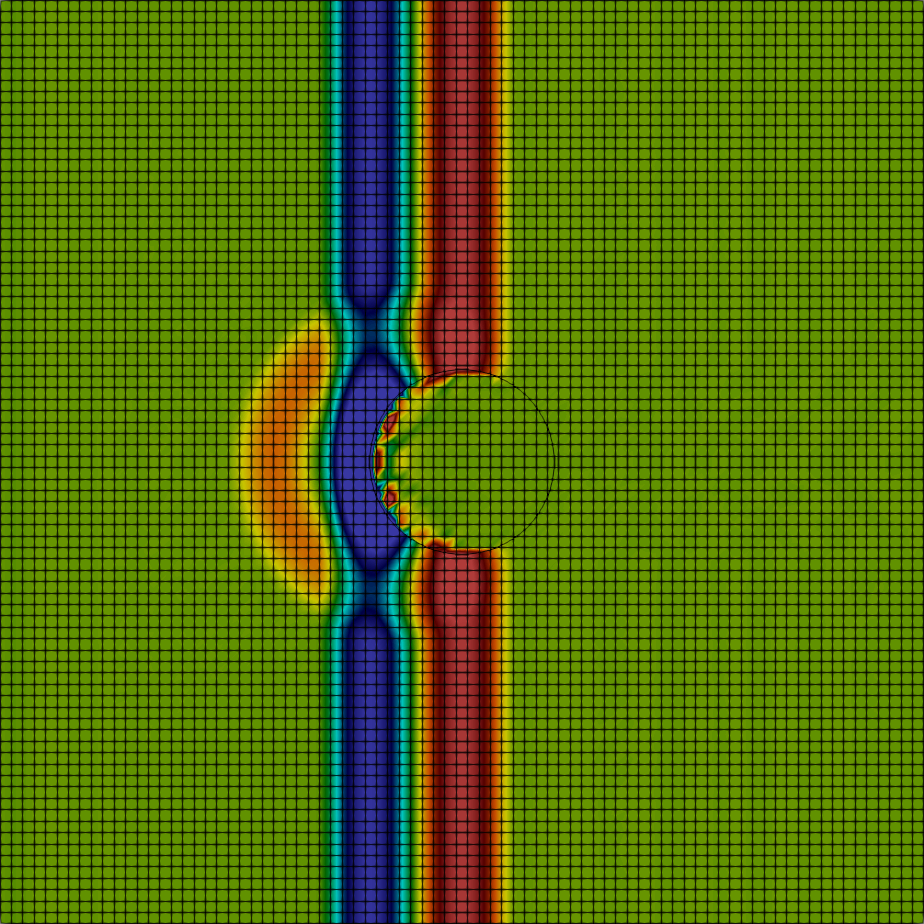} &
\includegraphics[width=0.30\textwidth]{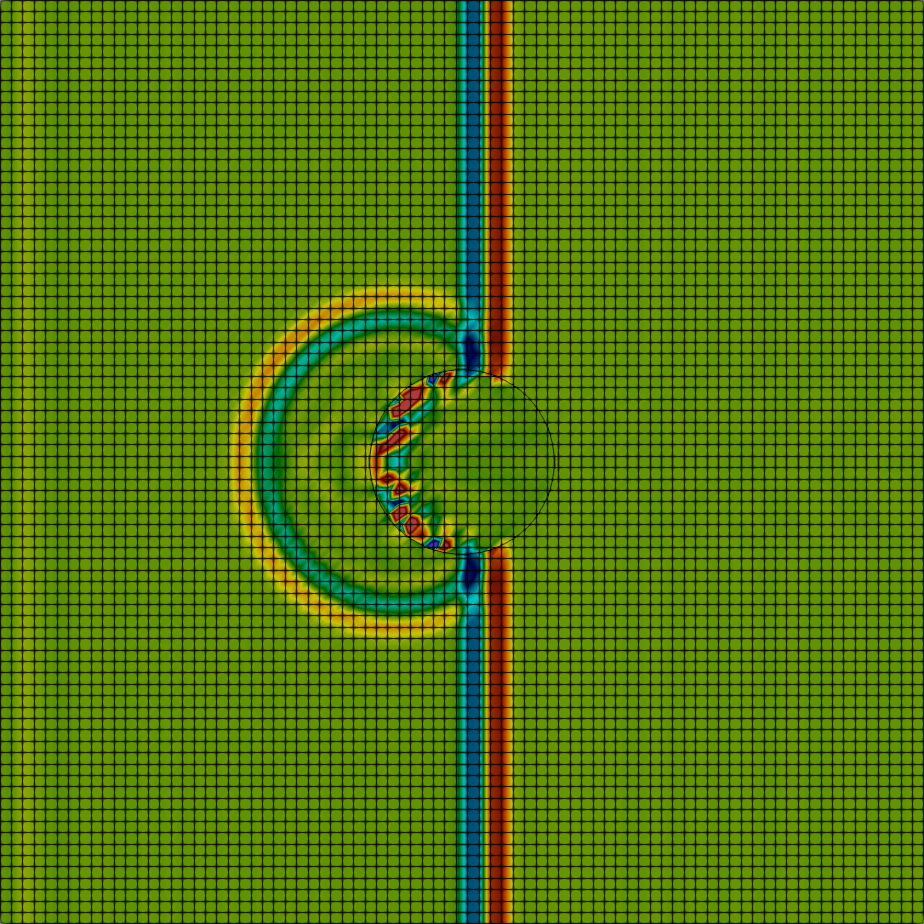} \\
$ t = 6 \Delta t$ &
$ t = 8 \Delta t$ &
$ t = 10 \Delta t$ \\[1.0em]
\includegraphics[width=0.30\textwidth]{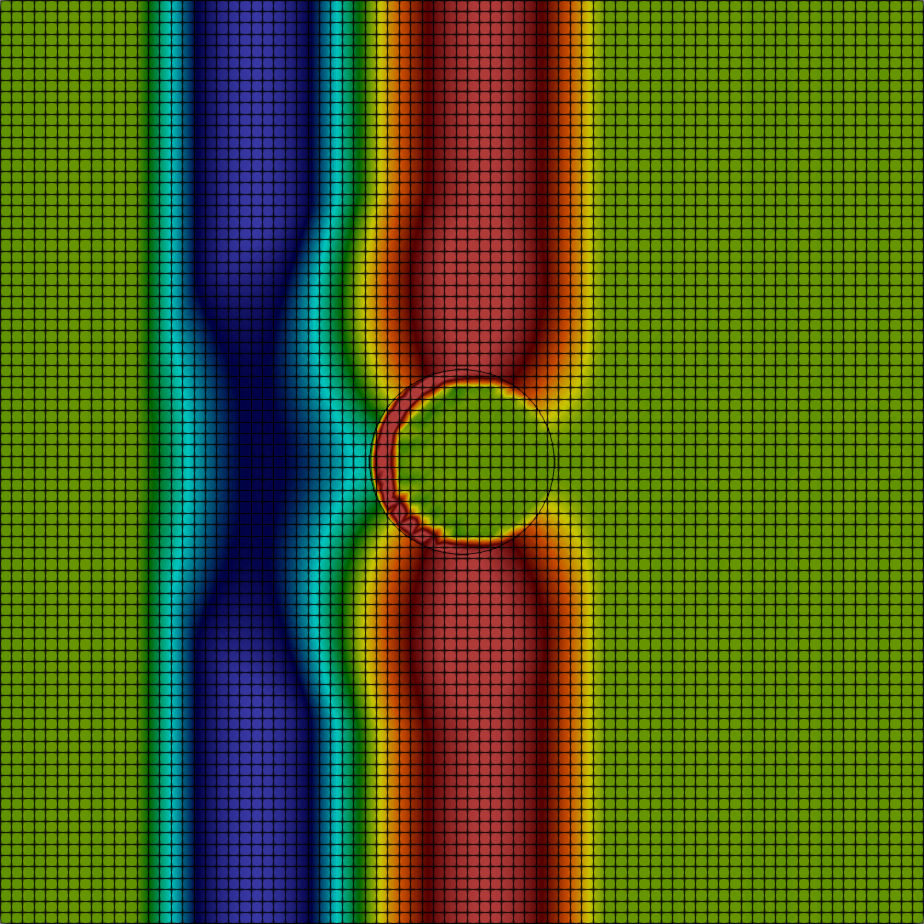} &
\includegraphics[width=0.30\textwidth]{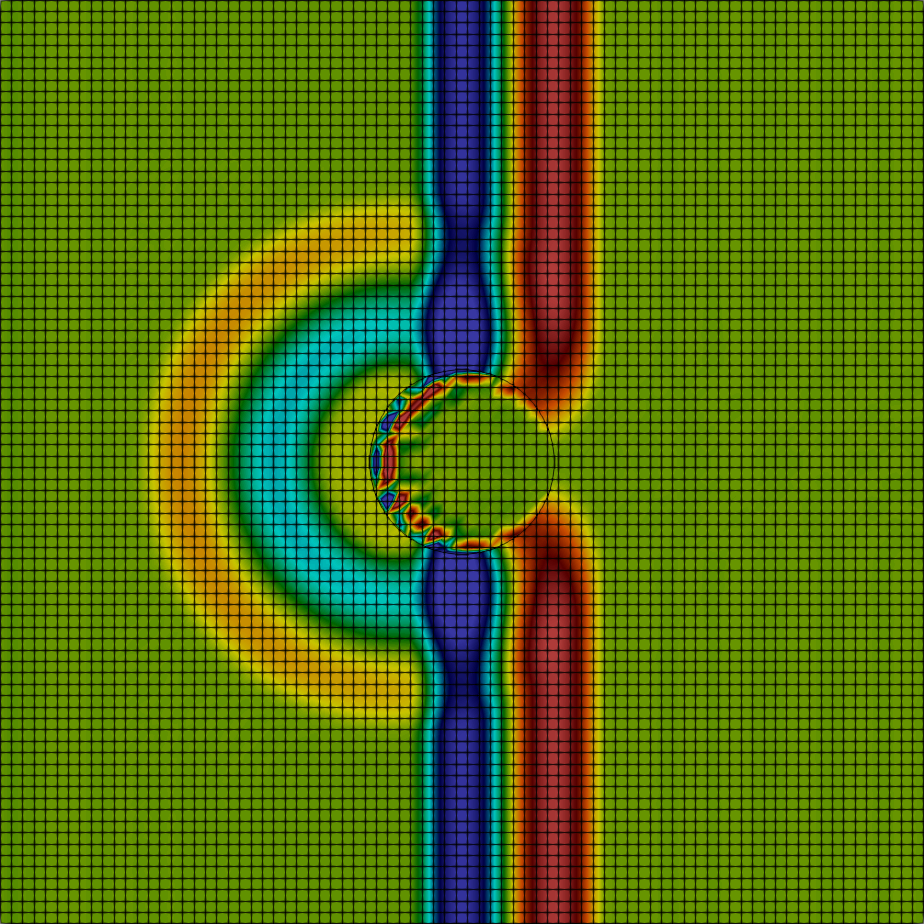} &
\includegraphics[width=0.30\textwidth]{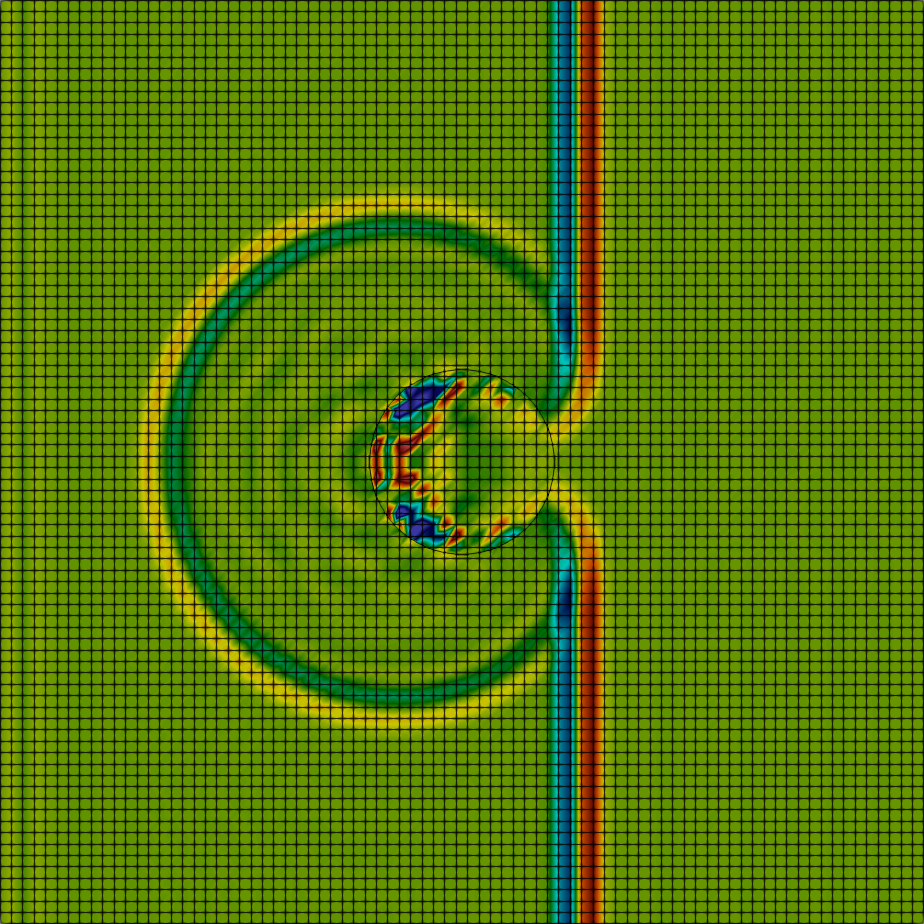} \\
$ t = 6 \Delta t$ &
$ t = 8 \Delta t$ &
$ t = 10 \Delta t$  \\[1.0em]
\includegraphics[width=0.30\textwidth]{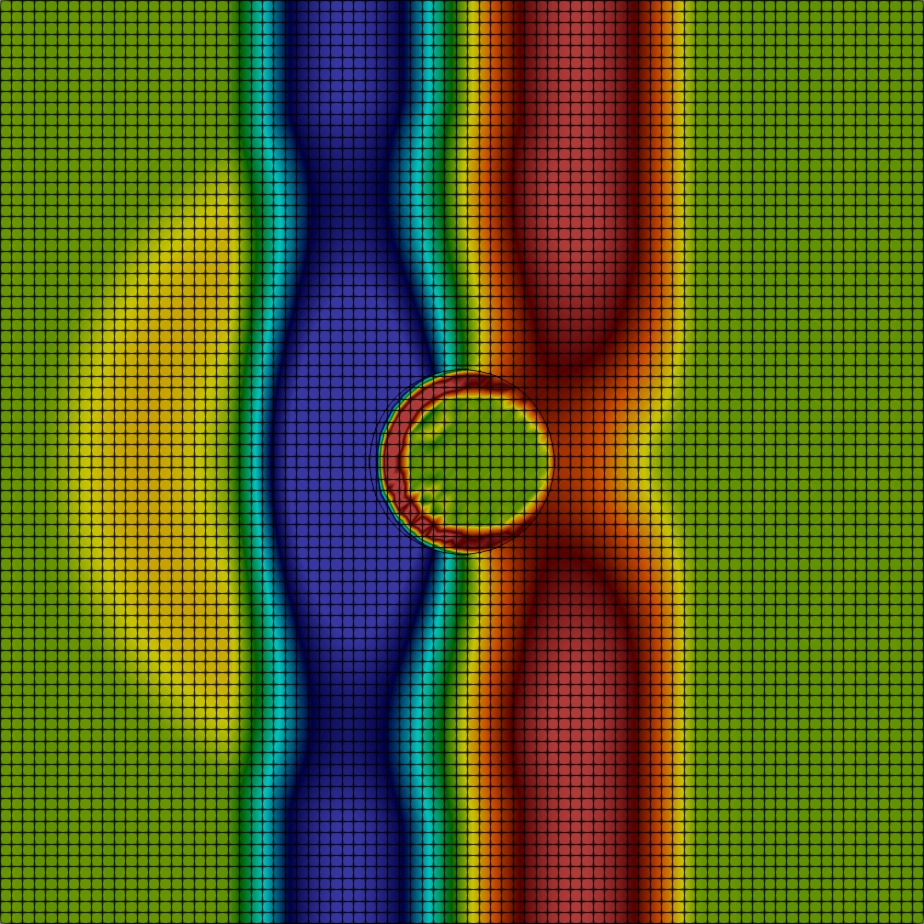} &
\includegraphics[width=0.30\textwidth]{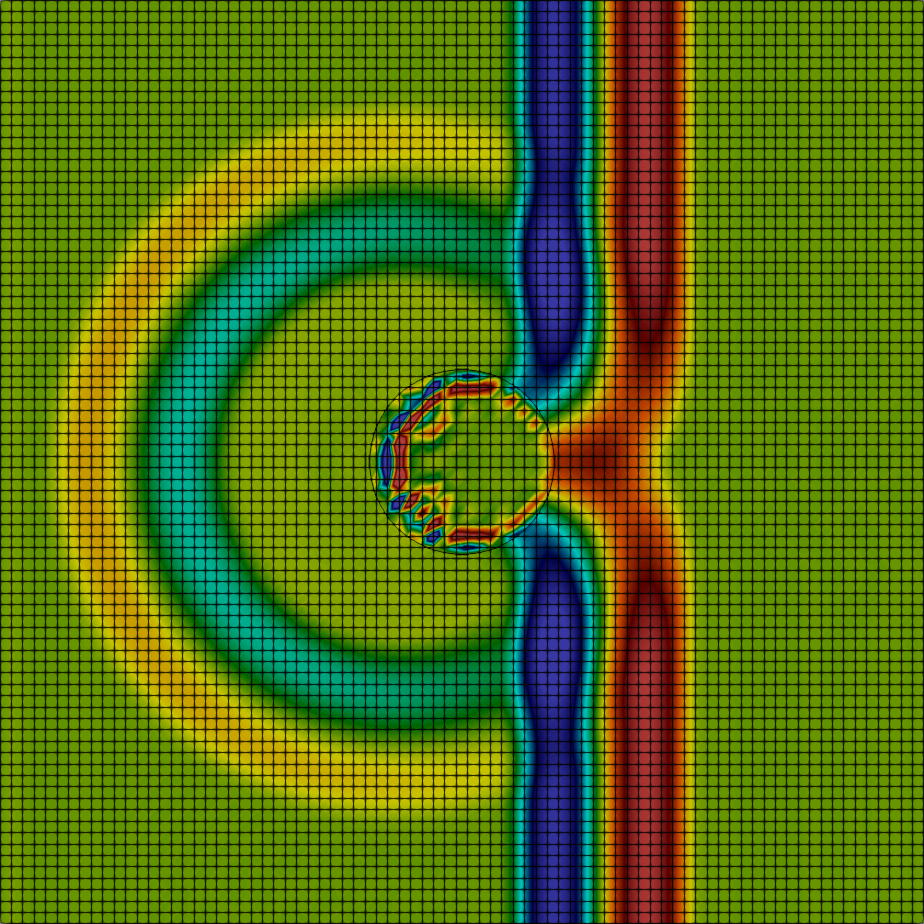} &
\includegraphics[width=0.30\textwidth]{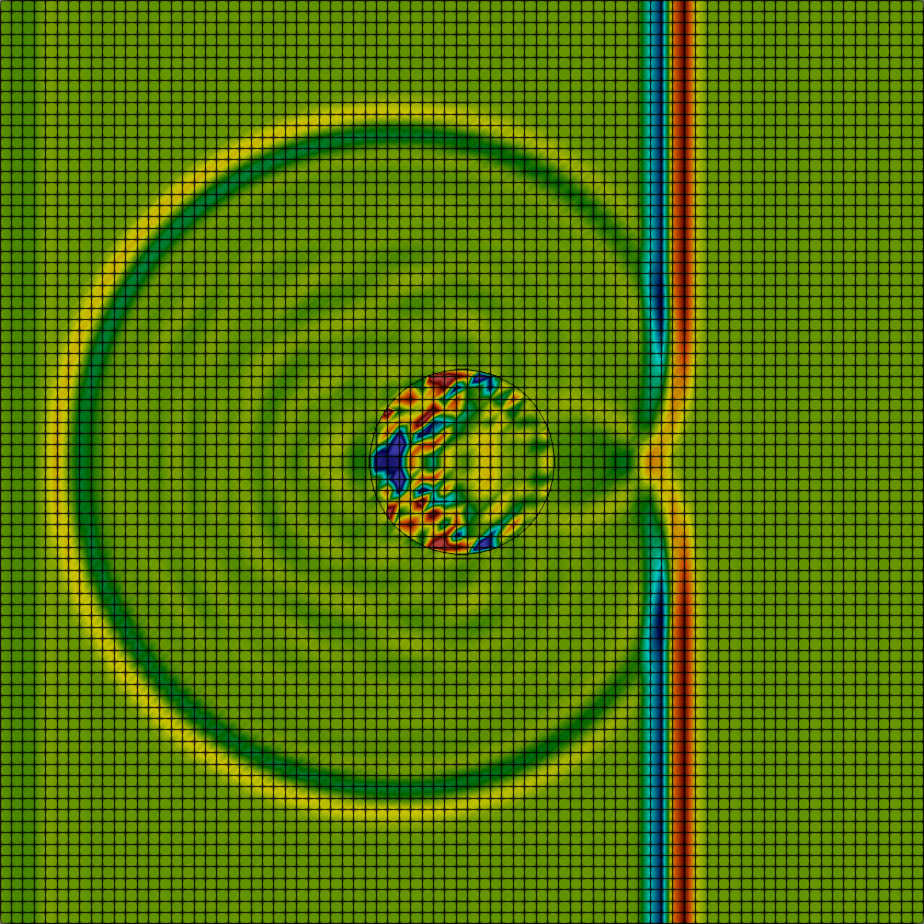}  \\
$ t = 6 \Delta t$ &
$ t = 8 \Delta t$ &
$ t = 10 \Delta t$
\end{tabular}
\caption{\toCheck{Computed solution $p_h(\textbf{x},t)$ at three different time
instants.
Left: $\lambda=\frac{1}{2}$; center: $\lambda=\frac{1}{5}$ and right: $\lambda=\frac{1}{20}$.}}
\label{fig:ex3_sol}
\end{figure}
The outcomes are, as expected, very different from each other. In \textit{case
(i)}, when the wave encounters $\gamma$ we see the two phenomena of backward
reflection of the wave and refraction inside $\gamma$. The latter is of small entity and the interior of $\gamma$ remains mostly unperturbed \toCheck{when $\lambda = \frac{1}{2}$}. This latter
phenomena is exacerbated by \toCheck{decreasing} the value of $\lambda$, indeed for
\textit{case (ii)} we notice a \toCheck{more pronounced refraction effect} inside $\gamma$ that
becomes even more \toCheck{evident} for \textit{case (iii)}.
This phenomena are expected and confirm the quality of the obtained solution.
No spurious oscillations \as{due to the geometry} (at least macroscopic ones) can be noticed in the reported plots.

We can conclude that, for this example, the proposed method is
an attractive approach to solve the wave equation with high order approximation
in presence of curved interfaces without the need of refining the computational
\fd{grid nearby them.}

\subsection{Wave propagation with a realistic curved geometry}\label{subsec:numerical_example_listric_fault}

In this last test case, we consider a curved geometry that might represent a
realistic case of a listric fault \as{cutting a sequence of sedimentary layers}. However, \as{for the sake of simplicity}, its
dimensions are  set as $\Omega = (-1, 1) \times (-0.5, 0.5)$, see Figure~\ref{fig:ex4_geo}.
\begin{figure}[tbp]
    \centering
    \resizebox{0.45\textwidth}{!}{\fontsize{10cm}{10cm}\selectfont
\begingroup%
  \makeatletter%
  \providecommand\color[2][]{%
    \errmessage{(Inkscape) Color is used for the text in Inkscape, but the package 'color.sty' is not loaded}%
    \renewcommand\color[2][]{}%
  }%
  \providecommand\transparent[1]{%
    \errmessage{(Inkscape) Transparency is used (non-zero) for the text in Inkscape, but the package 'transparent.sty' is not loaded}%
    \renewcommand\transparent[1]{}%
  }%
  \providecommand\rotatebox[2]{#2}%
  \newcommand*\fsize{\dimexpr\f@size pt\relax}%
  \newcommand*\lineheight[1]{\fontsize{\fsize}{#1\fsize}\selectfont}%
  \ifx\svgwidth\undefined%
    \setlength{\unitlength}{483.65363174bp}%
    \ifx\svgscale\undefined%
      \relax%
    \else%
      \setlength{\unitlength}{\unitlength * \real{\svgscale}}%
    \fi%
  \else%
    \setlength{\unitlength}{\svgwidth}%
  \fi%
  \global\let\svgwidth\undefined%
  \global\let\svgscale\undefined%
  \makeatother%
  \begin{picture}(1,0.53809576)%
    \lineheight{1}%
    \setlength\tabcolsep{0pt}%
    \put(0,0){\includegraphics[width=\unitlength,page=1]{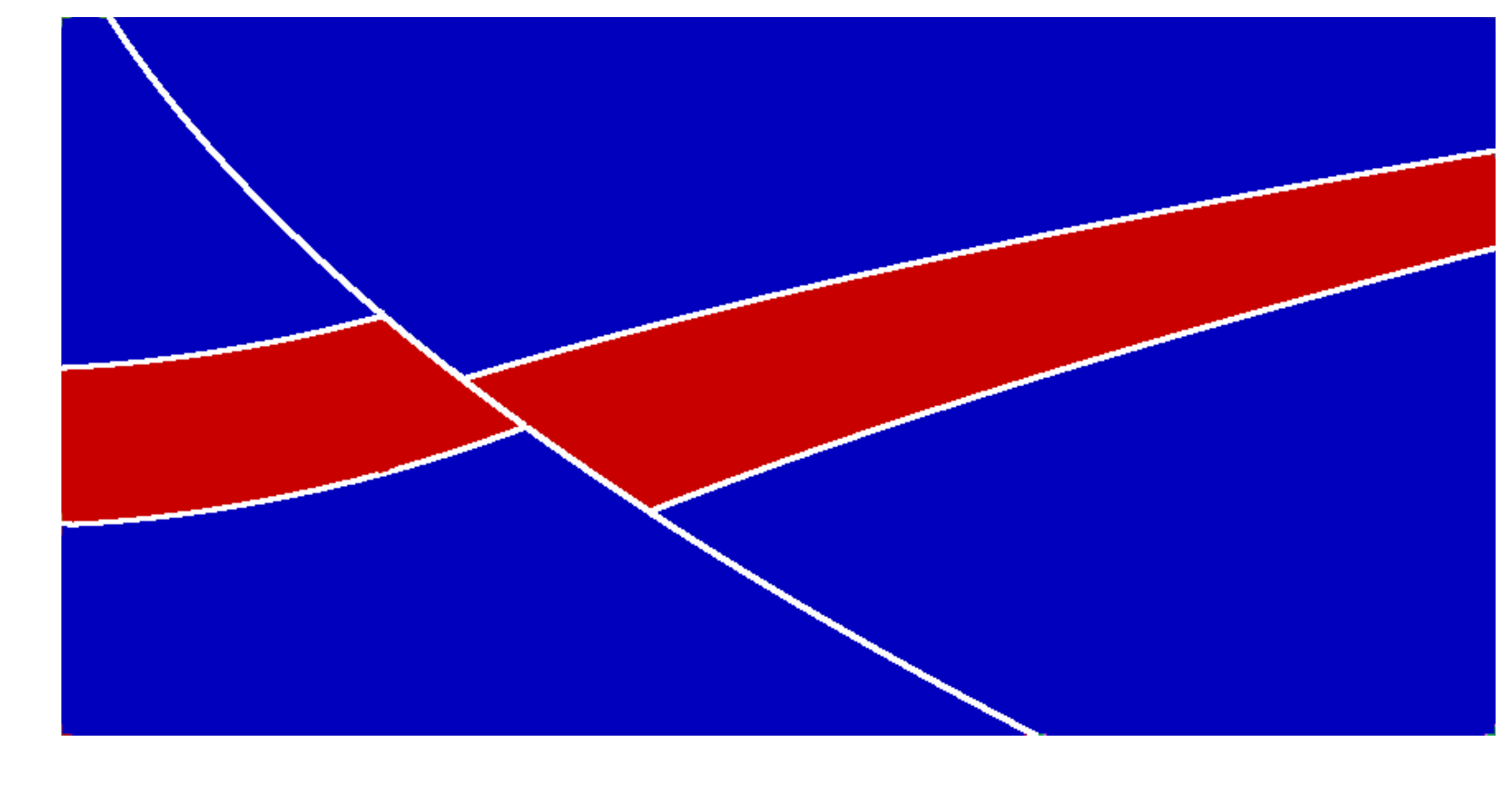}}%
    \put(0.08463671,0.24509962){\color[rgb]{0,0,0.06666667}\makebox(0,0)[lt]{\lineheight{1.25}\smash{\begin{tabular}[t]{l}$\Omega_{\rm mid}$\end{tabular}}}}%
    \put(0.60232342,0.32018722){\color[rgb]{0,0,0.06666667}\makebox(0,0)[lt]{\lineheight{1.25}\smash{\begin{tabular}[t]{l}$\Omega_{\rm mid}$\end{tabular}}}}%
    \put(0.42348025,-0.03233764){\color[rgb]{0,0,0}\makebox(0,0)[lt]{\lineheight{1.25}\smash{\begin{tabular}[t]{l}$x$\end{tabular}}}}%
    \put(-0.04725406,0.44339395){\color[rgb]{0,0,0}\makebox(0,0)[lt]{\lineheight{1.25}\smash{\begin{tabular}[t]{l}$y$\end{tabular}}}}%
    \put(0,0){\includegraphics[width=\unitlength,page=2]{listric.pdf}}%
  \end{picture}%
\endgroup%
}
    \caption{Computational domain for the example in section
    \ref{subsec:numerical_example_listric_fault}.}
    \label{fig:ex4_geo}
\end{figure}
In the middle red areas are defined as $\Omega_{\rm mid}$. 
This central layer has different physical parameters than the surrounding portions of materials, see blue areas in Figure~\ref{fig:ex4_geo}.
Even if realistic applications with multiple layers are more
challenging \as{than this example}, we show a qualitative analysis to understand the \as{potentiality} of
the newly introduced method.  Then, we consider Problem~\eqref{modelpb}, with $\rho=1$ everywhere,
$\mu=0.1$ in $\Omega_{\rm mid}$ and $\mu=1$ everywhere else. On all the
boundaries we impose absorbing conditions, the initial data are null, the final time is $T=10$ and the
source term is given as \toCheck{a Mexican-hat wavelet}
\begin{gather*}
    f(\bm{x}, t) =
    5[1-2\pi^2(t-1.2)^2]e^{-\pi^2(t-1.2)}e^{-\frac{(x_0+0.375)^2+(x_1-0.25)^2}{0.00625}}.
\end{gather*}

The geometry is obtained from a $32 \time 64$ Cartesian grid where the curved elements are thus created by cutting the mesh with the curved interfaces. The resulting computational grid has 2271 elements. We consider a polynomial approximation order equal to $4$ and a time step $\Delta t = 0.001$.  The obtained
numerical solution is depicted in Figure \ref{fig:ex4_sol} for different time
instants.
\begin{figure}[p]
    \centering
    \begin{tabular}{cc}
        \multicolumn{2}{c}{\text{$p_h(\textbf{x},t)$}}\\
        \multicolumn{2}{c}{\includegraphics[width=0.45\textwidth]{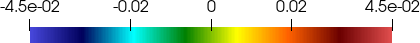}}
        \\[1.0em]
        \includegraphics[width=0.45\textwidth]{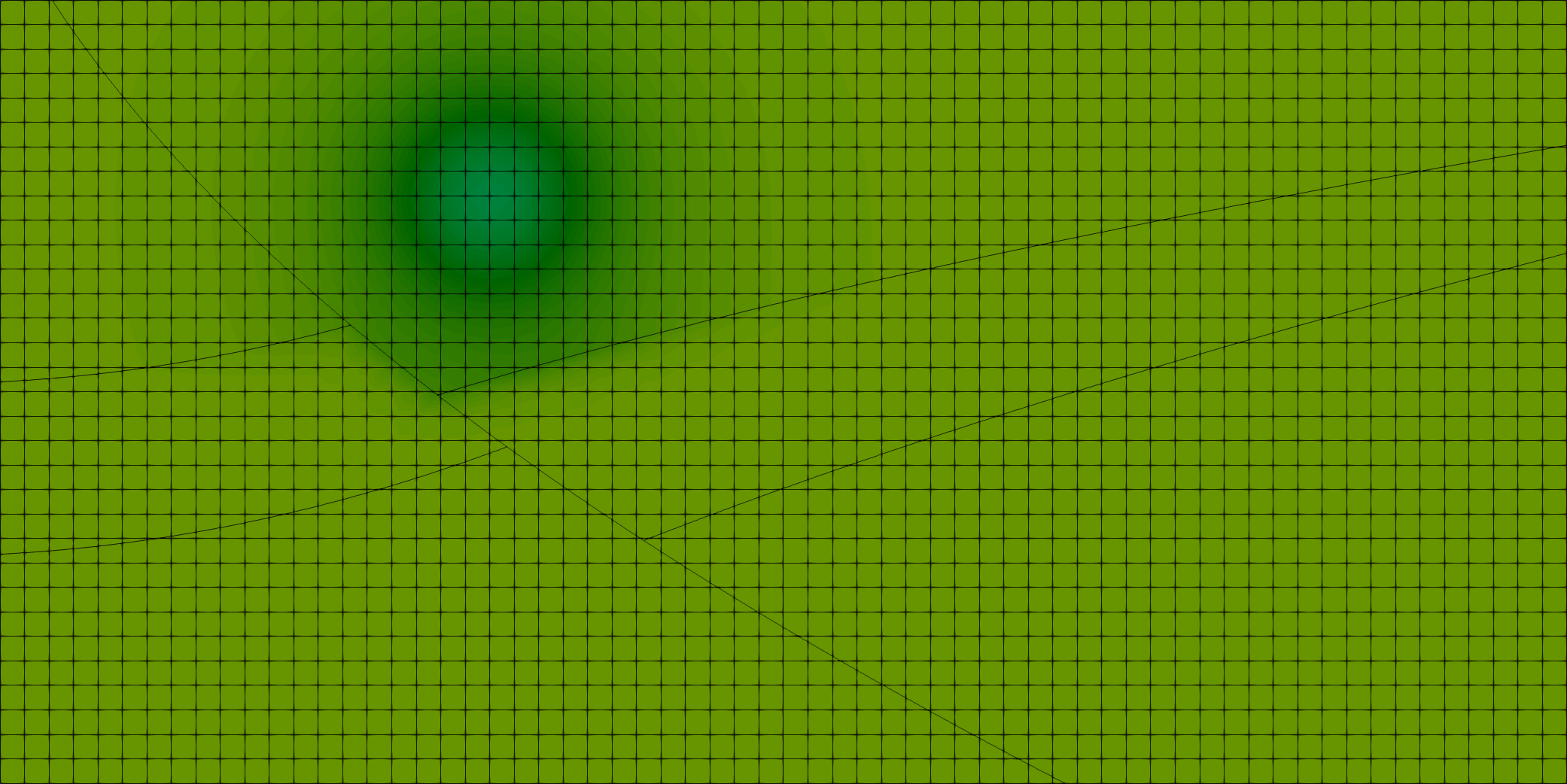}
        &
        \includegraphics[width=0.45\textwidth]{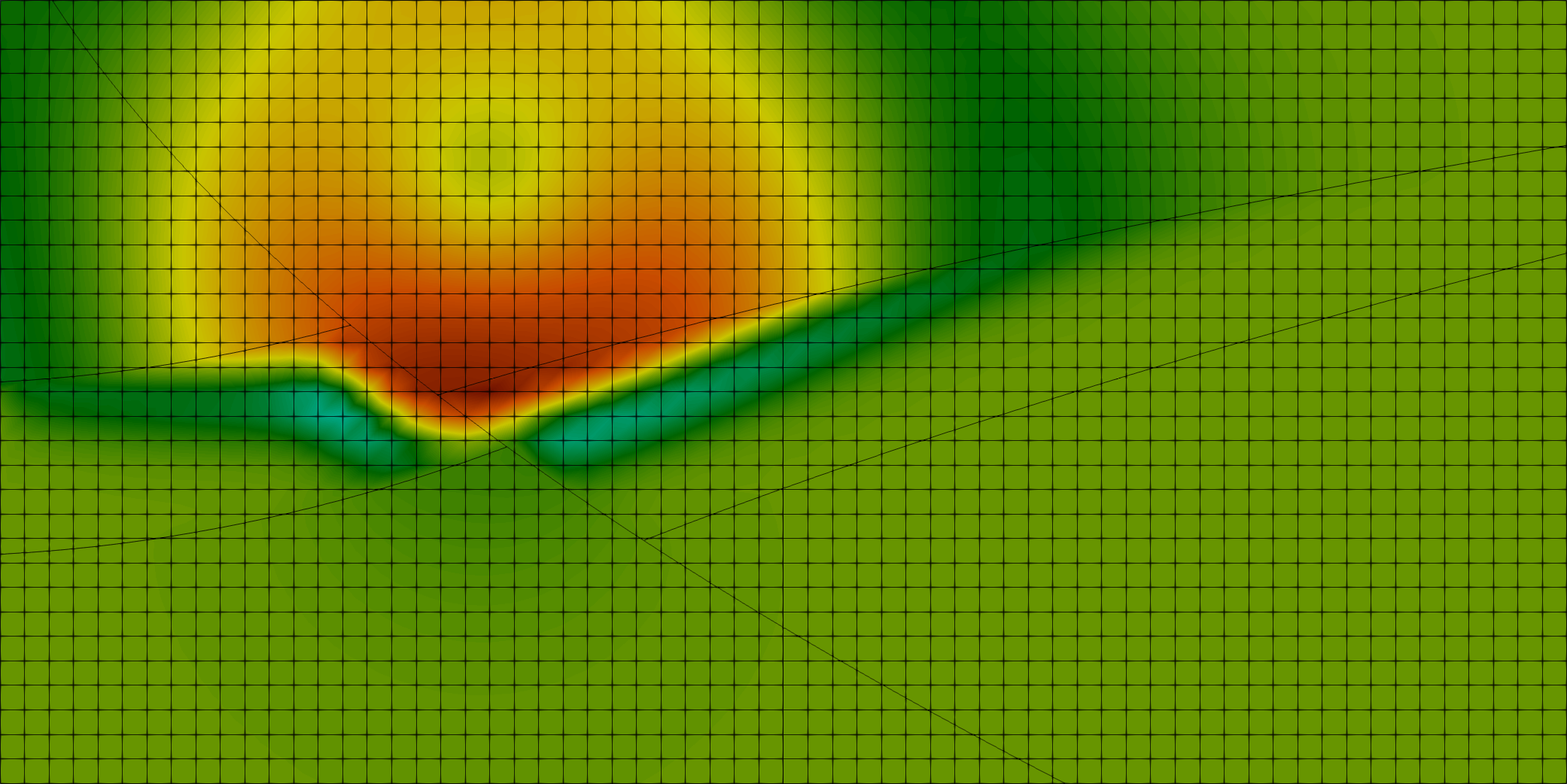}\\
        $ t = \Delta t$ &
        $ t = 5 \Delta t$
        \\[1.0em]
        \includegraphics[width=0.45\textwidth]{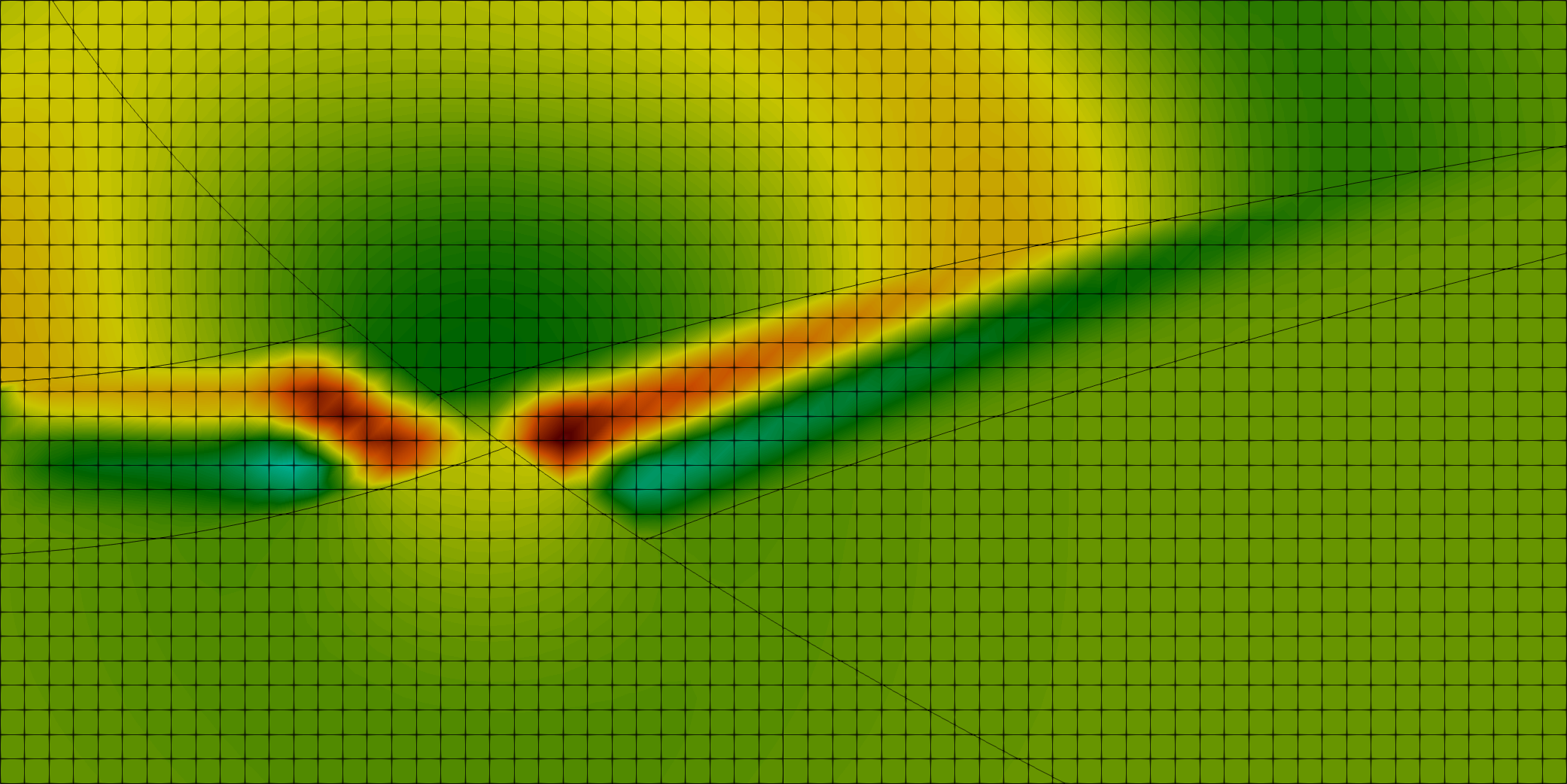}%
        &
        \includegraphics[width=0.45\textwidth]{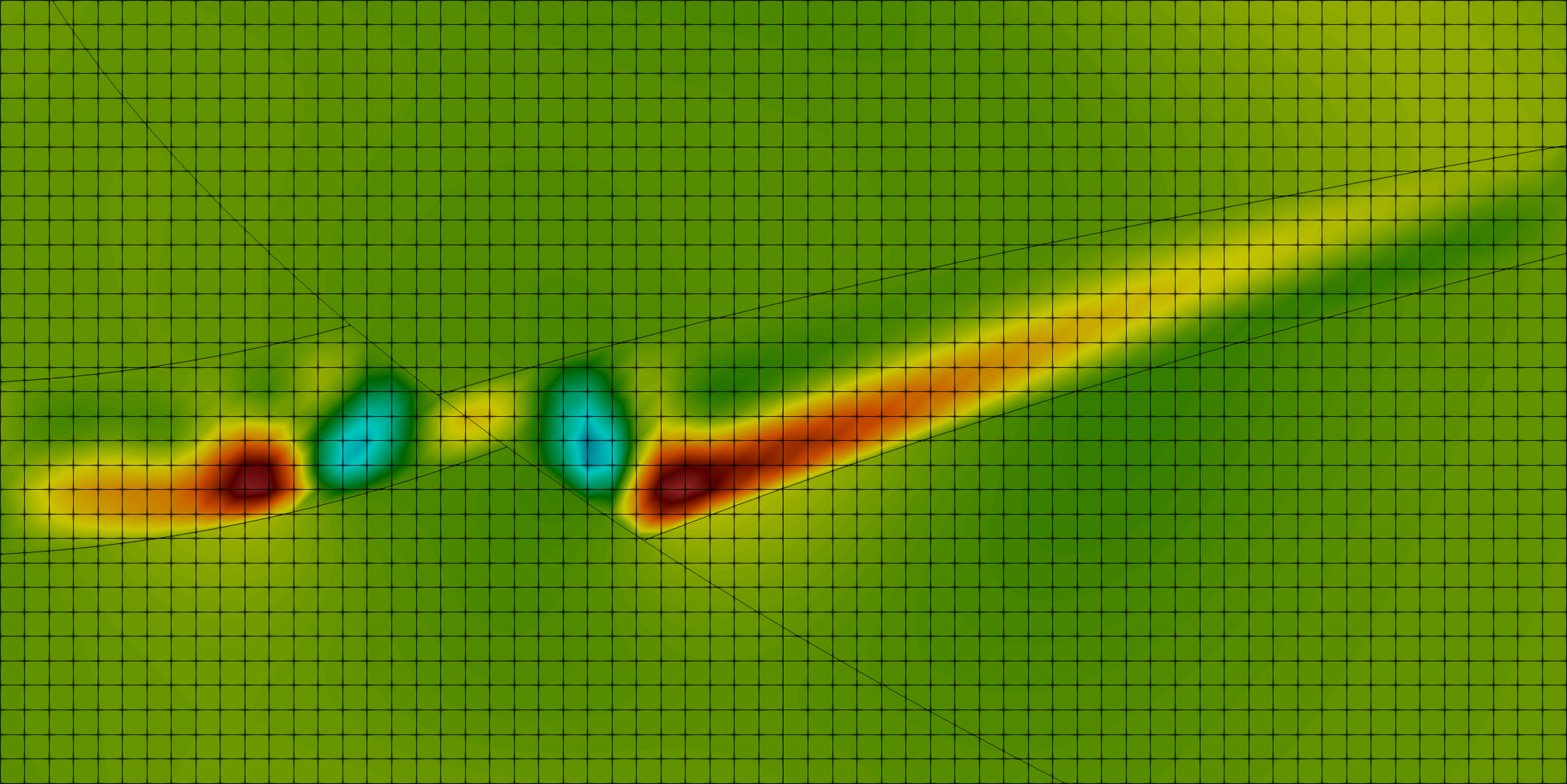}\\
        $t  = 7 \Delta t$ &
        $t  = 10 \Delta t$
        \\[1.0em]
        \includegraphics[width=0.45\textwidth]{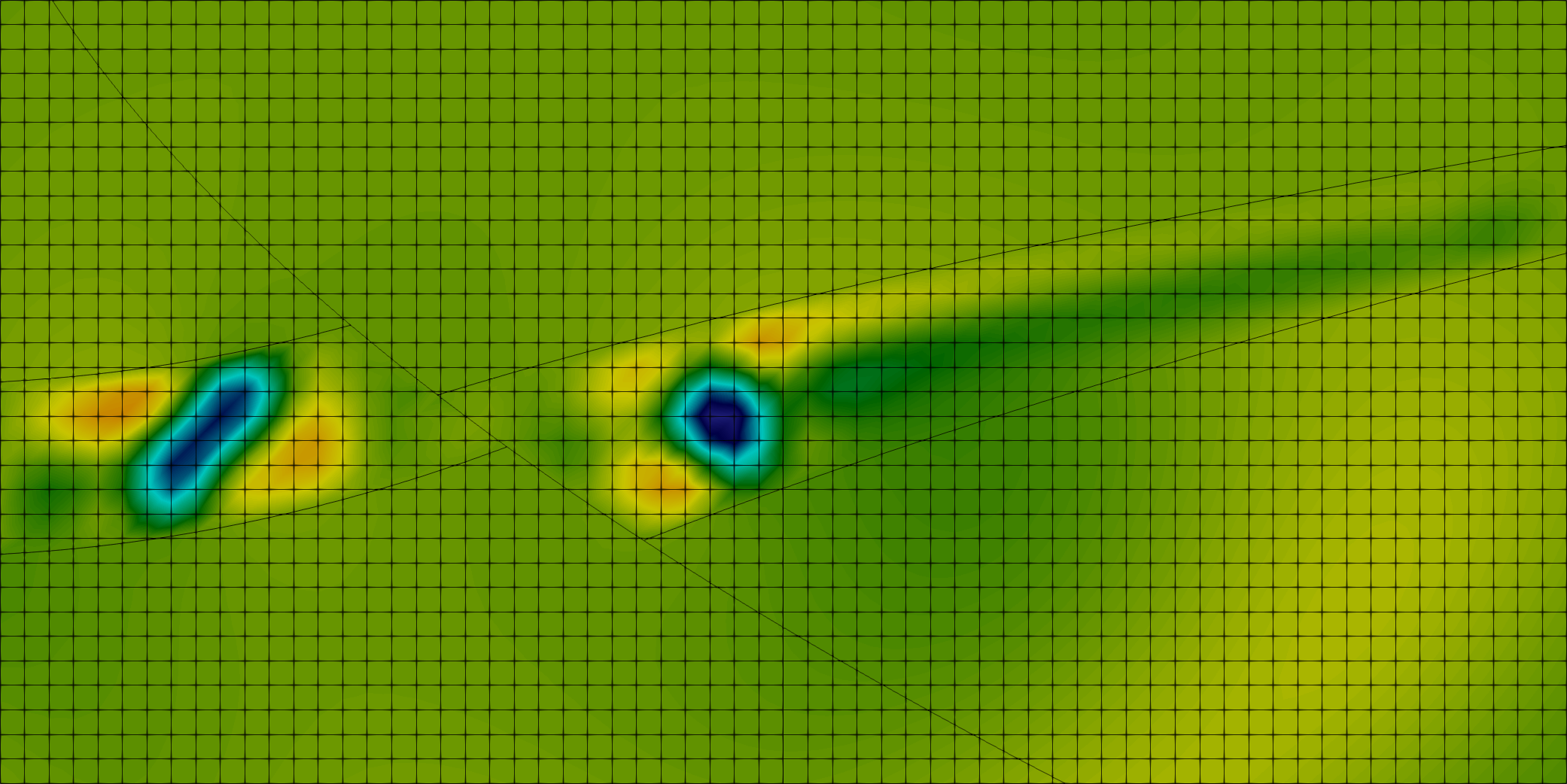}%
        &
        \includegraphics[width=0.45\textwidth]{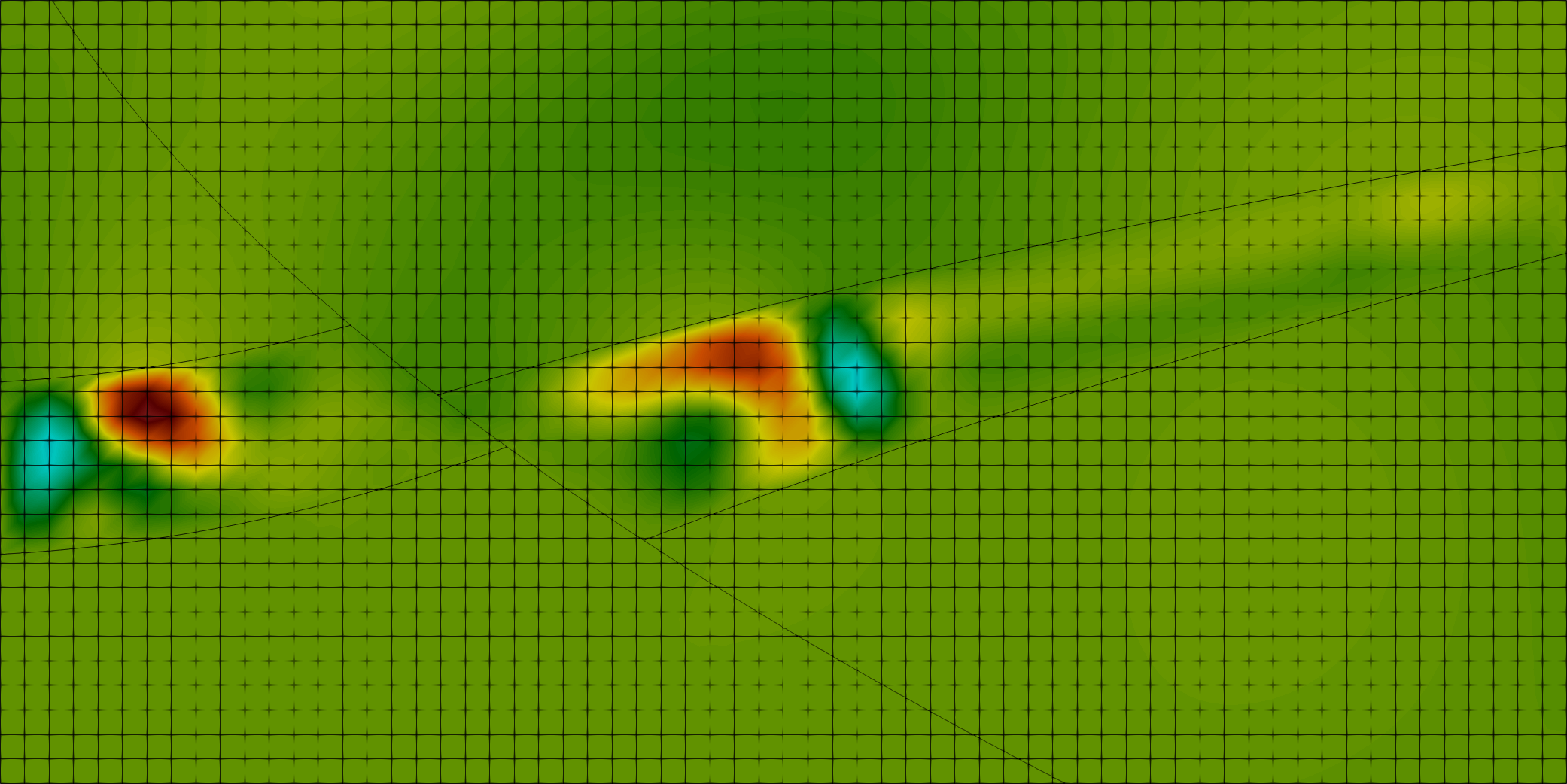}\\
        $t  = 14 \Delta t$ &
        $t  = 18 \Delta t$
        \\[1.0em]
        \includegraphics[width=0.45\textwidth]{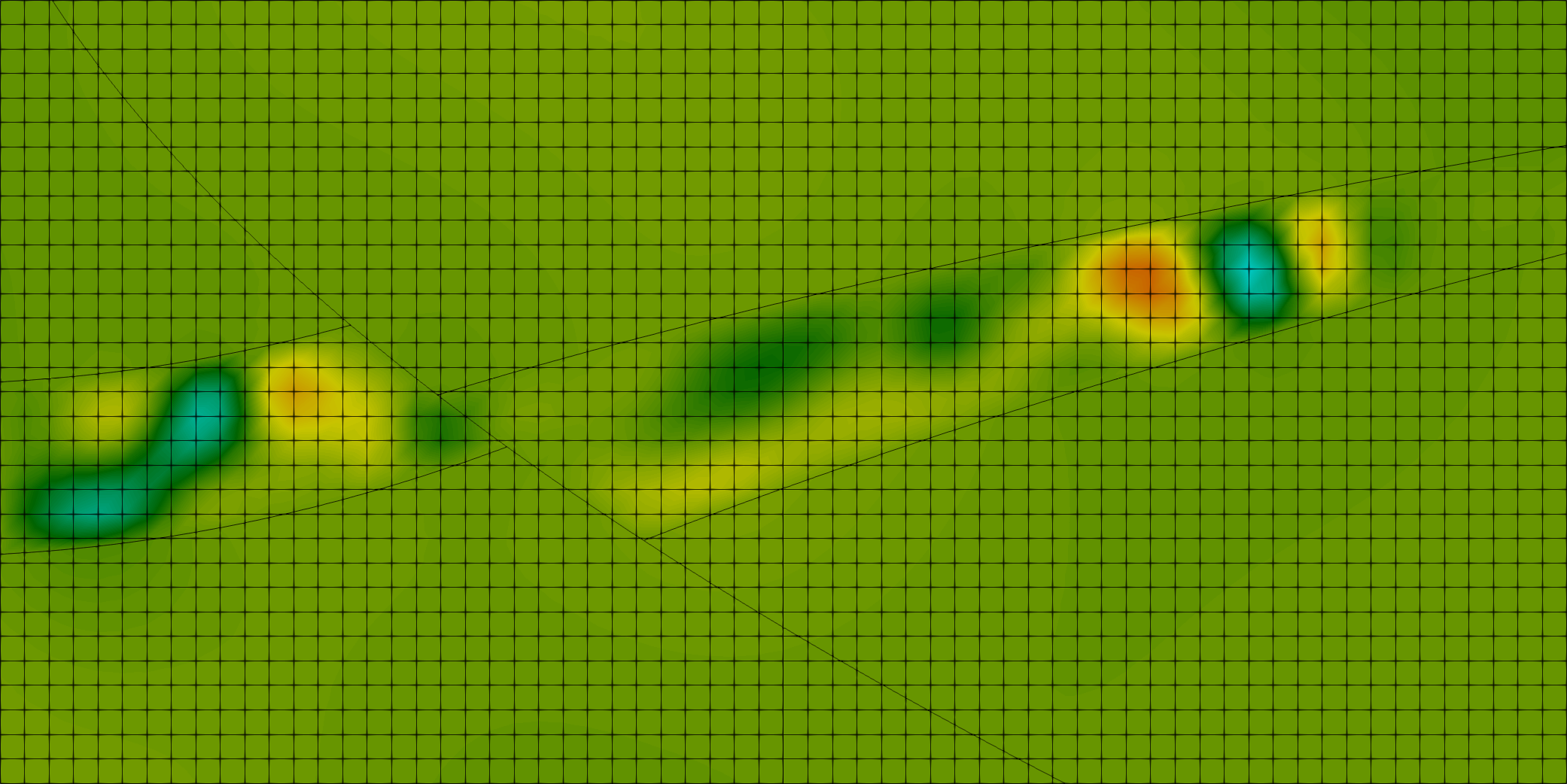}%
        &
        \includegraphics[width=0.45\textwidth]{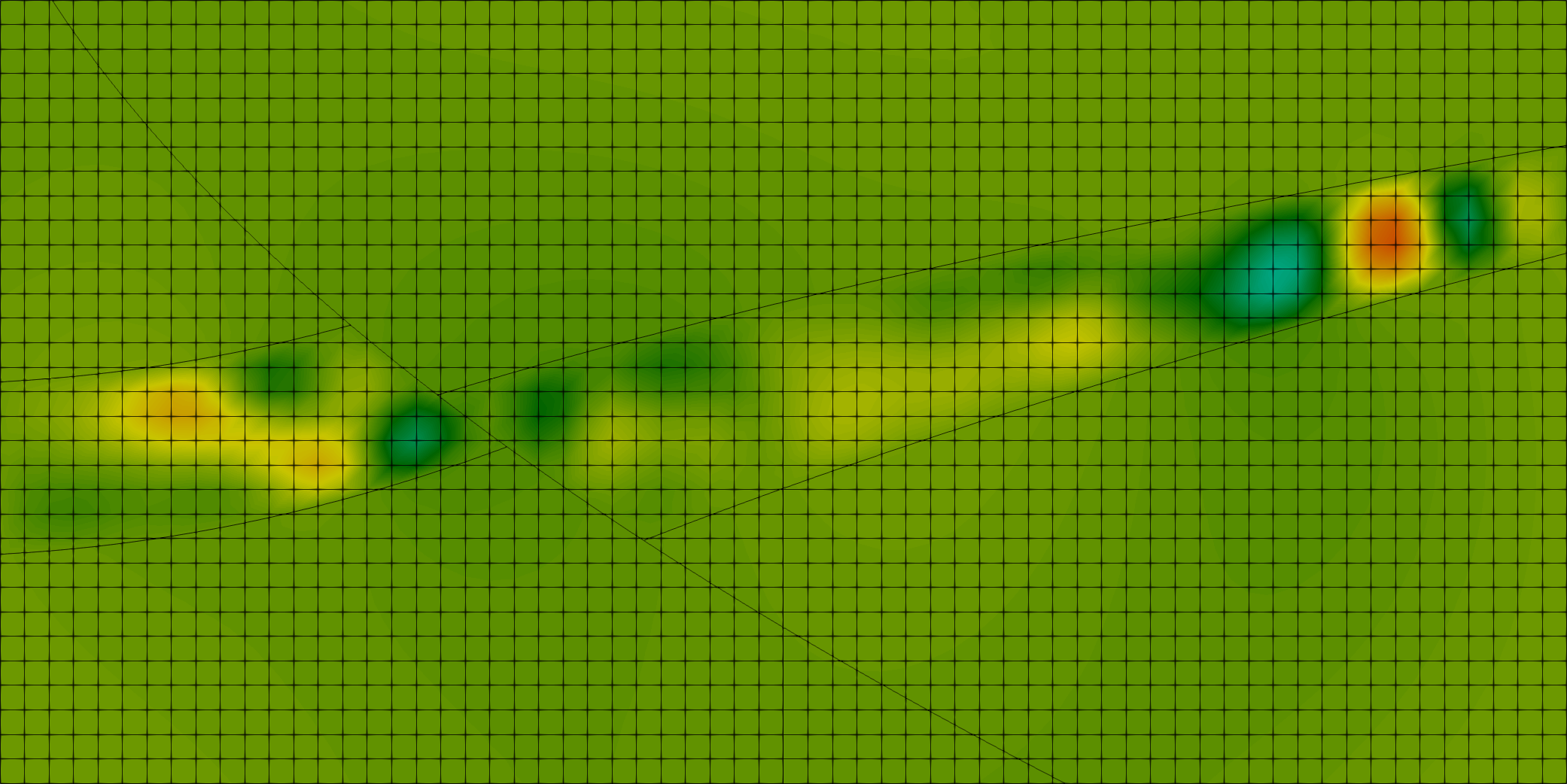}\\
        $t  = 34 \Delta t$ &
        $t  = 42 \Delta t$
    \end{tabular}
    \caption{From left to right and from top to bottom, screenshots of the computed
    solution $p_h(\textbf{x},t)$ at different time instants.}%
    \label{fig:ex4_sol}
\end{figure}

From the obtained solution, we see that no spurious oscillations are generated at the curved interfaces inside the domain. Moreover, since  the characteristic velocity in $\Omega_{\rm mid}$ is smaller than the surrounding parts, the wave remains trapped inside $\Omega_{\rm mid}$.

Also in this final test case, even on a more complex curved geometry, the proposed scheme performs well, without any need to over-refine
around the curved interfaces to avoid side effects.

\section{Conclusions}\label{Sc:Conclusions}

\toCheck{In this paper, we have extended the Virtual Element Method in primal form for the wave equation
when the computational domain has internal curved interfaces and/or curved boundaries. The latter are represented exactly in order to avoid possible geometrical errors that
might affect the quality of the numerical solution and limit the convergence order.
This preliminary study carried out in a  two-dimensional setting and 
the promising results shown, open the possibility to extend the proposed approach to more complex three dimensional configurations. 
The numerical
examples presented testify that this approach is very effective, since no spurious oscillations due to curved geometries arise. Moreover, it gives  the possibility to handle the geometrical challenges arising in
realistic applications.}

\section*{Aknowledgements}
The authors are members of the INdAM Research group GNCS and this work is partially funded by INdAM-GNCS through the project ``Bend VEM 3d''.

\bibliographystyle{abbrv}
\bibliography{biblio}

\end{document}